\documentclass[11pt]{article}

\usepackage[T1]{fontenc}
\usepackage[margin=1in]{geometry}
\usepackage{amsmath,amssymb,amsfonts,amsthm,mathtools}
\usepackage{graphicx}
\usepackage{booktabs,tabularx}
\usepackage{enumitem}
\usepackage{algorithm}
\usepackage{algpseudocode}
\usepackage{natbib}
\usepackage{xcolor}
\usepackage{microtype}
\usepackage{placeins}
\usepackage{hyperref}

\graphicspath{{deep_dro_revised_assets/}}

\hypersetup{
    colorlinks=true,
    linkcolor=blue,
    citecolor=blue,
    urlcolor=blue
}

\newtheorem{theorem}{Theorem}[section]
\newtheorem{proposition}[theorem]{Proposition}
\newtheorem{lemma}[theorem]{Lemma}
\newtheorem{corollary}[theorem]{Corollary}
\newtheorem{assumption}[theorem]{Assumption}
\newtheorem{definition}[theorem]{Definition}
\newtheorem{remark}[theorem]{Remark}

\newcommand{\R}{\mathbb{R}}

\newcommand{\E}{\mathbb{E}}
\newcommand{\Prob}{\mathbb{P}}
\newcommand{\cP}{\mathcal{P}}
\newcommand{\cM}{\mathcal{M}}

\newcommand{\XiSet}{\Xi}
\newcommand{\1}{\mathbf{1}}
\newcommand{\dW}{d_{\mathrm W}}
\newcommand{\Phat}{\widehat P_N}
\newcommand{\Ptil}{\widetilde P_N}
\newcommand{\Pbar}{\bar P_N}
\newcommand{\epshat}{\widehat\varepsilon_N}
\newcommand{\epstilde}{\widetilde\varepsilon_N}
\newcommand{\epsmc}{\varepsilon_N^{\mathrm{mc}}}
\newcommand{\remp}{r_N^{\mathrm{emp}}}
\newcommand{\rpred}{r_N^{\mathrm{pred}}}
\newcommand{\rcert}{r_N^{\mathrm{cert}}}
\newcommand{\rdist}{r_N^{\mathrm{dist}}}
\newcommand{\inner}[2]{\left\langle #1,#2\right\rangle}
\newcommand{\norm}[1]{\left\lVert #1\right\rVert}
\DeclareMathOperator*{\argmin}{arg\,min}

\DeclareMathOperator{\Cl}{cl}

\title{Learning the Center and Radius of Wasserstein Ambiguity Sets for Data-Driven Decision Making}
\author{Junjie Guo}
\date{}

\begin{document}
\maketitle

\begin{abstract}
Wasserstein distributionally robust optimization (DRO) is commonly built around the empirical distribution, with the ambiguity radius selected from a concentration bound. Although this construction provides useful statistical guarantees, it can be conservative and does not fully exploit predictive information about the underlying distribution or the difficulty of a particular decision problem. We develop a more flexible framework in which a predictive model determines the nominal distribution and a separate model estimates a data-dependent radius. The key requirement is not that the ambiguity set be centered at the empirical distribution, but that it contain the unknown data-generating distribution with the desired probability. We establish finite-sample guarantees and asymptotic consistency for arbitrary learned centers, derive tractable reformulations for non-uniform discrete predictive distributions, separate predictive-model and scenario-discretization errors, and prove stability under simultaneous perturbations of the center and radius. We further characterize the oracle conditional-quantile radius as the smallest conditionally valid rule and introduce a split-conformal procedure for finite-sample marginal calibration. Experiments on newsvendor problems, synthetic portfolios, distribution shifts, and real financial data show that learned and calibrated ambiguity sets can improve reliability, but do not automatically yield smaller radii or better decisions. Overall, the proposed framework treats calibration as a practical mechanism for reliable decision making rather than a universal guarantee of improved optimization performance.
\end{abstract}

\noindent\textbf{Keywords:} distributionally robust optimization; Wasserstein distance; predictive distribution; learned radius; conformal calibration; finite-sample guarantee; portfolio optimization.

\section{Introduction}

Consider the stochastic optimization problem
\begin{equation}\label{eq:true-sp}
    J^\star = \inf_{x\in X} \E_P[h(x,\xi)],
\end{equation}
where $X\subseteq\R^n$ is a decision set, $\xi\in\XiSet\subseteq\R^m$ is a random vector, $P$ is its unknown data-generating distribution, and $h:X\times\XiSet\to\R$ is a loss. In practice $P$ is not directly observable. A common data-driven approach replaces $P$ by the empirical distribution
\begin{equation}\label{eq:empirical}
    \Phat = \frac1N\sum_{i=1}^N \delta_{\xi_i},
\end{equation}
where $\xi_1,\ldots,\xi_N\overset{\mathrm{i.i.d.}}{\sim}P$. This yields the sample-average approximation (SAA)
\begin{equation}\label{eq:saa}
    \widehat J_N^{\mathrm{saa}} = \inf_{x\in X} \frac1N\sum_{i=1}^N h(x,\xi_i).
\end{equation}
SAA is asymptotically consistent under standard regularity conditions \citep{shapiro2009lectures}, but it can overfit the empirical sample and display poor out-of-sample performance for small or moderate $N$.

Wasserstein DRO addresses this issue by optimizing against all distributions near a nominal distribution. In the foundational work of \citet{esfahani2018data}, the nominal distribution is the empirical law \eqref{eq:empirical}, and the ambiguity set is
\begin{equation}\label{eq:empirical-ball}
    B_{\varepsilon}(\Phat)=\{Q\in\cM(\XiSet):\dW(Q,\Phat)\le \varepsilon\}.
\end{equation}
The corresponding robust certificate is
\begin{equation}\label{eq:empirical-dro}
    \widehat J_N(\varepsilon)=\inf_{x\in X}\sup_{Q\in B_{\varepsilon}(\Phat)} \E_Q[h(x,\xi)].
\end{equation}
If $\varepsilon$ is chosen so that $P\in B_\varepsilon(\Phat)$ with probability at least $1-\beta$, then $\widehat J_N(\varepsilon)$ is an upper confidence bound on the out-of-sample cost of the robust optimizer. Under light-tail assumptions, \citet{esfahani2018data} use measure concentration to construct such an a priori radius.

This paper studies a modular alternative in which both the center and the radius are data-dependent. A predictive model first outputs a probability law $\widehat G_N$ using historical observations and, when available, side information. For tractable optimization, $\widehat G_N$ may be discretized into
\begin{equation}\label{eq:predictive-dist}
    \Ptil = \sum_{j=1}^{M_N}p_{j,N}\delta_{\zeta_{j,N}},\qquad p_{j,N}\ge0,\quad \sum_{j=1}^{M_N}p_{j,N}=1.
\end{equation}
A second model then predicts a radius
\begin{equation}\label{eq:learned-radius-intro}
    \epshat = g_\theta(Z_N),
\end{equation}
where $Z_N$ may contain sample size, dimension, predictive diagnostics, volatility or regime features, and uncertainty estimates. The key requirement is
\begin{equation}\label{eq:coverage-intro}
    P^N\left\{\dW(P,\Pbar)\le \epshat\right\}\ge1-\beta,
\end{equation}
where $\Pbar$ denotes the actual nominal law used by the optimizer. Thus, the empirical law is not logically required for the standard certificate argument; what is required is calibrated coverage of the unknown law by the random ambiguity set.

The statement above is intentionally modest: it is a coverage principle, not by itself a complete learning theory. The substantive questions are how predictive-model error, finite scenario generation, radius estimation, and calibration combine; whether the resulting robust value is stable; and whether a learned radius is efficient among valid rules. These are the points addressed by the new quantitative results in this paper.

\paragraph{Contributions.} The paper makes the following contributions.
\begin{enumerate}[leftmargin=2em]
    \item It gives a unified formulation for arbitrary random nominal distributions and records the standard strong dual and finite convex reformulations for a non-uniform discrete center. This extension is technically straightforward but is needed for predictive scenario distributions.
    \item It proves finite-sample validity and asymptotic consistency for learned centers, together with exact and approximate radius dominance and Lipschitz regret and certificate bounds.
    \item It separates predictive-model error from scenario-discretization error and shows how the two errors add in the radius. It also establishes a joint stability bound for the robust value under simultaneous perturbations of the center and radius.
    \item It studies learned random radii. The oracle conditional quantile is shown to be the pointwise smallest conditionally valid radius rule, approximate radius predictors are corrected by safety margins, and a finite-sample-correct split-conformal procedure is derived.
    \item It distinguishes universal distributional coverage from task-specific certificate coverage and provides both high-probability and expected-regret guarantees when radius undercoverage is nonzero.
    \item It develops a mean--CVaR portfolio reformulation and implements a leakage-free evaluation with nested fitting, calibration, and testing; paired inference; distribution-shift stress tests; and transaction costs. The experiments find reliable calibration in controlled settings but no uniform radius-efficiency or decision-value advantage.
\end{enumerate}

\section{Related Literature}

DRO replaces a single estimated distribution by an ambiguity set and has been developed under moment, divergence, and transport-based descriptions of distributional uncertainty; representative foundations include \citet{bental2013robust}, \citet{delage2010distributionally}, \citet{wiesemann2014distributionally}, and the review of \citet{rahimian2022frameworks}. Wasserstein ambiguity sets are especially attractive because they retain the geometry of the sample space and often admit strong duality and tractable reformulations \citep{esfahani2018data,gao2023distributionally,blanchet2019quantifying,kuhn2019wasserstein}. Their statistical behavior is tied to convergence of empirical measures in Wasserstein distance \citep{fournier2015rate,weed2019sharp}, while recent work gives broader generalization and regularization interpretations \citep{wu2025generalization}.

The ambiguity-set size is critical. Concentration inequalities provide finite-sample radii but can be conservative, while robust Wasserstein profile inference, statistically calibrated empirical optimization, bootstrap procedures, and validation-based rules offer more adaptive alternatives \citep{blanchet2019robust,gotoh2021calibration,bertsimas2022bootstrap}. The present paper does not claim that data-adaptive ambiguity sets are new. Its focus is the joint decomposition of center learning, scenario discretization, and radius calibration, together with stability and efficiency results for the resulting random ball.

The learned-center viewpoint is related to predictive and contextual optimization. Data-driven robust optimization constructs uncertainty sets from statistical tests \citep{bertsimas2018datadriven}; predictive-to-prescriptive methods use covariates to form decisions \citep{bertsimas2020predictive}; and residual-based contextual DRO centers ambiguity sets around regression predictions and studies asymptotic and finite-sample properties \citep{kannan2024residuals}. Our formulation is deliberately model-agnostic: the predictive center may be parametric, nonparametric, or generative, while the radius is treated as a separate calibrated statistical object. In portfolio optimization, optimal-transport robustness has also been combined directly with side information and conditional estimation \citep{nguyen2024robustifying}.

Finally, the calibration component is connected to conformal prediction. Split conformal and conformalized quantile regression provide finite-sample marginal coverage under exchangeability \citep{romano2019conformalized}. Weighted and adaptive variants address distribution drift and sequential data, although their guarantees differ from ordinary exchangeable split conformal \citep{barber2023conformal,gibbs2021adaptive}. This distinction is essential in rolling financial experiments, where naive random splitting would create leakage and invalidate the nominal guarantee.

\section{Notation and Preliminaries}

Let $\cM(\XiSet)$ be the set of probability distributions supported on $\XiSet$ with finite first moment. For $Q_1,Q_2\in\cM(\XiSet)$, the 1-Wasserstein distance induced by a norm $\norm{\cdot}$ is
\begin{equation}\label{eq:wasserstein-primal}
    \dW(Q_1,Q_2)=\inf_{\Pi\in\Gamma(Q_1,Q_2)}\int_{\XiSet\times\XiSet}\norm{\xi-\xi'}\,\Pi(d\xi,d\xi'),
\end{equation}
where $\Gamma(Q_1,Q_2)$ is the set of couplings with marginals $Q_1$ and $Q_2$. Its Kantorovich--Rubinstein dual representation \citep{kantorovich1958space,villani2009optimal} is
\begin{equation}\label{eq:KR}
    \dW(Q_1,Q_2)=\sup_{f\in\mathrm{Lip}_1}\left\{\E_{Q_1}[f(\xi)]-\E_{Q_2}[f(\xi)]\right\},
\end{equation}
where $\mathrm{Lip}_1$ is the class of functions satisfying $|f(\xi)-f(\xi')|\le\norm{\xi-\xi'}$.

The dual norm is $\norm{z}_*=\sup_{\norm{\xi}\le1}\inner{z}{\xi}$. For a set $\XiSet$, its characteristic function is $\chi_{\XiSet}(\xi)=0$ if $\xi\in\XiSet$ and $+\infty$ otherwise. Its support function is
\begin{equation}
    \sigma_{\XiSet}(z)=\sup_{\xi\in\XiSet}\inner{z}{\xi}.
\end{equation}
For a proper function $f$, its convex conjugate is
\begin{equation}
    f^*(z)=\sup_{\xi}\{\inner{z}{\xi}-f(\xi)\}.
\end{equation}

Throughout, randomness of $\Phat$, $\Ptil$, $\Pbar$, $\epshat$, and all data-driven decisions is with respect to the product law $P^N$ unless stated otherwise. If the predictive model or scenario generator uses additional independent randomization, the underlying probability space is enlarged to include that randomness, and the same notation is retained for simplicity.

\section{Classical Empirical Wasserstein DRO}

This section recalls the empirical construction of \citet{esfahani2018data}. The empirical Wasserstein DRO problem is \eqref{eq:empirical-dro}. If $\widehat x_N(\varepsilon)$ is an optimizer and $\widehat J_N(\varepsilon)$ is the optimal value, the desired finite-sample guarantee is
\begin{equation}\label{eq:desired-guarantee}
    P^N\left\{\E_P[h(\widehat x_N(\varepsilon),\xi)]\le \widehat J_N(\varepsilon)\right\}\ge 1-\beta.
\end{equation}

\subsection{Measure-concentration radius}

Assume the following light-tail condition.

\begin{assumption}[Light tails]\label{ass:light-tail}
There exists $a>1$ such that
\begin{equation}
    A:=\E_P\left[\exp(\norm{\xi}^a)\right]<\infty.
\end{equation}
\end{assumption}

A measure concentration result due to \citet{fournier2015rate} implies that, for constants $c_1,c_2>0$ depending only on $a,A,m$,
\begin{equation}\label{eq:mc-ineq}
P^N\{\dW(P,\Phat)\ge\varepsilon\}\le
\begin{cases}
    c_1\exp(-c_2N\varepsilon^{\max\{m,2\}}), & \varepsilon\le1,\\
    c_1\exp(-c_2N\varepsilon^a), & \varepsilon>1.
\end{cases}
\end{equation}
Equating the right side to $\beta$ gives the a priori radius
\begin{equation}\label{eq:eps-mc}
\epsmc(\beta)=
\begin{cases}
\left(\dfrac{\log(c_1\beta^{-1})}{c_2N}\right)^{1/\max\{m,2\}}, & N\ge \dfrac{\log(c_1\beta^{-1})}{c_2},\\[1.3em]
\left(\dfrac{\log(c_1\beta^{-1})}{c_2N}\right)^{1/a}, & N< \dfrac{\log(c_1\beta^{-1})}{c_2}.
\end{cases}
\end{equation}
Then
\begin{equation}
    P^N\{P\in B_{\epsmc(\beta)}(\Phat)\}\ge1-\beta.
\end{equation}

\begin{theorem}[Empirical finite-sample guarantee]\label{thm:emp-guarantee}
Let $\widehat J_N=\widehat J_N(\epsmc(\beta))$ and let $\widehat x_N$ be an optimizer of \eqref{eq:empirical-dro} with radius $\epsmc(\beta)$. Under Assumption \ref{ass:light-tail},
\begin{equation}
    P^N\left\{\E_P[h(\widehat x_N,\xi)]\le \widehat J_N\right\}\ge1-\beta.
\end{equation}
\end{theorem}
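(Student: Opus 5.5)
The plan is to reduce the certificate guarantee to the coverage event supplied by the measure-concentration inequality \eqref{eq:mc-ineq}, and then argue deterministically on that event. Write $\varepsilon=\epsmc(\beta)$ and define the event
\[
E_N=\{\xi_1,\ldots,\xi_N : \dW(P,\Phat)\le \varepsilon\}.
\]
The first step is to show $P^N(E_N)\ge 1-\beta$. I would check the two branches of \eqref{eq:eps-mc} against the two branches of \eqref{eq:mc-ineq}. When $N\ge \log(c_1\beta^{-1})/c_2$, the chosen $\varepsilon$ satisfies $\varepsilon\le 1$, and substituting gives $c_1\exp(-c_2N\varepsilon^{\max\{m,2\}})=\beta$. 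When $N<\log(c_1\beta^{-1})/c_2$, we have $\varepsilon>1$, and substituting gives $c_1\exp(-c_2N\varepsilon^{a})=\beta$. In either case $P^N\{\dW(P,\Phat)\ge\varepsilon\}\le\beta$. Because the complement of $E_N$ is contained in $\{\dW(P,\Phat)\ge\varepsilon\}$, this yields $P^N(E_N)\ge 1-\beta$, which is exactly the displayed coverage statement preceding the theorem.

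The second step is the deterministic argument on $E_N$. Fix a sample in $E_N$. Then $P\in B_\varepsilon(\Phat)$, because $P\in\cM(\XiSet)$: Assumption \ref{ass:light-tail} implies that $P$ has a finite first moment, since $\norm{\xi}\le 1+\exp(\norm{\xi}^a)$. Hence, for the realized optimizer $\widehat x_N$,
\[
\E_P[h(\widehat x_N,\xi)]\le \sup_{Q\in B_\varepsilon(\Phat)}\E_Q[h(\widehat x_N,\xi)]=\widehat J_N ,
\]
where the equality holds because $\widehat x_N$ attains the infimum in \eqref{eq:empirical-dro}. Therefore $E_N\subseteq\{\E_P[h(\widehat x_N,\xi)]\le\widehat J_N\}$, and monotonicity of $P^N$ gives the claim.

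The main obstacle is not conceptual but concerns measurability and the edge cases. The event $\{\E_P[h(\widehat x_N,\xi)]\le\widehat J_N\}$ must be measurable, which requires a measurable selection of the optimizer. I would either assume such a selection or interpret the probability as an inner probability; the inclusion $E_N\subseteq\{\cdot\}$ suffices for a lower bound in either reading. I would also note that the expectation $\E_P[h(\widehat x_N,\xi)]$ must be well defined, possibly equal to $+\infty$. The inequality still holds trivially in that case, because then the supremum is also $+\infty$.
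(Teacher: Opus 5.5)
Your proposal is correct and follows the paper's proof. First, the construction of $\epsmc(\beta)$ gives the coverage event $\{P\in B_{\epsmc(\beta)}(\Phat)\}$ with probability at least $1-\beta$; then, on that event, $\E_P[h(\widehat x_N,\xi)]$ is bounded by the worst-case expectation at $\widehat x_N$, which equals $\widehat J_N$ by optimality. Your branch-by-branch check of \eqref{eq:eps-mc} against \eqref{eq:mc-ineq}, the finite-first-moment argument placing $P$ in $\cM(\XiSet)$, and the measurability remark are sound additions that the paper leaves implicit.
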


\begin{proof}
By the construction of $\epsmc(\beta)$, $P\in B_{\epsmc(\beta)}(\Phat)$ with probability at least $1-\beta$. On this event,
\begin{align*}
    \E_P[h(\widehat x_N,\xi)]
    &\le \sup_{Q\in B_{\epsmc(\beta)}(\Phat)}\E_Q[h(\widehat x_N,\xi)]\\
    &= \widehat J_N,
\end{align*}
where the equality follows from optimality of $\widehat x_N$. This proves the claim.
\end{proof}

\subsection{Limitations of the a priori radius}

The radius \eqref{eq:eps-mc} is universal and safe, but it is not necessarily tight. It depends on $N$, $\beta$, $m$, and tail constants, but not on the realized geometry or difficulty of the observed dataset. In high dimensions, the rate in \eqref{eq:eps-mc} can be pessimistic. This motivates data-adaptive radii, including bootstrap, cross-validation, and the learned-radius approach developed later.

\section{Worst-Case Expectation Duality for a General Discrete Nominal Distribution}\label{sec:dual}

The results of \citet{esfahani2018data} are stated for the empirical nominal law $\Phat$, whose weights are all $1/N$. We need the corresponding formulas for an arbitrary discrete nominal law
\begin{equation}\label{eq:general-nominal}
    \Pbar=\sum_{j=1}^{M}p_j\delta_{\zeta_j},\qquad p_j>0,\quad \sum_{j=1}^M p_j=1.
\end{equation}
This includes $\Phat$ as the special case $M=N$, $p_j=1/N$, $\zeta_j=\xi_j$, and it includes predictive distributions of the form \eqref{eq:predictive-dist}.

Let $\ell:\XiSet\to\R$ be a decision-independent loss. Consider
\begin{equation}\label{eq:wce-general}
    \Phi_{\varepsilon}(\ell;\Pbar)=\sup_{Q\in B_\varepsilon(\Pbar)}\E_Q[\ell(\xi)].
\end{equation}

\begin{theorem}[Dual worst-case expectation]\label{thm:dual-general}
Suppose strong duality holds for the moment problem in \eqref{eq:wce-general}; in particular, this is true under the standard upper-semicontinuity and linear-growth conditions used in Wasserstein DRO. Then
\begin{equation}\label{eq:dual-general}
    \Phi_{\varepsilon}(\ell;\Pbar)
    =\inf_{\lambda\ge0}\left\{\lambda\varepsilon+\sum_{j=1}^Mp_j\sup_{\xi\in\XiSet}\left(\ell(\xi)-\lambda\norm{\xi-\zeta_j}\right)\right\}.
\end{equation}
\end{theorem}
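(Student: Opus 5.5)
The plan is to write the worst-case expectation as an optimization over transport plans and then dualize the single transport-budget constraint. Because $\Pbar$ is supported on finitely many atoms, every $Q\in B_\varepsilon(\Pbar)$ is the first marginal of a coupling $\Pi\in\Gamma(Q,\Pbar)$. Such a coupling disintegrates as $\Pi=\sum_{j=1}^M p_j\,Q_j\otimes\delta_{\zeta_j}$, where the conditional laws $Q_j\in\cM(\XiSet)$ satisfy $Q=\sum_j p_jQ_j$. The transport cost is then $\sum_j p_j\int\norm{\xi-\zeta_j}\,Q_j(d\xi)$. Since the infimum in \eqref{eq:wasserstein-primal} is attained for $W_1$ (or, alternatively, by taking an $\eta$-optimal coupling and letting $\eta\downarrow0$), I would rewrite \eqref{eq:wce-general} as
\begin{equation*}
\Phi_\varepsilon(\ell;\Pbar)=\sup_{Q_1,\dots,Q_M\in\cM(\XiSet)}\Bigl\{\sum_{j=1}^Mp_j\int\ell\,dQ_j:\ \sum_{j=1}^Mp_j\int\norm{\xi-\zeta_j}\,Q_j(d\xi)\le\varepsilon\Bigr\}.
\end{equation*}
This is a generalized moment problem with exactly one inequality constraint. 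The only change from the empirical case of \citet{esfahani2018data} is that the weights $1/N$ are replaced by $p_j$.

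Next I would form the Lagrangian with a multiplier $\lambda\ge0$ for the budget constraint:
\begin{equation*}
L(\lambda;Q_{1:M})=\lambda\varepsilon+\sum_{j=1}^Mp_j\int\bigl(\ell(\xi)-\lambda\norm{\xi-\zeta_j}\bigr)\,Q_j(d\xi).
\end{equation*}
Weak duality, $\Phi_\varepsilon\le\inf_{\lambda\ge0}\sup_{Q_{1:M}}L$, follows immediately. For fixed $\lambda$, the inner supremum decouples across $j$ because the $Q_j$ are chosen independently. Each term $\sup_{Q_j}\int(\ell-\lambda\norm{\cdot-\zeta_j})\,dQ_j$ equals $\sup_{\xi\in\XiSet}(\ell(\xi)-\lambda\norm{\xi-\zeta_j})$. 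The upper bound is trivial. For the lower bound I would use Dirac measures $\delta_\xi$, which lie in $\cM(\XiSet)$, and when the supremum is $+\infty$ the equality holds in the extended sense. This yields exactly the right-hand side of \eqref{eq:dual-general} as the dual value.

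The remaining step, and the only nontrivial one, is the reverse inequality, i.e.\ a zero duality gap. The statement assumes it outright, so the proof can simply invoke the hypothesis. To justify the parenthetical remark, I would argue as follows. The problem is a linear program over the convex cone of tuples of measures, with one linear inequality constraint. When $\varepsilon>0$, the tuple $Q_j=\delta_{\zeta_j}$ (assuming $\zeta_j\in\XiSet$) gives transport cost $0<\varepsilon$, so a Slater point exists. Under upper semicontinuity of $\ell$ and a linear growth bound $\ell(\xi)\le C(1+\norm{\xi})$, the standard conic or Shapiro-type duality theorem for moment problems (as used in \citet{esfahani2018data,gao2023distributionally,blanchet2019quantifying}) then gives equality with dual attainment. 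I would also check the edge cases $\varepsilon=0$ and an unbounded inner supremum separately: if the inner supremum is $+\infty$ for every $\lambda$, the dual is $+\infty$, and growth faster than linear makes the primal $+\infty$ as well.
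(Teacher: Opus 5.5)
Your proposal is correct and follows essentially the same route as the paper's proof. Both disintegrate the coupling along the atoms of $\Pbar$, reduce to a single-budget moment problem over $(Q_j)_{j=1}^M$, dualize that constraint with $\lambda\ge0$ using the assumed strong duality, and evaluate the decoupled inner suprema with Dirac masses; your explicit weak-duality step, coupling-attainment remark and Slater-point sketch add detail the paper leaves implicit without changing the argument.
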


\begin{proof}
Using the definition of the Wasserstein ball, write
\begin{align*}
\Phi_{\varepsilon}(\ell;\Pbar)
=\sup_{Q,\Pi}\ &\int_{\XiSet}\ell(\xi)Q(d\xi)\\
\text{s.t. }&\int_{\XiSet\times\XiSet}\norm{\xi-\xi'}\,\Pi(d\xi,d\xi')\le\varepsilon,\\
&\Pi\text{ has marginals }Q\text{ and }\Pbar.
\end{align*}
Since $\Pbar=\sum_jp_j\delta_{\zeta_j}$ is discrete, every coupling $\Pi$ with second marginal $\Pbar$ can be disintegrated as
\begin{equation}
    \Pi(d\xi,d\xi')=\sum_{j=1}^Mp_jQ_j(d\xi)\delta_{\zeta_j}(d\xi'),
\end{equation}
where $Q_j\in\cM(\XiSet)$ are conditional distributions. The first marginal is $Q=\sum_jp_jQ_j$. Hence
\begin{align*}
\Phi_{\varepsilon}(\ell;\Pbar)
=\sup_{Q_j\in\cM(\XiSet)}\ &\sum_{j=1}^Mp_j\int\ell(\xi)Q_j(d\xi)\\
\text{s.t. }&\sum_{j=1}^Mp_j\int\norm{\xi-\zeta_j}Q_j(d\xi)\le\varepsilon.
\end{align*}
Dualizing the transportation-budget constraint with multiplier $\lambda\ge0$ gives
\begin{align*}
\Phi_{\varepsilon}(\ell;\Pbar)
&=\inf_{\lambda\ge0}\left\{\lambda\varepsilon+
\sup_{Q_j}\sum_{j=1}^Mp_j\int\left(\ell(\xi)-\lambda\norm{\xi-\zeta_j}\right)Q_j(d\xi)\right\}\\
&=\inf_{\lambda\ge0}\left\{\lambda\varepsilon+
\sum_{j=1}^Mp_j\sup_{\xi\in\XiSet}\left(\ell(\xi)-\lambda\norm{\xi-\zeta_j}\right)\right\},
\end{align*}
where the last equality uses the fact that each $Q_j$ may concentrate as a Dirac mass at a maximizer or along a maximizing sequence. This proves \eqref{eq:dual-general}.
\end{proof}

\subsection{Finite convex reformulation}

We now state the finite convex reformulation. It is the non-uniform-weight version of the convex reduction in \citet{esfahani2018data}.

\begin{assumption}[Concave-piece loss]\label{ass:concave-piece}
The uncertainty set $\XiSet\subseteq\R^m$ is closed and convex. The loss admits the representation
\begin{equation}\label{eq:max-concave}
    \ell(\xi)=\max_{k\le K}\ell_k(\xi),
\end{equation}
where $-\ell_k$ is proper, closed, and convex for every $k$.
\end{assumption}

\begin{theorem}[Finite convex reformulation]\label{thm:convex-reform-general}
Under Assumption \ref{ass:concave-piece} and the strong-duality conditions of Theorem \ref{thm:dual-general},
\begin{equation}\label{eq:convex-reform-general}
\Phi_{\varepsilon}(\ell;\Pbar)=
\begin{aligned}[t]
\inf_{\lambda,s_j,z_{jk},\nu_{jk}}\quad
&\lambda\varepsilon+\sum_{j=1}^Mp_js_j\\
\mathrm{s.t.}\quad
&[-\ell_k]^*(z_{jk}-\nu_{jk})+\sigma_{\XiSet}(\nu_{jk})-\inner{z_{jk}}{\zeta_j}\le s_j,\quad \forall j,k,\\
&\norm{z_{jk}}_*\le\lambda,\quad \forall j,k.
\end{aligned}
\end{equation}
\end{theorem}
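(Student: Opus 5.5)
The plan is to start from the dual representation \eqref{eq:dual-general} of Theorem \ref{thm:dual-general} and reduce each inner supremum to a finite convex constraint. Introducing epigraph variables $s_j$, the dual becomes
\begin{equation*}
\Phi_\varepsilon(\ell;\Pbar)=\inf_{\lambda\ge0,\,s_j}\Big\{\lambda\varepsilon+\sum_{j=1}^Mp_js_j:\ \sup_{\xi\in\XiSet}\big(\ell_k(\xi)-\lambda\norm{\xi-\zeta_j}\big)\le s_j\ \ \forall j,k\Big\}.
\end{equation*}
This step uses $p_j>0$, so that minimizing forces each $s_j$ down to the corresponding supremum, together with the max-of-pieces representation \eqref{eq:max-concave}, which lets the supremum over $\xi$ of a maximum over $k$ split into $K$ separate constraints.

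Next, fix $j,k$ and compute the inner supremum via conjugate duality. Write the norm through its dual, $\lambda\norm{\xi-\zeta_j}=\sup_{\norm{z}_*\le\lambda}\inner{z}{\xi-\zeta_j}$. The inner supremum then equals
\begin{equation*}
\sup_{\xi\in\XiSet}\ \inf_{\norm{z}_*\le\lambda}\big\{\ell_k(\xi)-\inner{z}{\xi}+\inner{z}{\zeta_j}\big\}.
\end{equation*}
The objective is concave in $\xi$ (because $-\ell_k$ is convex) and linear in $z$, and the $z$-set is compact and convex. A minimax theorem (Sion) therefore lets us swap the sup and the inf, giving
\begin{equation*}
\inf_{\norm{z}_*\le\lambda}\Big\{\sup_{\xi}\big[\inner{-z}{\xi}-(-\ell_k+\chi_{\XiSet})(\xi)\big]+\inner{z}{\zeta_j}\Big\}.
\end{equation*}
The bracketed term is $[-\ell_k+\chi_\XiSet]^*(-z)$.

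The third step applies the infimal-convolution rule for the conjugate of a sum. Since $-\ell_k$ is proper, closed and convex and $\XiSet$ is closed and convex, we have $[-\ell_k+\chi_\XiSet]^*(w)=\inf_\nu\{[-\ell_k]^*(w-\nu)+\sigma_\XiSet(\nu)\}$. The sign convention is handled by renaming $z\mapsto -z$, which preserves $\norm{z}_*\le\lambda$; the term $\inner{z}{\zeta_j}$ then becomes $-\inner{z_{jk}}{\zeta_j}$, matching \eqref{eq:convex-reform-general}. The constraint $\inf_{z,\nu}\{\cdots\}\le s_j$ can then be replaced by the existence of some $z_{jk},\nu_{jk}$ satisfying it. Absorbing these infima into the outer minimization and eliminating $\lambda\ge0$, which is implied by $\norm{z_{jk}}_*\le\lambda$, gives the claimed program.

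The main obstacle is justifying the two exchanges. The first is the Sion swap, where $\XiSet$ may be noncompact; compactness of the dual-norm ball on the $z$ side is enough. The second is exactness of the infimal convolution, which needs a constraint qualification such as $\mathrm{ri}(\mathrm{dom}(-\ell_k))\cap\mathrm{ri}(\XiSet)\neq\emptyset$. Without such a condition the identity only holds with a closure, and replacing an infimum by existence of attaining variables needs care. I would first invoke Rockafellar's Theorem 16.4, and would state, as in \citet{esfahani2018data}, that this qualification is implicitly assumed, or argue that any closure gap does not affect the optimal value because $s_j$ can be relaxed by an arbitrary $\epsilon>0$.
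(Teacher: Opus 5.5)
Your proposal is correct and follows the paper's proof step for step: epigraph variables with the split over $k$, the dual-norm representation of $\lambda\norm{\xi-\zeta_j}$, a minimax swap (compactness is needed only on the $z$ side), the conjugate of $-\ell_k+\chi_{\Xi}$, the sign flip $z\mapsto -z$, and the infimal-convolution rule. Your treatment of the closure is more careful than the paper's one-line remark, but one refinement is needed: relaxing $s_j$ by $\epsilon$ only handles non-attainment of the infimum in $\nu_{jk}$. A genuine closure gap, i.e.\ a point where the unclosed infimal convolution is $+\infty$ but its closure is finite, additionally requires moving $z_{jk}$ slightly into the relative interior of its domain and enlarging $\lambda$ by a small amount, which still leaves the optimal value unchanged.
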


\begin{proof}
Starting from \eqref{eq:dual-general}, introduce epigraph variables $s_j$:
\begin{equation}
\inf_{\lambda\ge0,s_j}\left\{\lambda\varepsilon+\sum_jp_js_j:
\sup_{\xi\in\XiSet}\left(\ell(\xi)-\lambda\norm{\xi-\zeta_j}\right)\le s_j,\ \forall j\right\}.
\end{equation}
Using $\ell=\max_k\ell_k$, the constraints are equivalent to
\begin{equation}\label{eq:semiinf}
    \sup_{\xi\in\XiSet}\left(\ell_k(\xi)-\lambda\norm{\xi-\zeta_j}\right)\le s_j,
    \qquad \forall j,k.
\end{equation}
By the definition of the dual norm,
\begin{equation}
    \lambda\norm{\xi-\zeta_j}=\sup_{\norm{z}_{*}\le\lambda}\inner{z}{\xi-\zeta_j}.
\end{equation}
Therefore, under the concavity/convexity conditions and the minimax theorem,
\begin{align*}
\sup_{\xi\in\XiSet}\left(\ell_k(\xi)-\lambda\norm{\xi-\zeta_j}\right)
&=\inf_{\norm{z_{jk}}_*
\le\lambda}\sup_{\xi\in\XiSet}\left(\ell_k(\xi)-\inner{z_{jk}}{\xi-\zeta_j}\right)\\
&=\inf_{\norm{z_{jk}}_*
\le\lambda}\left\{[-\ell_k+\chi_{\XiSet}]^*(-z_{jk})+\inner{z_{jk}}{\zeta_j}\right\}.
\end{align*}
Replacing $z_{jk}$ by $-z_{jk}$ and using inf-convolution of conjugates,
\begin{equation}
[-\ell_k+\chi_{\XiSet}]^*(z_{jk})
=\Cl\inf_{\nu_{jk}}\left\{[-\ell_k]^*(z_{jk}-\nu_{jk})+\sigma_{\XiSet}(\nu_{jk})\right\}.
\end{equation}
Substituting this expression into the epigraph constraints yields \eqref{eq:convex-reform-general}. The closure does not change the resulting inequality constraints under standard lower-semicontinuity conventions.
\end{proof}

\begin{corollary}[Piecewise affine loss over full space]\label{cor:pwa-fullspace}
Suppose $\XiSet=\R^m$ and
\begin{equation}
    \ell(\xi)=\max_{k\le K}\{\inner{a_k}{\xi}+b_k\}.
\end{equation}
Then
\begin{equation}\label{eq:pwa-fullspace}
\Phi_\varepsilon(\ell;\Pbar)=
\begin{aligned}[t]
\inf_{\lambda,s_j}\quad
&\lambda\varepsilon+\sum_{j=1}^Mp_js_j\\
\mathrm{s.t.}\quad
&\inner{a_k}{\zeta_j}+b_k\le s_j,\quad \forall j,k,\\
&\norm{a_k}_*\le\lambda,
\quad \forall k.
\end{aligned}
\end{equation}
Equivalently,
\begin{equation}
    \Phi_\varepsilon(\ell;\Pbar)=\sum_{j=1}^Mp_j\ell(\zeta_j)+\varepsilon\max_{k\le K}\norm{a_k}_*.
\end{equation}
\end{corollary}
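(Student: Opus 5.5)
We specialize Theorem~\ref{thm:convex-reform-general} to the affine pieces $\ell_k(\xi)=\inner{a_k}{\xi}+b_k$ with $\XiSet=\R^m$. The proof has two steps: compute the conjugates in \eqref{eq:convex-reform-general}, then minimize the resulting program in closed form. The strong-duality hypothesis of Theorem~\ref{thm:dual-general} holds here, because $\ell$ is continuous with linear growth.

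For the first step, note that $\sigma_{\R^m}(\nu)=0$ if $\nu=0$ and $+\infty$ otherwise, so feasibility forces $\nu_{jk}=0$. Next, $-\ell_k(\xi)=-\inner{a_k}{\xi}-b_k$, so
\[
[-\ell_k]^*(z)=\sup_\xi\{\inner{z+a_k}{\xi}\}+b_k ,
\]
which equals $b_k$ if $z=-a_k$ and $+\infty$ otherwise. Hence feasibility also forces $z_{jk}=-a_k$. The first constraint then becomes $b_k+\inner{a_k}{\zeta_j}\le s_j$, and the norm constraint becomes $\norm{a_k}_*\le\lambda$. This is exactly \eqref{eq:pwa-fullspace}. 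The sign convention $z\mapsto -z$ used in the proof of Theorem~\ref{thm:convex-reform-general} needs care, but with either sign the norm constraint is the same, because $\norm{-a_k}_*=\norm{a_k}_*$.

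Alternatively, one can argue directly from \eqref{eq:dual-general} and bypass the convex reformulation. Fix $\lambda$ and $j$. If $\lambda<\norm{a_k}_*$ for some $k$, choose a direction $d$ with $\norm{d}\le 1$ and $\inner{a_k}{d}>\lambda$. Moving along $\xi=\zeta_j+td$ with $t\to\infty$ makes $\ell(\xi)-\lambda\norm{\xi-\zeta_j}\to+\infty$, so the inner supremum is infinite. If instead $\lambda\ge\max_k\norm{a_k}_*$, then for every $k$,
\[
\ell_k(\xi)-\lambda\norm{\xi-\zeta_j}\le\ell_k(\zeta_j)+(\norm{a_k}_*-\lambda)\norm{\xi-\zeta_j}\le\ell_k(\zeta_j),
\]
and the bound is attained at $\xi=\zeta_j$. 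Thus the inner supremum equals $\ell(\zeta_j)$.

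Putting this together, the objective is $\lambda\varepsilon+\sum_jp_j\ell(\zeta_j)$ on the set $\{\lambda\ge\max_k\norm{a_k}_*\}$ and $+\infty$ elsewhere. Since $\varepsilon\ge0$, the infimum is attained at $\lambda=\max_k\norm{a_k}_*$. The same reasoning applied to \eqref{eq:pwa-fullspace} gives $s_j=\max_k\ell_k(\zeta_j)=\ell(\zeta_j)$ and the minimal feasible $\lambda$, which yields the closed form.

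The main obstacle is the unboundedness case. We need to pick the direction $d$ that attains the dual norm, which exists by compactness of the unit ball in $\R^m$, and we need the boundary case $\lambda=\norm{a_k}_*$, where the supremum stays finite and equals $\ell_k(\zeta_j)$. Everything else is routine.
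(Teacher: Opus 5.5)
Your proposal is correct, and your first route is exactly the paper's proof: substitute $\sigma_{\R^m}=\chi_{\{0\}}$ and the fact that $[-\ell_k]^*$ is finite only at $z=-a_k$ into Theorem~\ref{thm:convex-reform-general}, then read off $s_j=\ell(\zeta_j)$ and $\lambda=\max_k\norm{a_k}_*$ at the optimum. Your second computation, done directly from \eqref{eq:dual-general}, is a valid supplement. It bypasses the minimax and inf-convolution machinery and derives the closed form that the paper states without derivation. There you do not need compactness: $\norm{a_k}_*>\lambda$ already yields some $d$ with $\norm{d}\le1$ and $\inner{a_k}{d}>\lambda$ by the definition of a supremum.
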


\begin{proof}
For $\XiSet=\R^m$, the support function is $\sigma_{\R^m}(\nu)=\chi_{\{0\}}(\nu)$. For affine pieces, $[-\ell_k]^*(z)$ is finite only when $z=-a_k$. Substituting these identities into Theorem \ref{thm:convex-reform-general} gives \eqref{eq:pwa-fullspace}. At the optimum, $s_j=\max_k\{\inner{a_k}{\zeta_j}+b_k\}$ and $\lambda=\max_k\norm{a_k}_*$.
\end{proof}

\section{Predictive Nominal Distributions}\label{sec:predictive-dist}

We now replace the empirical center by a learned predictive distribution $\Ptil$ of the form \eqref{eq:predictive-dist}. Define the predictive Wasserstein ambiguity set
\begin{equation}
    \widetilde{\cP}_N(\varepsilon)=B_\varepsilon(\Ptil),
\end{equation}
and the predictive DRO value
\begin{equation}\label{eq:pred-dro}
    \widetilde J_N(\varepsilon)=\inf_{x\in X}\sup_{Q\in B_\varepsilon(\Ptil)}\E_Q[h(x,\xi)].
\end{equation}
Let $\widetilde x_N(\varepsilon)$ be an optimizer when it exists.

\subsection{Finite-sample guarantee}

\begin{theorem}[Predictive finite-sample guarantee]\label{thm:pred-finite}
Suppose that, for some radius $\epstilde(\beta)$,
\begin{equation}\label{eq:pred-coverage}
    P^N\left\{\dW(P,\Ptil)\le \epstilde(\beta)\right\}\ge1-\beta.
\end{equation}
Let $\widetilde J_N=\widetilde J_N(\epstilde(\beta))$ and let $\widetilde x_N=\widetilde x_N(\epstilde(\beta))$. Then
\begin{equation}
    P^N\left\{\E_P[h(\widetilde x_N,\xi)]\le \widetilde J_N\right\}\ge1-\beta.
\end{equation}
\end{theorem}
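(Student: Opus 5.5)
The plan is to repeat the argument of Theorem \ref{thm:emp-guarantee} with the empirical center $\Phat$ replaced by the predictive center $\Ptil$ and the radius $\epsmc(\beta)$ replaced by $\epstilde(\beta)$. Nothing in that argument used the form of $\Phat$: it used only that the realized ambiguity ball contains $P$ on an event of probability at least $1-\beta$, together with the definition of the optimizer. No duality result from Section \ref{sec:dual} is needed.

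First I will fix the event
\[
E_N=\{\dW(P,\Ptil)\le\epstilde(\beta)\}.
\]
By hypothesis \eqref{eq:pred-coverage}, $P^N(E_N)\ge1-\beta$, where the probability space is enlarged as stated in the preliminaries if the predictive model or scenario generator uses auxiliary randomness. On $E_N$ we have $P\in B_{\epstilde(\beta)}(\Ptil)$, because $P\in\cM(\XiSet)$ is implicit in the finite-first-moment convention.

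Second, on $E_N$ I will chain two inequalities. The first is
\[
\E_P[h(\widetilde x_N,\xi)]\le\sup_{Q\in B_{\epstilde(\beta)}(\Ptil)}\E_Q[h(\widetilde x_N,\xi)],
\]
which holds because $P$ is feasible in the supremum. The second identifies the right-hand side with $\widetilde J_N$, since $\widetilde x_N$ attains the infimum in \eqref{eq:pred-dro} at radius $\epstilde(\beta)$. Therefore $E_N\subseteq\{\E_P[h(\widetilde x_N,\xi)]\le\widetilde J_N\}$, and monotonicity of $P^N$ gives the claim.

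The only delicate point is measurability. Both $\Ptil$ and $\epstilde(\beta)$ may be data-dependent, and $\widetilde x_N$ is a random optimizer, so the outcome event must be measurable, or else the statement must be read with inner probability. I would handle this by assuming a measurable selection of $\widetilde x_N$, as is implicit in Theorem \ref{thm:emp-guarantee}. Even without that assumption, the event contains the measurable set $E_N$, so the conclusion still holds for the inner probability.
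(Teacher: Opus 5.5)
Your proof is correct and matches the paper's argument: on the coverage event $\{\dW(P,\Ptil)\le\epstilde(\beta)\}$, $P$ is feasible in the inner supremum and $\widetilde x_N$ is optimal, so $\E_P[h(\widetilde x_N,\xi)]\le\widetilde J_N$, and the hypothesis gives that event probability at least $1-\beta$. Your remark on measurable selection, or reading the conclusion as an inner probability, is a sensible addition that the paper leaves implicit.
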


\begin{proof}
Define $\mathcal E_N=\{\dW(P,\Ptil)\le\epstilde(\beta)\}$. On $\mathcal E_N$, $P\in B_{\epstilde(\beta)}(\Ptil)$. Hence
\begin{equation*}
\E_P[h(\widetilde x_N,\xi)]\le
\sup_{Q\in B_{\epstilde(\beta)}(\Ptil)}\E_Q[h(\widetilde x_N,\xi)]=\widetilde J_N.
\end{equation*}
The event has probability at least $1-\beta$ by assumption.
\end{proof}

\begin{remark}
Theorem \ref{thm:pred-finite} shows that the empirical distribution is not essential. Any random nominal distribution can be used if one can calibrate a high-probability Wasserstein error bound for it.
\end{remark}

\subsection{Predictive radius dominance over empirical radius}

Define the empirical and predictive Wasserstein errors
\begin{equation}
    R_N^{\mathrm{emp}}=\dW(P,\Phat),\qquad
    R_N^{\mathrm{pred}}=\dW(P,\Ptil).
\end{equation}

\begin{theorem}[Coverage dominance]\label{thm:coverage-dom}
Assume
\begin{equation}\label{eq:pred-closer-a-s}
    R_N^{\mathrm{pred}}\le R_N^{\mathrm{emp}}
\end{equation}
for every dataset realization. Then, for every $\varepsilon\ge0$,
\begin{equation}
    \{P\in B_\varepsilon(\Phat)\}\subseteq \{P\in B_\varepsilon(\Ptil)\},
\end{equation}
and therefore
\begin{equation}
    P^N\{P\in B_\varepsilon(\Ptil)\}\ge P^N\{P\in B_\varepsilon(\Phat)\}.
\end{equation}
\end{theorem}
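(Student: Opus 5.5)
The plan is a direct set-inclusion argument followed by monotonicity of probability. Fix $\varepsilon\ge0$ and a dataset realization $(\xi_1,\ldots,\xi_N)$. With any auxiliary randomization of the predictive model or scenario generator absorbed into the enlarged probability space, as in the notation section, both $\Phat$ and $\Ptil$ are determined by this realization. By definition of the ball \eqref{eq:empirical-ball}, the event $\{P\in B_\varepsilon(\Phat)\}$ is exactly the event $\{R_N^{\mathrm{emp}}\le\varepsilon\}$. Likewise, $\{P\in B_\varepsilon(\Ptil)\}$ is exactly $\{R_N^{\mathrm{pred}}\le\varepsilon\}$.

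The key step is the pointwise implication. Suppose the realization lies in the first event, so $R_N^{\mathrm{emp}}\le\varepsilon$. Hypothesis \eqref{eq:pred-closer-a-s} then gives $R_N^{\mathrm{pred}}\le R_N^{\mathrm{emp}}\le\varepsilon$, so $P\in B_\varepsilon(\Ptil)$. Because the hypothesis holds for every realization, this is a genuine inclusion of events rather than an almost-sure one. The probability inequality then follows from monotonicity of $P^N$.

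The only point that needs care is measurability, so that both events are legitimate events under $P^N$. I would observe that $\dW(P,\cdot)$ is continuous, indeed $1$-Lipschitz, on $\cM(\XiSet)$ by the triangle inequality. Hence it suffices that $\Phat$ and $\Ptil$ be measurable maps of the data. For $\Phat$ this is standard. For $\Ptil$ I would take it as an implicit standing assumption, as the paper already does when it writes $P^N\{\dW(P,\Ptil)\le\epstilde(\beta)\}$ in \eqref{eq:pred-coverage}. Alternatively, one could phrase the conclusion with inner probability. Apart from this bookkeeping, the argument is immediate, and I expect no real obstacle.
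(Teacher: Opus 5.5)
Your proposal is correct and follows essentially the same argument as the paper: on the event $R_N^{\mathrm{emp}}\le\varepsilon$, the pointwise hypothesis gives $R_N^{\mathrm{pred}}\le\varepsilon$, and monotonicity of $P^N$ yields the probability inequality. Your measurability remark (that $\dW(P,\cdot)$ is $1$-Lipschitz, so measurability of $\Ptil$ suffices) is a reasonable bit of bookkeeping that the paper leaves implicit.
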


\begin{proof}
If $P\in B_\varepsilon(\Phat)$, then $R_N^{\mathrm{emp}}\le\varepsilon$. By \eqref{eq:pred-closer-a-s}, $R_N^{\mathrm{pred}}\le R_N^{\mathrm{emp}}\le\varepsilon$, hence $P\in B_\varepsilon(\Ptil)$. Taking probabilities proves the second claim.
\end{proof}

Define minimal confidence radii
\begin{align}
\remp(\beta)&=\inf\left\{r\ge0:P^N(R_N^{\mathrm{emp}}\le r)\ge1-\beta\right\},\label{eq:remp}\\
\rpred(\beta)&=\inf\left\{r\ge0:P^N(R_N^{\mathrm{pred}}\le r)\ge1-\beta\right\}.\label{eq:rpred}
\end{align}

\begin{corollary}[Confidence-radius dominance]\label{cor:radius-dom}
Under \eqref{eq:pred-closer-a-s},
\begin{equation}
    \rpred(\beta)\le \remp(\beta).
\end{equation}
\end{corollary}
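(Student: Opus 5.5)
The plan is to argue through set inclusion of the feasible sets in the definitions \eqref{eq:remp} and \eqref{eq:rpred}. Write
\[
S_{\mathrm{emp}}=\{r\ge0:P^N(R_N^{\mathrm{emp}}\le r)\ge1-\beta\},\qquad
S_{\mathrm{pred}}=\{r\ge0:P^N(R_N^{\mathrm{pred}}\le r)\ge1-\beta\}.
\]
Then $\remp(\beta)=\inf S_{\mathrm{emp}}$ and $\rpred(\beta)=\inf S_{\mathrm{pred}}$. It suffices to show $S_{\mathrm{emp}}\subseteq S_{\mathrm{pred}}$, because the infimum over a larger set is no larger.

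To get the inclusion, fix $r\in S_{\mathrm{emp}}$. Theorem \ref{thm:coverage-dom}, applied with $\varepsilon=r$, gives the event inclusion
\[
\{R_N^{\mathrm{emp}}\le r\}=\{P\in B_r(\Phat)\}\subseteq\{P\in B_r(\Ptil)\}=\{R_N^{\mathrm{pred}}\le r\}.
\]
Alternatively, this follows directly from \eqref{eq:pred-closer-a-s}. Monotonicity of $P^N$ then yields
\[
P^N(R_N^{\mathrm{pred}}\le r)\ge P^N(R_N^{\mathrm{emp}}\le r)\ge1-\beta,
\]
so $r\in S_{\mathrm{pred}}$.

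For the degenerate cases, I would use the convention $\inf\emptyset=+\infty$. If $S_{\mathrm{emp}}$ is empty, the claimed inequality holds trivially. Otherwise the inclusion gives $\rpred(\beta)\le\remp(\beta)$.

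I expect no real obstacle. The one point requiring care is measurability: $R_N^{\mathrm{emp}}$ and $R_N^{\mathrm{pred}}$ must be random variables so that the events above are in the domain of $P^N$. I would take this as implicit in the definitions \eqref{eq:remp} and \eqref{eq:rpred}. Those definitions presuppose it, and the enlargement of the probability space described in the preliminaries keeps the same notation for any additional randomness.
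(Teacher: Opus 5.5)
Your proposal is correct and follows essentially the same route as the paper. Both arguments get the pointwise event inclusion $\{R_N^{\mathrm{emp}}\le r\}\subseteq\{R_N^{\mathrm{pred}}\le r\}$ from Theorem~\ref{thm:coverage-dom}, conclude that every empirically feasible radius is predictively feasible, and take infima; your explicit handling of $\inf\emptyset=+\infty$ and of measurability is a harmless refinement.
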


\begin{proof}
By Theorem \ref{thm:coverage-dom}, for every $r\ge0$,
\begin{equation}
    P^N(R_N^{\mathrm{pred}}\le r)\ge P^N(R_N^{\mathrm{emp}}\le r).
\end{equation}
Thus every radius that attains empirical confidence $1-\beta$ also attains predictive confidence $1-\beta$. Taking infima gives the result.
\end{proof}

\begin{theorem}[High-probability dominance]\label{thm:hp-radius-dom}
Let $\alpha,\beta\in(0,1)$ satisfy $\alpha+\beta<1$. Suppose
\begin{equation}
    P^N\{R_N^{\mathrm{pred}}\le R_N^{\mathrm{emp}}\}\ge1-\alpha.
\end{equation}
Then, for every $r\ge0$,
\begin{equation}
    P^N(R_N^{\mathrm{pred}}\le r)\ge P^N(R_N^{\mathrm{emp}}\le r)-\alpha.
\end{equation}
Consequently,
\begin{equation}
    \rpred(\beta+\alpha)\le \remp(\beta).
\end{equation}
\end{theorem}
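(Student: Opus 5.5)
The proof has two parts: an elementary event inclusion followed by a union bound, then the same infimum argument used in Corollary \ref{cor:radius-dom}. Fix $r\ge0$ and let $D_N=\{R_N^{\mathrm{pred}}\le R_N^{\mathrm{emp}}\}$, so $P^N(D_N^c)\le\alpha$ by hypothesis. As in the proof of Theorem \ref{thm:coverage-dom}, on the event $\{R_N^{\mathrm{emp}}\le r\}\cap D_N$ we have $R_N^{\mathrm{pred}}\le R_N^{\mathrm{emp}}\le r$. This gives the inclusion
\begin{equation*}
\{R_N^{\mathrm{emp}}\le r\}\cap D_N\subseteq\{R_N^{\mathrm{pred}}\le r\}.
\end{equation*}
Taking probabilities and using $P^N(A\cap D_N)\ge P^N(A)-P^N(D_N^c)\ge P^N(A)-\alpha$ yields the first claim $P^N(R_N^{\mathrm{pred}}\le r)\ge P^N(R_N^{\mathrm{emp}}\le r)-\alpha$. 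This step needs only measurability of $R_N^{\mathrm{emp}}$ and $R_N^{\mathrm{pred}}$, which is implicit in the definitions \eqref{eq:remp}--\eqref{eq:rpred}.

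For the radius comparison, take any $r$ in the feasible set defining $\remp(\beta)$, that is, $P^N(R_N^{\mathrm{emp}}\le r)\ge 1-\beta$. The first part then gives $P^N(R_N^{\mathrm{pred}}\le r)\ge 1-\beta-\alpha=1-(\beta+\alpha)$. So $r$ is feasible for the infimum defining $\rpred(\beta+\alpha)$, and hence $\rpred(\beta+\alpha)\le r$. Taking the infimum over all such $r$ gives $\rpred(\beta+\alpha)\le\remp(\beta)$.

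The only delicate point is to ensure that the objects in the final inequality are well defined. The assumption $\alpha+\beta<1$ guarantees that $\beta+\alpha\in(0,1)$, so $\rpred(\beta+\alpha)$ is a genuine confidence-level quantity and not trivially zero. If the empirical feasible set is empty, then $\remp(\beta)=+\infty$ and the inequality holds trivially. I do not expect any substantive obstacle; the argument is a union-bound variant of Corollary \ref{cor:radius-dom}.
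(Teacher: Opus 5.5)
Your proposal is correct and follows essentially the same route as the paper: the inclusion $\{R_N^{\mathrm{emp}}\le r\}\cap\{R_N^{\mathrm{pred}}\le R_N^{\mathrm{emp}}\}\subseteq\{R_N^{\mathrm{pred}}\le r\}$, a complement bound, and then passing to the infimum. Your ``every empirically feasible $r$ is feasible at level $\beta+\alpha$'' step is a slightly cleaner version of the paper's ``up to an arbitrarily small slack'' argument; the condition $\alpha+\beta<1$ is not actually needed for the inequality, since for $\alpha+\beta\ge1$ one has $\rpred(\beta+\alpha)=0$ and the bound is trivial.
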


\begin{proof}
Let $A_N=\{R_N^{\mathrm{pred}}\le R_N^{\mathrm{emp}}\}$. Then
\begin{equation}
    A_N\cap\{R_N^{\mathrm{emp}}\le r\}\subseteq\{R_N^{\mathrm{pred}}\le r\}.
\end{equation}
Therefore,
\begin{align*}
P^N(R_N^{\mathrm{pred}}\le r)
&\ge P^N(A_N\cap\{R_N^{\mathrm{emp}}\le r\})\\
&\ge P^N(R_N^{\mathrm{emp}}\le r)-P^N(A_N^c)\\
&\ge P^N(R_N^{\mathrm{emp}}\le r)-\alpha.
\end{align*}
If $r=\remp(\beta)$ up to an arbitrarily small slack, the right side is at least $1-\beta-\alpha$. Taking the infimum yields $\rpred(\beta+\alpha)\le\remp(\beta)$.
\end{proof}

\begin{theorem}[Approximate confidence-radius dominance]\label{thm:approx-radius-dom}
Suppose there is a deterministic $\Delta_N\ge0$ such that
\begin{equation}
    R_N^{\mathrm{pred}}\le R_N^{\mathrm{emp}}+\Delta_N
    \qquad\text{almost surely.}
\end{equation}
Then, for every $\beta\in(0,1)$,
\begin{equation}
    \rpred(\beta)\le \remp(\beta)+\Delta_N.
\end{equation}
More generally, if $\alpha+\beta<1$ and
\begin{equation}
P^N\{R_N^{\mathrm{pred}}\le R_N^{\mathrm{emp}}+\Delta_N\}\ge1-\alpha,
\end{equation}
then
\begin{equation}
    \rpred(\beta+\alpha)\le \remp(\beta)+\Delta_N.
\end{equation}
\end{theorem}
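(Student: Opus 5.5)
The plan is to mirror the proofs of Corollary \ref{cor:radius-dom} and Theorem \ref{thm:hp-radius-dom}, with the threshold shifted by $\Delta_N$. The central tool is an event inclusion. Because $\Delta_N$ is deterministic, the event $\{R_N^{\mathrm{emp}}\le r\}$ combined with $R_N^{\mathrm{pred}}\le R_N^{\mathrm{emp}}+\Delta_N$ gives
\[
\{R_N^{\mathrm{pred}}\le r+\Delta_N\}
\]
for every $r\ge0$. The inequality for the minimal confidence radius then comes from passing to infima in \eqref{eq:remp}--\eqref{eq:rpred}.

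\emph{Almost-sure case.} Fix $r\ge0$. Since the bound holds almost surely, the inclusion holds up to a null set, so
\[
P^N(R_N^{\mathrm{pred}}\le r+\Delta_N)\ge P^N(R_N^{\mathrm{emp}}\le r).
\]
Now take any $r$ in the feasible set of \eqref{eq:remp}, so that $P^N(R_N^{\mathrm{emp}}\le r)\ge1-\beta$. Then $r+\Delta_N$ lies in the feasible set of \eqref{eq:rpred}, which gives $\rpred(\beta)\le r+\Delta_N$. Taking the infimum over such $r$ yields $\rpred(\beta)\le\remp(\beta)+\Delta_N$.

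\emph{High-probability case.} Let $A_N=\{R_N^{\mathrm{pred}}\le R_N^{\mathrm{emp}}+\Delta_N\}$. Then
\[
A_N\cap\{R_N^{\mathrm{emp}}\le r\}\subseteq\{R_N^{\mathrm{pred}}\le r+\Delta_N\},
\]
so
\[
P^N(R_N^{\mathrm{pred}}\le r+\Delta_N)\ge P^N(R_N^{\mathrm{emp}}\le r)-P^N(A_N^c)\ge P^N(R_N^{\mathrm{emp}}\le r)-\alpha.
\]
For any $r$ feasible for $\remp(\beta)$, the right side is at least $1-(\beta+\alpha)$. Hence $r+\Delta_N$ is feasible for $\rpred(\beta+\alpha)$, which is well defined because $\alpha+\beta<1$. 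Taking the infimum over $r$ gives the claim.

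The only delicate point is the infimum step, since the infimum in \eqref{eq:remp} need not be attained. I avoid this by never evaluating at $r=\remp(\beta)$ itself. Instead I work with arbitrary feasible $r$ and use that the map $r\mapsto r+\Delta_N$ sends feasible radii for \eqref{eq:remp} to feasible radii for \eqref{eq:rpred}, so the infima compare directly. A secondary point is measurability of $R_N^{\mathrm{pred}}$ and $R_N^{\mathrm{emp}}$, which I take as implicit, as in the earlier results. The almost-sure statement is then simply the special case $\alpha=0$ of the second argument.
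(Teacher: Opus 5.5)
Your proposal is correct and matches the paper's proof: both use the event inclusion $\{R_N^{\mathrm{emp}}\le r\}\subseteq\{R_N^{\mathrm{pred}}\le r+\Delta_N\}$, intersected with $A_N$ and combined with the union bound in the high-probability case, then map feasible empirical radii to feasible predictive radii and pass to infima. Working with an arbitrary feasible $r$ is, if anything, a slightly cleaner version of the paper's ``up to an arbitrarily small slack'' step.
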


\begin{proof}
For any $r\ge0$, the almost-sure inequality implies
\begin{equation}
\{R_N^{\mathrm{emp}}\le r\}\subseteq
\{R_N^{\mathrm{pred}}\le r+\Delta_N\}.
\end{equation}
Thus every empirical radius $r$ with coverage at least $1-\beta$ yields a predictive radius $r+\Delta_N$ with the same coverage. Taking infima proves the first claim. For the high-probability statement, intersect $\{R_N^{\mathrm{emp}}\le r\}$ with the event on which the approximate inequality holds and use the same union-bound argument as in Theorem \ref{thm:hp-radius-dom}.
\end{proof}

\subsection{Perfect and approximately perfect predictive models}

\begin{definition}[Perfect predictive model]
The predictive model is perfect on a dataset if
\begin{equation}
    \Ptil=P.
\end{equation}
Define $\pi_N=P^N\{\Ptil=P\}$.
\end{definition}

\begin{proposition}[Perfect model reduces DRO to the true problem]\label{prop:perfect}
If $\Ptil=P$, then with radius $0$,
\begin{equation}
    \widetilde J_N(0)=J^\star.
\end{equation}
Moreover,
\begin{equation}
    P^N\{\widetilde J_N(0)=J^\star\}\ge\pi_N.
\end{equation}
\end{proposition}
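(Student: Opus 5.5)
The plan is to show that the Wasserstein ball of radius zero around $\Ptil$ collapses to the single distribution $\Ptil$, substitute $\Ptil=P$, and then take probabilities.

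\emph{Step 1: the zero-radius ball.} I will first record that $B_0(Q)=\{Q\}$ for every $Q\in\cM(\XiSet)$. If $\dW(Q',Q)=0$, then the Kantorovich--Rubinstein representation \eqref{eq:KR} gives $\E_{Q'}[f]=\E_Q[f]$ for every $f\in\mathrm{Lip}_1$. Bounded Lipschitz functions determine a probability measure, so $Q'=Q$. The reverse inclusion $Q\in B_0(Q)$ is immediate from the diagonal coupling.

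\emph{Step 2: the value identity.} Applying Step 1 with $Q=\Ptil$ in \eqref{eq:pred-dro}, the inner supremum equals $\E_{\Ptil}[h(x,\xi)]$ for each $x\in X$. On any dataset where $\Ptil=P$, this becomes $\E_P[h(x,\xi)]$. Taking the infimum over $x\in X$ then gives $\widetilde J_N(0)=J^\star$ by \eqref{eq:true-sp}.

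One caveat needs a remark. The paper writes $\Ptil$ as a finitely supported law \eqref{eq:predictive-dist}, so $\Ptil=P$ can only occur when $P$ is discrete, and $\pi_N$ may be zero. I will note that the argument does not use the discrete form of $\Ptil$. It only needs $\Ptil\in\cM(\XiSet)$, so it also covers a generic predictive law $\widehat G_N$.

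\emph{Step 3: the probability bound.} Step 2 gives the event inclusion $\{\Ptil=P\}\subseteq\{\widetilde J_N(0)=J^\star\}$. Taking $P^N$-probabilities yields $P^N\{\widetilde J_N(0)=J^\star\}\ge\pi_N$.

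The only real subtlety is measurability of these two events. Following the convention in the preliminaries, I will assume $\{\Ptil=P\}$ is measurable, since $\pi_N$ is defined on it. If $\{\widetilde J_N(0)=J^\star\}$ is not measurable, I will read the inequality in terms of inner probability. This is a formality; the main step is Step 1, the identification $B_0(\Ptil)=\{\Ptil\}$.
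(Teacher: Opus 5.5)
Your proposal is correct and matches the paper's proof: the paper also notes that $B_0(\Ptil)=\{P\}$ when $\Ptil=P$, reduces the inner supremum to $\E_P[h(x,\xi)]$, and passes to probabilities using the inclusion $\{\Ptil=P\}\subseteq\{\widetilde J_N(0)=J^\star\}$. Your Step 1 justifies the identification $B_0(Q)=\{Q\}$, via Kantorovich--Rubinstein duality and the fact that bounded Lipschitz functions determine a measure; the paper takes this identification as evident.
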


\begin{proof}
If $\Ptil=P$, then $B_0(\Ptil)=\{P\}$. Hence
\begin{equation*}
\widetilde J_N(0)=\inf_{x\in X}\sup_{Q\in\{P\}}\E_Q[h(x,\xi)]
=\inf_{x\in X}\E_P[h(x,\xi)]=J^\star.
\end{equation*}
Taking probabilities gives the second statement.
\end{proof}

\begin{remark}
If $P$ is continuous and $\Ptil$ is finite discrete, exact equality usually has probability zero. The practically relevant version is approximate perfection: $\dW(P,\Ptil)\le r_N$ with $r_N\to0$.
\end{remark}

\subsection{Predictive asymptotic consistency}

\begin{assumption}[Regularity for consistency]\label{ass:regularity}
The loss $h(x,\xi)$ is upper semicontinuous in $\xi$ for every $x\in X$, and there exists $L_0\ge0$ such that
\begin{equation}
    |h(x,\xi)|\le L_0(1+\norm{\xi})
    \qquad \forall x\in X,\ \xi\in\XiSet.
\end{equation}
For optimizer convergence, assume additionally that $X$ is closed and $h(x,\xi)$ is lower semicontinuous in $x$ for every $\xi$.
\end{assumption}

\begin{theorem}[Predictive asymptotic consistency]\label{thm:pred-consistency}
Let $\varepsilon_N\to0$ and suppose
\begin{equation}\label{eq:eventual-pred-coverage}
    \dW(P,\Ptil)\le\varepsilon_N
\end{equation}
for all sufficiently large $N$, almost surely. Under Assumption \ref{ass:regularity},
\begin{equation}
    \widetilde J_N(\varepsilon_N)\to J^\star
    \qquad\text{almost surely.}
\end{equation}
If the additional optimizer-regularity conditions in Assumption \ref{ass:regularity} hold, then every accumulation point of $\{\widetilde x_N(\varepsilon_N)\}$ is an optimizer of \eqref{eq:true-sp}.
\end{theorem}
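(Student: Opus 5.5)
The plan is to sandwich $\widetilde J_N(\varepsilon_N)$ between $J^\star$ and a quantity that converges to $J^\star$, working pathwise on the almost-sure event $\Omega_0$ on which \eqref{eq:eventual-pred-coverage} holds for all $N\ge N_0(\omega)$.

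\textbf{Lower bound.} Fix $\omega\in\Omega_0$ and $N\ge N_0(\omega)$. Then $P\in B_{\varepsilon_N}(\Ptil)$, so for every $x\in X$,
\[
\sup_{Q\in B_{\varepsilon_N}(\Ptil)}\E_Q[h(x,\xi)]\ge\E_P[h(x,\xi)]\ge J^\star .
\]
Taking the infimum over $x$ gives $\widetilde J_N(\varepsilon_N)\ge J^\star$. This is the same argument as in Theorem \ref{thm:pred-finite}.

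\textbf{Upper bound.} Fix $\eta>0$ and choose $x_\eta\in X$ with $\E_P[h(x_\eta,\xi)]\le J^\star+\eta$. Then $\widetilde J_N(\varepsilon_N)\le\sup_{Q\in B_{\varepsilon_N}(\Ptil)}\E_Q[h(x_\eta,\xi)]$. For any $Q$ in this ball, the triangle inequality gives $\dW(Q,P)\le\dW(Q,\Ptil)+\dW(\Ptil,P)\le2\varepsilon_N$. Hence the ball is contained in $B_{2\varepsilon_N}(P)$, a ball around the \emph{fixed} law $P$ with vanishing radius, and it suffices to show
\[
\sup_{Q\in B_{\delta}(P)}\E_Q[h(x_\eta,\xi)]\to\E_P[h(x_\eta,\xi)]\qquad\text{as }\delta\downarrow0 .
\]
For this I would take $Q_k\in B_{\delta_k}(P)$ with $\delta_k\to0$ attaining the supremum up to $1/k$. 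Convergence in $\dW$ is weak convergence together with convergence of first moments, so $\norm{\xi}$ is uniformly integrable under $\{Q_k\}$. The growth bound $h\le L_0(1+\norm{\xi})$ makes $h(x_\eta,\cdot)$ bounded above by a uniformly integrable function. Combined with upper semicontinuity in $\xi$, the Portmanteau theorem for u.s.c.\ functions bounded above by a uniformly integrable envelope gives $\limsup_k\E_{Q_k}[h(x_\eta,\xi)]\le\E_P[h(x_\eta,\xi)]$. Therefore $\limsup_N\widetilde J_N(\varepsilon_N)\le J^\star+\eta$, and since $\eta$ is arbitrary, $\widetilde J_N(\varepsilon_N)\to J^\star$ on $\Omega_0$. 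I expect this u.s.c./uniform-integrability step to be the main technical obstacle. The alternative of using the dual \eqref{eq:dual-general} is messier because $\Ptil$ changes with $N$; the reduction to a ball around the fixed $P$ avoids this.

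\textbf{Optimizers.} Let $x_{N_k}\to\bar x$ along a subsequence, with $\bar x\in X$ because $X$ is closed. On $\Omega_0$, for large $k$,
\[
\E_P[h(x_{N_k},\xi)]\le\widetilde J_{N_k}(\varepsilon_{N_k})\to J^\star .
\]
Lower semicontinuity in $x$ together with Fatou's lemma gives $\E_P[h(\bar x,\xi)]\le\liminf_k\E_P[h(x_{N_k},\xi)]\le J^\star$, so $\bar x$ is optimal. Fatou applies because the lower bound $h\ge-L_0(1+\norm{\xi})$ is $P$-integrable, as $P\in\cM(\XiSet)$ has a finite first moment.
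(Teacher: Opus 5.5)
Your proposal is correct and follows essentially the same route as the paper: the lower bound $J^\star\le\widetilde J_N(\varepsilon_N)$ from eventual coverage, and the upper bound via a near-optimal true decision, the triangle-inequality estimate $\dW(Q,P)\le 2\varepsilon_N$, and upper semicontinuity with linear growth. The accumulation-point argument (closedness, Fatou, and lower semicontinuity in $x$) is also the paper's. Your version is slightly more careful in two places. You take the infimum over $x$ directly for the lower bound, so value convergence does not require $\widetilde x_N$ to exist. You also justify the paper's asserted ``portmanteau-type'' step explicitly, through uniform integrability of $\norm{\xi}$ under $\dW$-convergence.
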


\begin{proof}
The eventual coverage condition implies
\begin{equation}\label{eq:lower-sandwich}
J^\star\le \E_P[h(\widetilde x_N,\xi)]\le \widetilde J_N(\varepsilon_N)
\end{equation}
for all sufficiently large $N$ almost surely. It remains to prove $\limsup_N\widetilde J_N(\varepsilon_N)\le J^\star$.

Fix $\delta>0$ and choose $x_\delta\in X$ such that
\begin{equation}
    \E_P[h(x_\delta,\xi)]\le J^\star+\delta.
\end{equation}
For each $N$, let $Q_N\in B_{\varepsilon_N}(\Ptil)$ be $\delta$-suboptimal for the inner supremum at $x_\delta$, i.e.,
\begin{equation}
\sup_{Q\in B_{\varepsilon_N}(\Ptil)}\E_Q[h(x_\delta,\xi)]\le \E_{Q_N}[h(x_\delta,\xi)]+\delta.
\end{equation}
By the triangle inequality and \eqref{eq:eventual-pred-coverage},
\begin{equation}
    \dW(Q_N,P)\le \dW(Q_N,\Ptil)+\dW(\Ptil,P)\le2\varepsilon_N\to0.
\end{equation}
Under upper semicontinuity and linear growth, convergence in Wasserstein distance implies the portmanteau-type upper semicontinuity
\begin{equation}
    \limsup_{N\to\infty}\E_{Q_N}[h(x_\delta,\xi)]\le \E_P[h(x_\delta,\xi)].
\end{equation}
Therefore,
\begin{align*}
\limsup_{N\to\infty}\widetilde J_N(\varepsilon_N)
&\le \limsup_{N\to\infty}\sup_{Q\in B_{\varepsilon_N}(\Ptil)}\E_Q[h(x_\delta,\xi)]\\
&\le \E_P[h(x_\delta,\xi)]+\delta\\
&\le J^\star+2\delta.
\end{align*}
Letting $\delta\downarrow0$ yields $\limsup_N\widetilde J_N(\varepsilon_N)\le J^\star$. Together with \eqref{eq:lower-sandwich}, this proves value convergence.

For optimizer convergence, the following argument is a direct variational limit argument in the spirit of \citet{rockafellar1998variational}. Let $x^\star$ be an accumulation point and pass to a convergent subsequence $\widetilde x_{N_k}\to x^\star$. Closedness gives $x^\star\in X$. The common lower envelope $-L_0(1+\norm{\xi})$ is $P$-integrable because $P\in\cM(\XiSet)$. Applying Fatou's lemma after shifting by this envelope, and using lower semicontinuity in $x$, gives
\begin{align*}
J^\star
&\le \E_P[h(x^\star,\xi)]\\
&\le \E_P\left[\liminf_{k\to\infty}h(\widetilde x_{N_k},\xi)\right]\\
&\le \liminf_{k\to\infty}\E_P[h(\widetilde x_{N_k},\xi)]\\
&\le \liminf_{k\to\infty}\widetilde J_{N_k}(\varepsilon_{N_k})=J^\star.
\end{align*}
Thus $\E_P[h(x^\star,\xi)]=J^\star$.
\end{proof}

\subsection{Lipschitz regret and certificate bounds}

\begin{assumption}[Uniform Lipschitz loss]\label{ass:lipschitz}
There exists $L>0$ such that for all $x\in X$ and all $\xi,\xi'\in\XiSet$,
\begin{equation}
    |h(x,\xi)-h(x,\xi')|\le L\norm{\xi-\xi'}.
\end{equation}
\end{assumption}

\begin{theorem}[Predictive Lipschitz regret bound]\label{thm:pred-regret}
Let $r_N$ be any bound satisfying $\dW(P,\Ptil)\le r_N$.  Fix $\varepsilon_N\ge0$ and write
\[
    \widetilde x_N=\widetilde x_N(\varepsilon_N).
\]
Under Assumption \ref{ass:lipschitz}, on the coverage event $\{\dW(P,\Ptil)\le\varepsilon_N\}$,
\begin{equation}\label{eq:general-regret-pred}
    0\le \E_P[h(\widetilde x_N,\xi)]-J^\star
    \le L(\varepsilon_N+r_N).
\end{equation}
In particular, the event is guaranteed whenever $r_N\le\varepsilon_N$, and choosing $\varepsilon_N=r_N$ gives
\begin{equation}\label{eq:2L-regret-pred}
    0\le \E_P[h(\widetilde x_N,\xi)]-J^\star\le2Lr_N.
\end{equation}
\end{theorem}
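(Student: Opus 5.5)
The proof is a sandwich argument built on two facts. The first is the Kantorovich--Rubinstein representation \eqref{eq:KR}. Under Assumption \ref{ass:lipschitz}, the function $h(x,\cdot)/L$ lies in $\mathrm{Lip}_1$ for every fixed $x\in X$. Hence for any $Q_1,Q_2\in\cM(\XiSet)$,
\begin{equation*}
\left|\E_{Q_1}[h(x,\xi)]-\E_{Q_2}[h(x,\xi)]\right|\le L\,\dW(Q_1,Q_2).
\end{equation*}
The second fact is the coverage argument of Theorem \ref{thm:pred-finite}. The lower bound $0\le\E_P[h(\widetilde x_N,\xi)]-J^\star$ is immediate, because $\widetilde x_N\in X$ and $J^\star$ is the infimum over $X$.

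For the upper bound, work on the coverage event $\{\dW(P,\Ptil)\le\varepsilon_N\}$, where $P\in B_{\varepsilon_N}(\Ptil)$. As in the proof of Theorem \ref{thm:pred-finite}, this gives $\E_P[h(\widetilde x_N,\xi)]\le\widetilde J_N(\varepsilon_N)$. Next, bound $\widetilde J_N(\varepsilon_N)$ above by testing a near-optimal true decision. Fix $\delta>0$ and pick $x_\delta\in X$ with $\E_P[h(x_\delta,\xi)]\le J^\star+\delta$. Take any $Q\in B_{\varepsilon_N}(\Ptil)$. The triangle inequality gives
\begin{equation*}
\dW(Q,P)\le\dW(Q,\Ptil)+\dW(\Ptil,P)\le\varepsilon_N+r_N.
\end{equation*}
Combined with the Lipschitz transfer bound, this yields
\begin{equation*}
\E_Q[h(x_\delta,\xi)]\le\E_P[h(x_\delta,\xi)]+L(\varepsilon_N+r_N)\le J^\star+\delta+L(\varepsilon_N+r_N).
\end{equation*}
Take the supremum over $Q$, then use $\widetilde J_N(\varepsilon_N)\le\sup_{Q\in B_{\varepsilon_N}(\Ptil)}\E_Q[h(x_\delta,\xi)]$, and finally let $\delta\downarrow0$. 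This gives $\widetilde J_N(\varepsilon_N)\le J^\star+L(\varepsilon_N+r_N)$, and chaining with the previous paragraph yields \eqref{eq:general-regret-pred}.

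The two closing remarks follow directly. If $r_N\le\varepsilon_N$, then $\dW(P,\Ptil)\le r_N\le\varepsilon_N$ deterministically, so the coverage event holds. Setting $\varepsilon_N=r_N$ in \eqref{eq:general-regret-pred} gives \eqref{eq:2L-regret-pred}.

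The only delicate point is integrability. The expectations must be finite for the Kantorovich--Rubinstein step to apply. This holds because every measure involved lies in $\cM(\XiSet)$, which has finite first moments, and because Lipschitz continuity of $h(x,\cdot)$ implies linear growth. Nothing here requires existence of a minimizer of the true problem, since the $\delta$-argument avoids it.
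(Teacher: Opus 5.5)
Your proposal is correct and follows essentially the same route as the paper's proof. You get the lower bound from the definition of $J^\star$ and the coverage sandwich $\E_P[h(\widetilde x_N,\xi)]\le\widetilde J_N(\varepsilon_N)$, then test a $\delta$-optimal true decision $x_\delta$ against every $Q\in B_{\varepsilon_N}(\Ptil)$ using the triangle inequality and the Kantorovich--Rubinstein/Lipschitz transfer, and let $\delta\downarrow0$; your remarks on integrability and on the special cases $r_N\le\varepsilon_N$ and $\varepsilon_N=r_N$ are accurate details the paper leaves implicit.
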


\begin{proof}
The lower bound follows from the definition of $J^\star$. On the coverage event,
\begin{equation}
    \E_P[h(\widetilde x_N,\xi)]\le \widetilde J_N(\varepsilon_N).
\end{equation}
Fix $\delta>0$ and choose a $\delta$-optimal true decision $x_\delta\in X$ such that $\E_P[h(x_\delta,\xi)]\le J^\star+\delta$. By optimality of $\widetilde x_N$,
\begin{equation}
    \widetilde J_N(\varepsilon_N)
    \le \sup_{Q\in B_{\varepsilon_N}(\Ptil)}\E_Q[h(x_\delta,\xi)].
\end{equation}
For any such $Q$,
\begin{equation}
    \dW(Q,P)\le \dW(Q,\Ptil)+\dW(\Ptil,P)
    \le\varepsilon_N+r_N.
\end{equation}
By Kantorovich--Rubinstein duality and Lipschitzness,
\begin{equation}
    \E_Q[h(x_\delta,\xi)]\le\E_P[h(x_\delta,\xi)]+L\dW(Q,P)
    \le J^\star+\delta+L(\varepsilon_N+r_N).
\end{equation}
Take the supremum over $Q$ and then let $\delta\downarrow0$.
\end{proof}

\begin{corollary}[Improved confidence-level regret bound under radius dominance]\label{cor:improved-regret}
If $\rpred(\beta)\le\remp(\beta)$, then under Assumption \ref{ass:lipschitz}, using the respective $(1-\beta)$ confidence radii gives the predictive high-probability regret bound
\begin{equation}
    2L\rpred(\beta),
\end{equation}
which is no larger than the empirical-center bound $2L\remp(\beta)$. Each bound applies on its corresponding coverage event.
\end{corollary}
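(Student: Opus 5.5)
The corollary follows by applying Theorem \ref{thm:pred-regret} twice, once to each center, and then comparing the resulting constants via the hypothesis $\rpred(\beta)\le\remp(\beta)$.

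\emph{Predictive center.} I take $\varepsilon_N=r_N=\rpred(\beta)$. Consider the coverage event $\{\dW(P,\Ptil)\le\rpred(\beta)\}$. On this event the bound $\dW(P,\Ptil)\le r_N$ required by Theorem \ref{thm:pred-regret} holds, and so does the coverage event in that theorem. Hence \eqref{eq:2L-regret-pred} gives
\[
0\le\E_P[h(\widetilde x_N,\xi)]-J^\star\le 2L\rpred(\beta).
\]

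To call this a high-probability bound, I need the event to have probability at least $1-\beta$. The map $r\mapsto P^N(R_N^{\mathrm{pred}}\le r)$ is a distribution function, hence right-continuous, so the infimum in \eqref{eq:rpred} is attained and $P^N(R_N^{\mathrm{pred}}\le\rpred(\beta))\ge1-\beta$. The same argument applies to $\remp(\beta)$.

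\emph{Empirical center.} $\Phat$ is a discrete law of the form \eqref{eq:general-nominal} with uniform weights. Theorem \ref{thm:pred-regret} only uses Lipschitzness and the triangle inequality, not any property of the center, so it applies verbatim with $\Ptil$ replaced by $\Phat$. Taking $\varepsilon_N=r_N=\remp(\beta)$ gives the bound $2L\remp(\beta)$ on the event $\{\dW(P,\Phat)\le\remp(\beta)\}$. By the same right-continuity argument, this event has probability at least $1-\beta$.

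\emph{Comparison.} Multiplying the hypothesis $\rpred(\beta)\le\remp(\beta)$ by $2L>0$ gives $2L\rpred(\beta)\le2L\remp(\beta)$. The hypothesis holds in particular under \eqref{eq:pred-closer-a-s} by Corollary \ref{cor:radius-dom}.

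I expect no real obstacle. The only point needing care is attainment of the infimum in the radius definitions. If one prefers to avoid the right-continuity argument, one can instead use radii $\rpred(\beta)+\eta$ and $\remp(\beta)+\eta$ for small $\eta>0$, which give bounds $2L(\rpred(\beta)+\eta)$ and $2L(\remp(\beta)+\eta)$, and then let $\eta\downarrow0$. It should also be stated explicitly that the two bounds hold on different events, so the corollary compares guarantee levels rather than realized regrets.
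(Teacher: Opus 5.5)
Your proposal is correct and matches the paper's intended argument. The paper gives no separate proof; the corollary is meant to follow at once from Theorem~\ref{thm:pred-regret} with $\varepsilon_N=r_N$ set to each center's confidence radius (the empirical case carries over verbatim, since that proof never uses the structure of the center), after which $\rpred(\beta)\le\remp(\beta)$ is multiplied by $2L$. Your right-continuity argument, showing that the infimum in \eqref{eq:rpred} is attained so that each coverage event has probability at least $1-\beta$, fills in a detail the paper leaves implicit.
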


\begin{theorem}[Certificate gap]\label{thm:cert-gap}
Under Assumption \ref{ass:lipschitz}, if $\dW(P,\Ptil)\le r_N$, then
\begin{equation}
    \widetilde J_N(\varepsilon_N)\le J^\star+L(\varepsilon_N+r_N).
\end{equation}
If $r_N\le\varepsilon_N$, then $J^\star\le \widetilde J_N(\varepsilon_N)$, and hence
\begin{equation}
    0\le \widetilde J_N(\varepsilon_N)-J^\star\le L(\varepsilon_N+r_N).
\end{equation}
In particular, if $\varepsilon_N=r_N$, then
\begin{equation}
    0\le \widetilde J_N(r_N)-J^\star\le2Lr_N.
\end{equation}
\end{theorem}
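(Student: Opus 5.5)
The plan is to split the statement into an upper bound that holds whenever $\dW(P,\Ptil)\le r_N$, and a lower bound that needs the additional condition $r_N\le\varepsilon_N$. The argument reuses the second half of the proof of Theorem \ref{thm:pred-regret}. The only difference is that the upper bound is now on the certificate $\widetilde J_N(\varepsilon_N)$ itself, not on the out-of-sample cost of $\widetilde x_N$. So no coverage event is needed for the upper bound.

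\textbf{Upper bound.} Fix $\delta>0$ and pick $x_\delta\in X$ with $\E_P[h(x_\delta,\xi)]\le J^\star+\delta$. Since $\widetilde J_N(\varepsilon_N)$ is an infimum over $x$, we have $\widetilde J_N(\varepsilon_N)\le\sup_{Q\in B_{\varepsilon_N}(\Ptil)}\E_Q[h(x_\delta,\xi)]$. For any $Q$ in this ball, the triangle inequality for $\dW$ gives $\dW(Q,P)\le\varepsilon_N+r_N$. Assumption \ref{ass:lipschitz} makes $h(x_\delta,\cdot)/L$ a member of $\mathrm{Lip}_1$, so Kantorovich--Rubinstein duality \eqref{eq:KR} yields
\[
\E_Q[h(x_\delta,\xi)]\le\E_P[h(x_\delta,\xi)]+L\dW(Q,P)\le J^\star+\delta+L(\varepsilon_N+r_N).
\]
Taking the supremum over $Q$ and then letting $\delta\downarrow0$ gives the first inequality. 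Integrability of $h(x_\delta,\cdot)$ under $Q$ follows from Lipschitzness and finite first moments, since $\cM(\XiSet)$ consists of laws with finite first moment.

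\textbf{Lower bound.} If $r_N\le\varepsilon_N$, then $\dW(P,\Ptil)\le\varepsilon_N$, so $P\in B_{\varepsilon_N}(\Ptil)$. Hence for every $x\in X$,
\[
\E_P[h(x,\xi)]\le\sup_{Q\in B_{\varepsilon_N}(\Ptil)}\E_Q[h(x,\xi)].
\]
Taking the infimum over $x$ on both sides gives $J^\star\le\widetilde J_N(\varepsilon_N)$. This step does not require an optimizer to exist, which is preferable to routing through $\widetilde x_N$. Combining the two bounds gives the sandwich, and setting $\varepsilon_N=r_N$ gives the $2Lr_N$ case.

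There is no real obstacle here. The only point needing care is the use of Kantorovich--Rubinstein duality for the possibly unbounded function $h(x_\delta,\cdot)$. This is handled by the linear growth that Lipschitzness implies, together with the finite first moments.
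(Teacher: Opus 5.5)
Your proposal is correct and follows the paper's proof essentially step for step: the upper bound evaluates the robust objective at a $\delta$-optimal true decision and applies the triangle inequality and Kantorovich--Rubinstein duality, and the lower bound notes $P\in B_{\varepsilon_N}(\Ptil)$ and takes the infimum over $x$. Your remark on the integrability of $h(x_\delta,\cdot)$ under finite first moments makes explicit a detail the paper leaves implicit.
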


\begin{proof}
The upper bound follows by evaluating the robust objective at a $\delta$-optimal true decision as in the proof of Theorem \ref{thm:pred-regret} and then letting $\delta\downarrow0$. If $r_N\le\varepsilon_N$, then $P\in B_{\varepsilon_N}(\Ptil)$, so for every $x$,
\begin{equation}
    \sup_{Q\in B_{\varepsilon_N}(\Ptil)}\E_Q[h(x,\xi)]\ge \E_P[h(x,\xi)].
\end{equation}
Taking the infimum over $x$ gives $\widetilde J_N(\varepsilon_N)\ge J^\star$.
\end{proof}

\subsection{Stability with respect to the center and radius}

For a nominal law $\mu\in\cM(\XiSet)$ and radius $r\ge0$, define
\begin{align}
\phi_x(\mu,r)&=\sup_{Q:\dW(Q,\mu)\le r}\E_Q[h(x,\xi)],\\
V(\mu,r)&=\inf_{x\in X}\phi_x(\mu,r).
\end{align}

\begin{lemma}[Lipschitz dependence on the radius]\label{lem:radius-lipschitz}
Under Assumption \ref{ass:lipschitz}, for every $x\in X$, $\mu\in\cM(\XiSet)$, and $0\le r\le s$,
\begin{equation}
    0\le \phi_x(\mu,s)-\phi_x(\mu,r)\le L(s-r).
\end{equation}
\end{lemma}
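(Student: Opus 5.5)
The lower bound comes from nesting of the balls. The upper bound comes from comparing each feasible distribution of the larger ball with a nearby distribution of the smaller ball. Lipschitz continuity then controls the difference in expected loss, via Kantorovich--Rubinstein duality.

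\emph{Lower bound.} For $r\le s$ we have $B_r(\mu)\subseteq B_s(\mu)$. The supremum over the larger set is therefore at least the supremum over the smaller one, so $\phi_x(\mu,s)\ge\phi_x(\mu,r)$. This holds whether the values are finite or not. Under Assumption \ref{ass:lipschitz}, $h(x,\cdot)$ grows at most linearly, so both suprema are finite on $\cM(\XiSet)$ and the difference is well defined.

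\emph{Upper bound.} Fix $Q\in B_s(\mu)$. If $\dW(Q,\mu)\le r$, then $Q$ is already feasible for radius $r$ and nothing needs to be done. Otherwise $d:=\dW(Q,\mu)\in(r,s]$, and I would take the mixture
\[
Q_t=(1-t)\mu+tQ,\qquad t=r/d\in[0,1).
\]
This is the main step. Convexity of $\dW(\cdot,\mu)$ gives $\dW(Q_t,\mu)\le t\,d=r$; concretely, mix an optimal (or $\eta$-optimal) coupling of $(Q,\mu)$ with the diagonal coupling of $(\mu,\mu)$. Similarly, $\dW(Q,Q_t)\le(1-t)\dW(Q,\mu)=d-r\le s-r$, by mixing the coupling of $(Q,\mu)$ with the identity coupling of $(Q,Q)$. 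Also $Q_t$ is supported on $\XiSet$ and has a finite first moment, so $Q_t\in\cM(\XiSet)$.

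Kantorovich--Rubinstein duality \eqref{eq:KR} applied to $h(x,\cdot)/L\in\mathrm{Lip}_1$ then gives
\[
\E_Q[h(x,\xi)]\le \E_{Q_t}[h(x,\xi)]+L(s-r)\le \phi_x(\mu,r)+L(s-r).
\]
Taking the supremum over $Q\in B_s(\mu)$ yields the claim.

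The only delicate point is attainment of optimal couplings. I would avoid relying on it by using $\eta$-optimal couplings, which give $\dW(Q_t,\mu)\le t(d+\eta)$. To stay inside $B_r(\mu)$ I would then use $t=r/(d+\eta)$ instead. Since $t$ is now smaller, the bound on the other distance worsens slightly, to $\dW(Q,Q_t)\le(1-t)(d+\eta)=d+\eta-r\le s-r+\eta$. Finally let $\eta\downarrow0$. Alternatively, existence of optimal couplings is standard for finite-first-moment measures on a closed $\XiSet$, and one can simply cite it.
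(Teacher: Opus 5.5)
Your proof is correct and follows the paper's own argument. You shrink each $Q\in B_s(\mu)$ toward the center by mixing with $\mu$, certify membership in $B_r(\mu)$ by mixing a (near-)optimal coupling with the diagonal one, and pay at most $L(s-r)$ in expected loss by Lipschitz continuity. Two differences are cosmetic: the paper uses the uniform weight $\theta=r/s$ and bounds $\E_Q[h]-\E_{Q'}[h]=(1-\theta)(\E_Q[h]-\E_\mu[h])$ directly, whereas you use the per-$Q$ weight $r/\dW(Q,\mu)$ and bound $\dW(Q,Q_t)$. Your $\eta$-reparametrization is also unnecessary, since for fixed $t$ the bound $\dW(Q_t,\mu)\le t(d+\eta)$ holding for every $\eta>0$ already gives $\dW(Q_t,\mu)\le td$.
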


\begin{proof}
Monotonicity follows from inclusion of Wasserstein balls. If $s=0$, then $r=0$ and the claim is immediate. Assume $s>0$ and set $\theta=r/s$. Fix $Q\in B_s(\mu)$ and define the mixture
\begin{equation}
    Q'=\theta Q+(1-\theta)\mu.
\end{equation}
Mixing an optimal (or arbitrarily near-optimal) coupling between $Q$ and $\mu$ with the identity coupling of $\mu$ shows that
\begin{equation}
    \dW(Q',\mu)\le\theta\dW(Q,\mu)\le r,
\end{equation}
so $Q'\in B_r(\mu)$. By the Kantorovich--Rubinstein inequality and Assumption \ref{ass:lipschitz},
\begin{align}
\E_Q[h(x,\xi)]-\E_{Q'}[h(x,\xi)]
&=(1-\theta)\bigl(\E_Q[h(x,\xi)]-\E_\mu[h(x,\xi)]\bigr)\notag\\
&\le (1-\theta)L\dW(Q,\mu)\le L(s-r).
\end{align}
Thus $\E_Q[h(x,\xi)]\le\phi_x(\mu,r)+L(s-r)$. Taking the supremum over $Q\in B_s(\mu)$ proves the result.
\end{proof}

\begin{theorem}[Joint center--radius stability]\label{thm:center-radius-stability}
Under Assumption \ref{ass:lipschitz}, for any $\mu,\nu\in\cM(\XiSet)$ and $r,s\ge0$,
\begin{equation}\label{eq:center-radius-stability}
    |V(\mu,r)-V(\nu,s)|
    \le L\bigl(\dW(\mu,\nu)+|r-s|\bigr).
\end{equation}
The same bound holds with $V$ replaced by $\phi_x$ for any fixed $x$.
\end{theorem}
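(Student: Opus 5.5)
Our plan is to prove the bound for $\phi_x$ at a fixed $x$ first and then pass to $V$ by taking infima. We use the triangle inequality to split the perturbation into a pure radius change and a pure center change:
\[
|\phi_x(\mu,r)-\phi_x(\nu,s)|\le|\phi_x(\mu,r)-\phi_x(\mu,s)|+|\phi_x(\mu,s)-\phi_x(\nu,s)|.
\]
Lemma \ref{lem:radius-lipschitz} bounds the first term by $L|r-s|$, after ordering $r$ and $s$. It therefore remains to show that, for a fixed radius $s$,
\[
|\phi_x(\mu,s)-\phi_x(\nu,s)|\le L\,\dW(\mu,\nu).
\]

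For the center step, write $d=\dW(\mu,\nu)$. The crude route uses the triangle inequality for $\dW$: every $Q\in B_s(\mu)$ satisfies $Q\in B_{s+d}(\nu)$, so $\phi_x(\mu,s)\le\phi_x(\nu,s+d)$. Lemma \ref{lem:radius-lipschitz} then gives $\phi_x(\nu,s+d)\le\phi_x(\nu,s)+Ld$. Exchanging the roles of $\mu$ and $\nu$ yields the reverse inequality, and together these give the required center bound.

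Combining the two steps gives $|\phi_x(\mu,r)-\phi_x(\nu,s)|\le L(\dW(\mu,\nu)+|r-s|)$. This estimate is uniform in $x$. Since
\[
|\inf_x f(x)-\inf_x g(x)|\le\sup_x|f(x)-g(x)|
\]
whenever both infima are finite, the bound transfers to $V$. If $V(\mu,r)=-\infty$, the uniform bound forces $V(\nu,s)=-\infty$ as well, and we interpret the difference by the usual convention. We will also remark that finite first moments keep every $\phi_x$ finite under Assumption \ref{ass:lipschitz}, because $h(x,\cdot)$ grows at most linearly. This finiteness is needed for the subtractions to make sense.

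The step we expect to be the main obstacle is the center comparison. The crude route above is valid but only loosely tight: it enlarges the ball before shrinking it back. We prefer a sharper argument that moves each $Q\in B_s(\mu)$ to some $Q'\in B_s(\nu)$ with $\E_Q h-\E_{Q'}h\le L d$. One such construction glues an optimal coupling of $(Q,\mu)$ with an optimal coupling of $(\mu,\nu)$ and takes $Q'$ as the resulting transported law. However, bounding $\dW(Q',\nu)$ by $s$ needs care, so the triangle-inequality argument with Lemma \ref{lem:radius-lipschitz} serves as the fallback. It already yields exactly the constant $L$ in \eqref{eq:center-radius-stability}, because the radius lemma converts the extra radius $d$ into $Ld$ with no loss.
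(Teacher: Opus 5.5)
Your proposal is correct and follows essentially the paper's argument: the inclusion $B_r(\mu)\subseteq B_{r+d}(\nu)$ from the triangle inequality, Lemma~\ref{lem:radius-lipschitz} to convert the extra radius $d$ into $Ld$, symmetry in $(\mu,r)\leftrightarrow(\nu,s)$, and uniformity in $x$ to pass to $V$. The differences are only bookkeeping---you pass through the intermediate pair $(\mu,s)$ instead of the paper's single comparison $\phi_x(\mu,r)\le\phi_x(\nu,r+d)$ with a case split on whether $r+d\ge s$---plus your finiteness and $-\infty$ remarks, which make explicit a point the paper leaves implicit; the optional gluing route is not needed.
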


\begin{proof}
Let $d=\dW(\mu,\nu)$. The triangle inequality implies
\begin{equation}
B_r(\mu)\subseteq B_{r+d}(\nu),
\end{equation}
so $\phi_x(\mu,r)\le\phi_x(\nu,r+d)$. If $r+d\ge s$, Lemma \ref{lem:radius-lipschitz} gives
\begin{equation}
\phi_x(\mu,r)-\phi_x(\nu,s)
\le L(r+d-s)\le L\bigl(d+|r-s|\bigr).
\end{equation}
If $r+d<s$, monotonicity instead gives $\phi_x(\mu,r)\le\phi_x(\nu,s)$, so the same upper bound holds. Interchanging $(\mu,r)$ and $(\nu,s)$ proves the fixed-decision bound. Since the bound is uniform in $x$, taking infima preserves it and proves \eqref{eq:center-radius-stability}.
\end{proof}

\begin{corollary}[Empirical versus predictive certificates]\label{cor:center-stability}
At a common radius $r$,
\begin{equation}
    |\widetilde J_N(r)-\widehat J_N(r)|
    \le L\dW(\Ptil,\Phat),
\end{equation}
where $\widehat J_N(r)$ denotes empirical-center DRO and $\widetilde J_N(r)$ denotes predictive-center DRO.
\end{corollary}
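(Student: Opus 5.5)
This corollary is the special case $s=r$ of Theorem \ref{thm:center-radius-stability}. I will identify the two certificates with values of the function $V$ introduced before Lemma \ref{lem:radius-lipschitz}, and then read off the bound.

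\textbf{Identifying the certificates with $V$.} By definition \eqref{eq:empirical-dro}, $\widehat J_N(r)=\inf_{x\in X}\sup_{Q\in B_r(\Phat)}\E_Q[h(x,\xi)]$, which is exactly $V(\Phat,r)$. Likewise \eqref{eq:pred-dro} gives $\widetilde J_N(r)=V(\Ptil,r)$. Both nominal laws are finitely supported, so they belong to $\cM(\XiSet)$, as the theorem requires. I will work under Assumption \ref{ass:lipschitz}, which the statement uses implicitly through the constant $L$.

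\textbf{Applying the theorem.} Fix a dataset realization. Apply Theorem \ref{thm:center-radius-stability} with $\mu=\Ptil$, $\nu=\Phat$ and $s=r$. The term $|r-s|$ vanishes, leaving $|V(\Ptil,r)-V(\Phat,r)|\le L\,\dW(\Ptil,\Phat)$, which is the claimed inequality. Since the bound holds for every realization, it holds almost surely as a statement about the random certificates.

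\textbf{Possible obstacle.} The only real concern is whether $V$ can take infinite values, which would make the difference $\widetilde J_N(r)-\widehat J_N(r)$ ill-defined. The Lipschitz assumption settles this. For each $x$, $h(x,\cdot)$ grows at most linearly, so $\phi_x(\mu,r)\le\E_\mu[h(x,\xi)]+Lr<\infty$ whenever $\mu$ has a finite first moment. The lower-bound direction needs no extra work: the theorem's proof bounds $\phi_x(\mu,r)-\phi_x(\nu,s)$ uniformly in $x$, and taking the infimum over $x$ preserves that bound even if both values equal $-\infty$. In that degenerate case the corollary is read with the convention $-\infty-(-\infty)=0$.
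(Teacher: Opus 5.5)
Your proposal is correct and is exactly the intended argument: the paper gives no separate proof and treats the corollary as the case $\mu=\Ptil$, $\nu=\Phat$, $s=r$ of Theorem~\ref{thm:center-radius-stability}, under Assumption~\ref{ass:lipschitz}. Your finiteness remark is a careful detail the paper leaves implicit: $V$ is bounded above via the Lipschitz estimate, and because the bound on $\phi_x$ is uniform in $x$, $V(\Ptil,r)$ and $V(\Phat,r)$ are either both finite or both $-\infty$.
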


\subsection{Learning and scenario-discretization errors}

Suppose a predictive model outputs a probability law $\widehat G_N=G_{\widehat\theta_N}$ from a model class $\mathcal G=\{G_\theta:\theta\in\Theta\}$. Let $\theta^\star\in\argmin_{\theta\in\Theta}\dW(P,G_\theta)$ when it exists. The triangle inequality gives
\begin{equation}\label{eq:error-decomp}
    \dW(P,\widehat G_N)
    \le \underbrace{\inf_{\theta\in\Theta}\dW(P,G_\theta)}_{\text{approximation error}}
    +\underbrace{\dW(G_{\theta^\star},G_{\widehat\theta_N})}_{\text{estimation error}}.
\end{equation}
For optimization, a continuous $\widehat G_N$ is often replaced by a finite scenario law $\Ptil$. This creates a third error term.

\begin{proposition}[Additive scenario-discretization error]\label{prop:scenario-error}
For every predictive law $\widehat G_N$ and discrete approximation $\Ptil$,
\begin{equation}\label{eq:scenario-error}
    \dW(P,\Ptil)
    \le \dW(P,\widehat G_N)+\dW(\widehat G_N,\Ptil).
\end{equation}
If random bounds $r_N^{\mathrm{model}}$ and $r_{N,M}^{\mathrm{scen}}$ satisfy
\begin{align}
\Prob\{\dW(P,\widehat G_N)\le r_N^{\mathrm{model}}\}&\ge1-\beta_1,\\
\Prob\{\dW(\widehat G_N,\Ptil)\le r_{N,M}^{\mathrm{scen}}\}&\ge1-\beta_2,
\end{align}
then the radius $r_N^{\mathrm{model}}+r_{N,M}^{\mathrm{scen}}$ covers $P$ around $\Ptil$ with probability at least $1-\beta_1-\beta_2$.
\end{proposition}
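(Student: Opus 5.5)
The plan has two parts. The first inequality \eqref{eq:scenario-error} is the triangle inequality for $\dW$ on $\cM(\XiSet)$, applied pointwise on each realization of the data and of any auxiliary scenario-generation randomness. To justify it, I will fix $\delta>0$ and take $\delta$-optimal couplings $\Pi_1\in\Gamma(P,\widehat G_N)$ and $\Pi_2\in\Gamma(\widehat G_N,\Ptil)$. I will glue them along the common marginal $\widehat G_N$; the gluing lemma applies because $\XiSet\subseteq\R^m$ is Polish. This produces a law on $\XiSet^3$, and its $(1,3)$-marginal is a coupling of $P$ and $\Ptil$. The inequality $\norm{\xi-\zeta}\le\norm{\xi-\eta}+\norm{\eta-\zeta}$ then bounds the transport cost by $\dW(P,\widehat G_N)+\dW(\widehat G_N,\Ptil)+2\delta$. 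Letting $\delta\downarrow0$ gives \eqref{eq:scenario-error}. Alternatively, one can cite the metric property of $\dW$ from \citet{villani2009optimal}. Finite first moments of all three laws are needed so that every distance is finite; this is implicit in the standing assumption that they lie in $\cM(\XiSet)$.

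For the probabilistic claim, I will define $E_1=\{\dW(P,\widehat G_N)\le r_N^{\mathrm{model}}\}$ and $E_2=\{\dW(\widehat G_N,\Ptil)\le r_{N,M}^{\mathrm{scen}}\}$ on the enlarged probability space described in the preliminaries, so that $\Prob$ accounts for both the sample and the scenario randomness. On $E_1\cap E_2$, \eqref{eq:scenario-error} gives $\dW(P,\Ptil)\le r_N^{\mathrm{model}}+r_{N,M}^{\mathrm{scen}}$, which means $P\in B_{r_N^{\mathrm{model}}+r_{N,M}^{\mathrm{scen}}}(\Ptil)$. The union bound then gives
\[
\Prob(E_1\cap E_2)\ge 1-\Prob(E_1^c)-\Prob(E_2^c)\ge1-\beta_1-\beta_2.
\]
No independence between the two error sources is required, which is why the confidence levels add.

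I expect the only delicate step to be measurability: the events $E_1$, $E_2$, and the coverage event must be measurable so that their probabilities are defined. I will handle this by assuming, as elsewhere in the paper, that the random laws and radii are measurable, so that the Wasserstein distances are random variables. If needed, outer probability can replace $\Prob$, since the union bound still holds for outer measure. As a final remark, combining this result with Theorem \ref{thm:pred-finite} immediately gives a $(1-\beta_1-\beta_2)$ certificate for the scenario-based DRO.
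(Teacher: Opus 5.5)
Your proposal is correct and matches the paper's proof. The inequality is the triangle inequality for $\dW$. On the intersection of the two events the right-hand side is at most $r_N^{\mathrm{model}}+r_{N,M}^{\mathrm{scen}}$, and the union bound gives probability at least $1-\beta_1-\beta_2$. Your gluing-lemma justification and measurability remarks add detail the paper omits; the only nit is that an arbitrary $\XiSet\subseteq\R^m$ need not be Polish, but you can do the gluing in $\R^m$ itself, so nothing changes.
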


\begin{proof}
Equation \eqref{eq:scenario-error} is the triangle inequality. On the intersection of the two stated events, the right-hand side is at most $r_N^{\mathrm{model}}+r_{N,M}^{\mathrm{scen}}$. The probability statement follows from the union bound.
\end{proof}

\begin{remark}
When $\Ptil$ is formed from $M$ i.i.d. draws from $\widehat G_N$, the scenario term is itself an empirical Wasserstein error conditional on the fitted model. It should either be bounded theoretically, estimated on an independent simulation sample, or made negligible through a convergence study in $M$. Treating generated scenarios as if they were the exact predictive law can lead to systematic undercoverage.
\end{remark}

A statistical generalization bound for $\widehat G_N$, combined with a bound for scenario discretization, can therefore be transferred directly to DRO through Theorems \ref{thm:pred-finite}, \ref{thm:pred-consistency}, and \ref{thm:pred-regret}.

\section{Learned Predictive Radii}\label{sec:learned-radius}

We now make the radius itself random and learned. Let
\begin{equation}
    \Pbar\in\{\Phat,\Ptil\}
\end{equation}
be a generic nominal law. Let
\begin{equation}\label{eq:random-radius}
    \epshat=g_\theta(Z_N)
\end{equation}
be a learned radius, where $g_\theta$ is constrained to take values in $\R_+$ and $Z_N$ contains dataset-level features. Define
\begin{equation}\label{eq:learned-radius-dro}
    \widehat J_N=
    \inf_{x\in X}\sup_{Q\in B_{\epshat}(\Pbar)}\E_Q[h(x,\xi)],
\end{equation}
with optimizer $\widehat x_N$.

Let
\begin{equation}\label{eq:Rn-generic}
    R_N=\dW(P,\Pbar)
\end{equation}
be the true Wasserstein error of the nominal center.

\subsection{Finite-sample validity of learned radii}

\begin{theorem}[Learned-radius finite-sample guarantee]\label{thm:learned-radius-finite}
If
\begin{equation}\label{eq:learned-radius-coverage}
    P^N\{R_N\le \epshat\}\ge1-\beta,
\end{equation}
then
\begin{equation}
    P^N\left\{\E_P[h(\widehat x_N,\xi)]\le \widehat J_N\right\}\ge1-\beta.
\end{equation}
\end{theorem}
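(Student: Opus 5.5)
The plan is to reuse the coverage-event argument from Theorems \ref{thm:emp-guarantee} and \ref{thm:pred-finite}, now applied pathwise with a random radius. Define the event
\[
\mathcal E_N=\{R_N\le\epshat\}=\{\dW(P,\Pbar)\le\epshat\},
\]
which has $P^N$-probability at least $1-\beta$ by \eqref{eq:learned-radius-coverage}. Both $\Pbar$ and $\epshat=g_\theta(Z_N)$ are functions of the data (and any auxiliary randomization, absorbed into the enlarged probability space as stated in the preliminaries). Hence $\mathcal E_N$ is an event on that space, and the ball $B_{\epshat}(\Pbar)$ is a fixed set once the data are realized.

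Fix a realization in $\mathcal E_N$. By definition of the Wasserstein ball, $P\in B_{\epshat}(\Pbar)$. Therefore, for the realized decision $\widehat x_N$,
\[
\E_P[h(\widehat x_N,\xi)]\le\sup_{Q\in B_{\epshat}(\Pbar)}\E_Q[h(\widehat x_N,\xi)]=\widehat J_N,
\]
where the equality is optimality of $\widehat x_N$ in \eqref{eq:learned-radius-dro}. This inclusion of events, $\mathcal E_N\subseteq\{\E_P[h(\widehat x_N,\xi)]\le\widehat J_N\}$, then gives the probability bound by monotonicity of $P^N$.

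The only point needing care, and the mild obstacle, is that $\E_P[h(\widehat x_N,\xi)]$ is the out-of-sample cost conditional on the data. It should be read as $\E_P[h(x,\xi)]$ evaluated at $x=\widehat x_N$, with the expectation over a fresh $\xi\sim P$ independent of the sample; it must not be treated as an expectation over the joint law. Measurability of the target event is assumed implicitly, as in the earlier theorems; otherwise one states the result with inner probability. No independence between $\epshat$ and $R_N$ is required, because the argument is purely pathwise on $\mathcal E_N$.
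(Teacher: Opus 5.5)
Your proposal is correct and follows the paper's proof: on the coverage event $\{R_N\le\epshat\}$ you have $P\in B_{\epshat}(\Pbar)$, the out-of-sample cost at $\widehat x_N$ is bounded by the worst-case expectation over that ball, this equals $\widehat J_N$ by optimality, and the event inclusion transfers the probability bound. Your remarks on measurability and on reading $\E_P[h(\widehat x_N,\xi)]$ as a conditional out-of-sample cost only make explicit what the paper assumes implicitly.
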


\begin{proof}
On the event $\{R_N\le\epshat\}$, $P\in B_{\epshat}(\Pbar)$. Therefore
\begin{equation}
\E_P[h(\widehat x_N,\xi)]\le
\sup_{Q\in B_{\epshat}(\Pbar)}\E_Q[h(\widehat x_N,\xi)]
=\widehat J_N.
\end{equation}
The event has probability at least $1-\beta$.
\end{proof}

\begin{corollary}[Reliability is lower bounded by radius coverage]\label{cor:eta}
Define the undercoverage probability
\begin{equation}
    \eta_N=P^N\{R_N>\epshat\}.
\end{equation}
Then
\begin{equation}
    P^N\left\{\E_P[h(\widehat x_N,\xi)]\le\widehat J_N\right\}
    \ge 1-\eta_N.
\end{equation}
\end{corollary}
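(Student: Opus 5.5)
The plan is to reuse the event-inclusion argument from the proof of Theorem \ref{thm:learned-radius-finite}, but with the coverage probability $1-\beta$ replaced by the exact coverage probability $1-\eta_N$. Define the coverage event
\begin{equation*}
    \mathcal C_N=\{R_N\le\epshat\},
\end{equation*}
whose complement has probability $\eta_N$ by definition. Hence $P^N(\mathcal C_N)=1-\eta_N$.

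The key step is to show the event inclusion
\begin{equation*}
    \mathcal C_N\subseteq\left\{\E_P[h(\widehat x_N,\xi)]\le\widehat J_N\right\}.
\end{equation*}
On $\mathcal C_N$ we have $\dW(P,\Pbar)\le\epshat$, so $P\in B_{\epshat}(\Pbar)$. Therefore $\E_P[h(\widehat x_N,\xi)]$ is bounded by the inner supremum over the ball evaluated at $\widehat x_N$. By optimality of $\widehat x_N$ in \eqref{eq:learned-radius-dro}, this supremum equals $\widehat J_N$. This is exactly the pointwise argument already given for Theorem \ref{thm:learned-radius-finite}.

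Monotonicity of $P^N$ then gives the bound $1-\eta_N$. Equivalently, one may invoke Theorem \ref{thm:learned-radius-finite} directly with $\beta=\eta_N$, since its hypothesis \eqref{eq:learned-radius-coverage} holds with equality for that choice. The hypothesis that $\eta_N<1$ or $\eta_N\in(0,1)$ is unnecessary, because the inclusion argument does not use the range of $\beta$.

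The only point needing care is measurability. Both $R_N$ and $\epshat$ must be random variables, so that $\eta_N$ and the reliability event are well defined. I would handle this exactly as the paper does elsewhere, either by assuming it implicitly or by reading the probabilities as inner probabilities. The inclusion argument goes through unchanged in either case.
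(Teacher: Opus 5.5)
Your proposal is correct and matches the paper's approach. The paper gives no separate proof because the corollary is Theorem~\ref{thm:learned-radius-finite} applied with $\beta=\eta_N$, which rests on exactly the event inclusion $\{R_N\le\epshat\}\subseteq\{\E_P[h(\widehat x_N,\xi)]\le\widehat J_N\}$ that you verify; your remarks on the range of $\eta_N$ and on measurability are harmless refinements the paper leaves implicit.
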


\subsection{Comparison with measure-concentration radius}

Assume for this subsection that the learned radius and the measure-concentration radius use the same nominal center $\Pbar$. This is immediate when $\Pbar=\Phat$. If $\Pbar=\Ptil$, then a separate concentration or calibration bound for the predictive center is required.

Let $J_N^{\mathrm{mc}}$ denote the DRO value obtained with deterministic radius $\epsmc(\beta)$:
\begin{equation}
    J_N^{\mathrm{mc}}=\inf_{x\in X}\sup_{Q\in B_{\epsmc(\beta)}(\Pbar)}\E_Q[h(x,\xi)].
\end{equation}

\begin{theorem}[Same validity and lower certificate]\label{thm:same-valid-lower-cert}
Assume
\begin{equation}
    P^N\{R_N\le\epshat\}\ge1-\beta
\end{equation}
and
\begin{equation}
    \epshat\le \epsmc(\beta)
\end{equation}
almost surely. Then
\begin{equation}
    \widehat J_N\le J_N^{\mathrm{mc}}
\end{equation}
almost surely, and
\begin{equation}
    P^N\left\{\E_P[h(\widehat x_N,\xi)]\le\widehat J_N\right\}\ge1-\beta.
\end{equation}
\end{theorem}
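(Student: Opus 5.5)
The argument has two independent parts: a pathwise monotonicity claim for the optimal values, and a probabilistic validity claim. Neither needs anything beyond ball inclusion and the earlier learned-radius guarantee.

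\emph{Certificate comparison.} Fix a dataset realization in the almost-sure event where $\epshat\le\epsmc(\beta)$. Both problems use the same nominal center $\Pbar$, as assumed at the start of this subsection. For radii $r\le s$, the Wasserstein balls are nested: $B_r(\Pbar)\subseteq B_s(\Pbar)$. So for every fixed $x\in X$,
\begin{equation*}
\sup_{Q\in B_{\epshat}(\Pbar)}\E_Q[h(x,\xi)]\le \sup_{Q\in B_{\epsmc(\beta)}(\Pbar)}\E_Q[h(x,\xi)],
\end{equation*}
which is the monotonicity half of Lemma \ref{lem:radius-lipschitz}; the Lipschitz assumption is not needed for this half. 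Taking the infimum over $x\in X$ on both sides preserves the inequality and gives $\widehat J_N\le J_N^{\mathrm{mc}}$ on that event. Hence the inequality holds almost surely.

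One caveat is that $\epsmc(\beta)$ is deterministic while $\epshat$ is random. The comparison is made pathwise, so this causes no difficulty. The argument also does not require either optimizer to exist, because it compares values only.

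\emph{Validity.} This is exactly Theorem \ref{thm:learned-radius-finite} applied with the coverage hypothesis $P^N\{R_N\le\epshat\}\ge 1-\beta$. On the event $\{R_N\le\epshat\}$ we have $P\in B_{\epshat}(\Pbar)$. Therefore $\E_P[h(\widehat x_N,\xi)]$ is bounded by the inner supremum at $\widehat x_N$, and that supremum equals $\widehat J_N$ by optimality of $\widehat x_N$.

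I do not expect a real obstacle. The only point needing care is that the lower certificate relies on the shared-center assumption stated before the theorem. Without it, the balls would be centered at different nominal laws and the inclusion argument would fail.
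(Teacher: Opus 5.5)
Your proposal is correct and follows the paper's proof essentially step for step. Ball inclusion $B_{\epshat}(\Pbar)\subseteq B_{\epsmc(\beta)}(\Pbar)$ gives the pointwise comparison of inner suprema, taking infima over $x$ yields $\widehat J_N\le J_N^{\mathrm{mc}}$, and validity comes directly from Theorem \ref{thm:learned-radius-finite}.
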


\begin{proof}
If $\epshat\le\epsmc(\beta)$, then
\begin{equation}
    B_{\epshat}(\Pbar)\subseteq B_{\epsmc(\beta)}(\Pbar).
\end{equation}
Thus for every $x$,
\begin{equation}
    \sup_{Q\in B_{\epshat}(\Pbar)}\E_Q[h(x,\xi)]
    \le
    \sup_{Q\in B_{\epsmc(\beta)}(\Pbar)}\E_Q[h(x,\xi)].
\end{equation}
Taking infimum over $x$ gives $\widehat J_N\le J_N^{\mathrm{mc}}$. The reliability follows from Theorem \ref{thm:learned-radius-finite}.
\end{proof}

\begin{theorem}[High-probability comparison]\label{thm:hp-comparison-mc}
Suppose
\begin{equation}
    P^N\{R_N\le\epshat\}\ge1-\beta,
    \qquad
    P^N\{\epshat\le\epsmc(\beta)\}\ge1-\alpha.
\end{equation}
Then
\begin{equation}
    P^N\left\{\E_P[h(\widehat x_N,\xi)]\le\widehat J_N\le J_N^{\mathrm{mc}}\right\}
    \ge1-\alpha-\beta.
\end{equation}
\end{theorem}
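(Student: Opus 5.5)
The plan is to intersect the two events from the hypotheses and control the failure probability with a union bound, following Theorem \ref{thm:same-valid-lower-cert} and Theorem \ref{thm:hp-radius-dom}. Define
\[
\mathcal A_N=\{R_N\le\epshat\},\qquad \mathcal C_N=\{\epshat\le\epsmc(\beta)\}.
\]
By assumption $P^N(\mathcal A_N^c)\le\beta$ and $P^N(\mathcal C_N^c)\le\alpha$. Hence $P^N(\mathcal A_N\cap\mathcal C_N)\ge1-\alpha-\beta$. It then suffices to show that the double inequality $\E_P[h(\widehat x_N,\xi)]\le\widehat J_N\le J_N^{\mathrm{mc}}$ holds pointwise on $\mathcal A_N\cap\mathcal C_N$.

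For the left inequality, work on $\mathcal A_N$. There $P\in B_{\epshat}(\Pbar)$, so optimality of $\widehat x_N$ in \eqref{eq:learned-radius-dro} gives $\E_P[h(\widehat x_N,\xi)]\le\sup_{Q\in B_{\epshat}(\Pbar)}\E_Q[h(\widehat x_N,\xi)]=\widehat J_N$. This is exactly the pathwise argument in the proof of Theorem \ref{thm:learned-radius-finite}, applied realization by realization rather than only in probability.

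For the right inequality, work on $\mathcal C_N$. There the balls are nested, $B_{\epshat}(\Pbar)\subseteq B_{\epsmc(\beta)}(\Pbar)$, for the same realized center $\Pbar$; this uses the standing assumption of the subsection that both radii are attached to the same nominal law. The inner suprema are therefore ordered for every $x$, and taking infima over $x\in X$ gives $\widehat J_N\le J_N^{\mathrm{mc}}$. This is the pointwise content of the proof of Theorem \ref{thm:same-valid-lower-cert}, now used only on $\mathcal C_N$ instead of almost surely. Combining the two inclusions yields $\mathcal A_N\cap\mathcal C_N\subseteq\{\E_P[h(\widehat x_N,\xi)]\le\widehat J_N\le J_N^{\mathrm{mc}}\}$, which proves the claim.

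I expect no real obstacle. The only care needed is to treat both inequalities as pathwise statements on the two events, and to handle the case where the infimum in \eqref{eq:learned-radius-dro} is not attained. In that case I would interpret $\widehat x_N$ as a measurable optimizer, as the paper does throughout, so that the relevant events are measurable.
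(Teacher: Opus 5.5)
Your proposal is correct and follows essentially the same argument as the paper. The paper also intersects $\{R_N\le\epshat\}$ and $\{\epshat\le\epsmc(\beta)\}$ via the union bound. On that intersection it obtains the certificate from the pathwise argument of Theorem~\ref{thm:learned-radius-finite} and the bound $\widehat J_N\le J_N^{\mathrm{mc}}$ from the ball-inclusion step of Theorem~\ref{thm:same-valid-lower-cert}.
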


\begin{proof}
By the union bound, the events $\{R_N\le\epshat\}$ and $\{\epshat\le\epsmc(\beta)\}$ hold simultaneously with probability at least $1-\alpha-\beta$. On their intersection, Theorem \ref{thm:learned-radius-finite} gives $\E_P[h(\widehat x_N,\xi)]\le\widehat J_N$, and the set-inclusion argument in Theorem \ref{thm:same-valid-lower-cert} gives $\widehat J_N\le J_N^{\mathrm{mc}}$.
\end{proof}

\subsection{Regret and certificate-gap bounds with random radii}

\begin{theorem}[Random-radius regret bound]\label{thm:random-radius-regret}
Under Assumption \ref{ass:lipschitz},
\begin{equation}\label{eq:random-radius-general-regret}
    0\le\E_P[h(\widehat x_N,\xi)]-J^\star
    \le L(\epshat+R_N)
\end{equation}
on the event $R_N\le\epshat$. In particular,
\begin{equation}
    0\le\E_P[h(\widehat x_N,\xi)]-J^\star\le2L\epshat
\end{equation}
on the same event.
\end{theorem}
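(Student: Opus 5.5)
The plan is to repeat the argument of Theorem \ref{thm:pred-regret} pointwise, on each dataset realization. The radius $\epshat$ and the center $\Pbar$ are random, but once a realization is fixed they are just numbers and a fixed law. The only extra care needed is that the intermediate choices ($x_\delta$, the worst-case $Q$) do not depend on anything random beyond what is fixed.

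The lower bound $0\le \E_P[h(\widehat x_N,\xi)]-J^\star$ is immediate, since $\widehat x_N\in X$ and $J^\star$ is an infimum over $X$.

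For the upper bound, fix a realization in the event $\{R_N\le\epshat\}$.
\begin{enumerate}[leftmargin=2em]
\item On this event $P\in B_{\epshat}(\Pbar)$. Exactly as in Theorem \ref{thm:learned-radius-finite}, this gives $\E_P[h(\widehat x_N,\xi)]\le \widehat J_N$.
\item Fix $\delta>0$ and a deterministic $\delta$-optimal decision $x_\delta$ with $\E_P[h(x_\delta,\xi)]\le J^\star+\delta$. By the definition of $\widehat J_N$ as an infimum over $x$, we have $\widehat J_N\le \sup_{Q\in B_{\epshat}(\Pbar)}\E_Q[h(x_\delta,\xi)]$.
\item For any $Q$ in that ball, the triangle inequality gives $\dW(Q,P)\le \dW(Q,\Pbar)+\dW(\Pbar,P)\le \epshat+R_N$.
\item Kantorovich--Rubinstein duality \eqref{eq:KR}, applied to $h(x_\delta,\cdot)/L\in\mathrm{Lip}_1$ (Assumption \ref{ass:lipschitz}), yields $\E_Q[h(x_\delta,\xi)]\le \E_P[h(x_\delta,\xi)]+L(\epshat+R_N)\le J^\star+\delta+L(\epshat+R_N)$.
\item Take the supremum over $Q$, chain with the first step, and let $\delta\downarrow0$. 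This gives \eqref{eq:random-radius-general-regret}.
\end{enumerate}

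The "in particular'' clause then follows by using $R_N\le\epshat$ once more, so that $L(\epshat+R_N)\le 2L\epshat$.

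Alternatively, one could shortcut the middle steps with Theorem \ref{thm:center-radius-stability}. Taking $\mu=\Pbar$, $r=\epshat$, $\nu=P$, $s=0$ gives $\widehat J_N=V(\Pbar,\epshat)\le V(P,0)+L(R_N+\epshat)=J^\star+L(\epshat+R_N)$.

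The main obstacle is minor: the finiteness and integrability needed for the KR step. We need $P$ and $Q$ in $\cM(\XiSet)$, which holds by definition of the ball. We also need $\E_P[h(x_\delta,\xi)]$ finite, which Lipschitzness plus a finite first moment provides.

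Measurability of the event is not an issue, because the statement is a pointwise inequality on the event. No probability computation is required.
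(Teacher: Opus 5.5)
Your proposal is correct and follows the paper's proof. The paper simply reruns the argument of Theorem~\ref{thm:pred-regret} with $\Ptil$, $r_N$, $\varepsilon_N$ replaced by $\Pbar$, $R_N$, $\epshat$, exactly as in your steps 1--5. Your alternative shortcut via Theorem~\ref{thm:center-radius-stability} with $(\nu,s)=(P,0)$ is also valid, since $B_0(P)=\{P\}$ gives $V(P,0)=J^\star$ (and only step 1 actually uses the event $R_N\le\epshat$).
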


\begin{proof}
The proof is the same as Theorem \ref{thm:pred-regret}, with $\Ptil$, $r_N$, and $\varepsilon_N$ replaced by $\Pbar$, $R_N$, and $\epshat$.
\end{proof}

\begin{corollary}[Rate transfer]\label{cor:rate-transfer}
Suppose $P^N(R_N\le\epshat)\ge1-\beta_N$, where $\beta_N\to0$, and $\epshat=O_P(a_N)$ for a deterministic positive sequence $a_N$. Under Assumption \ref{ass:lipschitz},
\begin{equation}
    \E_P[h(\widehat x_N,\xi)]-J^\star=O_P(a_N).
\end{equation}
More explicitly, if $P^N(\epshat\le Ca_N)\ge1-\alpha_N$, then
\begin{equation}
P^N\left\{\E_P[h(\widehat x_N,\xi)]-J^\star\le2LCa_N\right\}
\ge1-\alpha_N-\beta_N.
\end{equation}
\end{corollary}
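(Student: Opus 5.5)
The plan is to derive Corollary \ref{cor:rate-transfer} from Theorem \ref{thm:random-radius-regret} by intersecting two high-probability events. I would prove the explicit statement first and then read off the $O_P$ claim.

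\emph{Step 1 (explicit bound).} Define the coverage event $\mathcal C_N=\{R_N\le\epshat\}$ and the size event $\mathcal S_N=\{\epshat\le Ca_N\}$. By assumption, $P^N(\mathcal C_N^c)\le\beta_N$ and $P^N(\mathcal S_N^c)\le\alpha_N$. The union bound then gives $P^N(\mathcal C_N\cap\mathcal S_N)\ge1-\alpha_N-\beta_N$. On $\mathcal C_N$, Theorem \ref{thm:random-radius-regret} gives
\[
0\le\E_P[h(\widehat x_N,\xi)]-J^\star\le 2L\epshat .
\]
On $\mathcal S_N$ we also have $2L\epshat\le 2LCa_N$. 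Chaining the two inequalities on $\mathcal C_N\cap\mathcal S_N$ yields the displayed bound with probability at least $1-\alpha_N-\beta_N$.

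\emph{Step 2 ($O_P$ statement).} Fix $\delta>0$. Since $\epshat=O_P(a_N)$, there exist $C_\delta$ and $N_\delta$ such that $P^N(\epshat>C_\delta a_N)\le\delta/2$ for all $N\ge N_\delta$. Since $\beta_N\to0$, we may enlarge $N_\delta$ so that $\beta_N\le\delta/2$ for $N\ge N_\delta$. Step 1 with $C=C_\delta$ and $\alpha_N\le\delta/2$ then gives
\[
P^N\bigl\{\E_P[h(\widehat x_N,\xi)]-J^\star>2LC_\delta a_N\bigr\}\le\delta
\]
for all $N\ge N_\delta$. This is exactly the definition of $O_P(a_N)$ for the nonnegative regret, with constant $2LC_\delta$.

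\emph{Main obstacle.} The argument is routine, and the only step needing care is the order of quantifiers in Step 2. One must choose $C_\delta$ from the tightness of $\epshat/a_N$ before invoking Step 1, and must handle the finitely many small $N$. The latter is either absorbed by the ``for all sufficiently large $N$'' form of the $O_P$ definition, or handled by enlarging $C_\delta$ using $a_N>0$ and the finiteness of the regret. For that second option I would note that the regret is finite on $\mathcal C_N$, which follows from the Lipschitz bound.
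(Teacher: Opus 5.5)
Your proposal is correct and is the argument the paper leaves implicit (it states the corollary without proof): intersect the coverage event with $\{\epshat\le Ca_N\}$, apply Theorem \ref{thm:random-radius-regret} on the intersection, use the union bound as in Theorems \ref{thm:hp-comparison-mc} and \ref{thm:approx-oracle}, and then obtain the $O_P$ claim by splitting $\delta/2$ between $\alpha_N$ and $\beta_N$. One small correction to your optional aside: enlarging $C_\delta$ to cover finitely many small $N$ requires the regret to be finite almost surely, not merely on $\mathcal C_N$, whose complement need not be small for small $N$; this does hold for every realization, because Assumption \ref{ass:lipschitz} and $P\in\cM(\XiSet)$ make $\E_P[h(x,\xi)]$ finite for each $x$.
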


\begin{theorem}[Expected regret with undercoverage]\label{thm:expected-regret}
Assume Assumption \ref{ass:lipschitz} and suppose there is a finite constant $B$ such that
\begin{equation}
0\le \E_P[h(x,\xi)]-J^\star\le B\qquad\forall x\in X.
\end{equation}
Then, with $\eta_N=P^N(R_N>\epshat)$,
\begin{equation}\label{eq:expected-regret}
\E_{P^N}\left[\E_P[h(\widehat x_N,\xi)]-J^\star\right]
\le 2L\E_{P^N}[\epshat]+B\eta_N.
\end{equation}
\end{theorem}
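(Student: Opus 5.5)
The plan is to split the regret on the coverage event $\mathcal E_N=\{R_N\le\epshat\}$ and its complement, and take expectations separately on each piece. Write the regret as
\[
\Delta_N:=\E_P[h(\widehat x_N,\xi)]-J^\star ,
\]
which is a nonnegative random variable under $P^N$. Since $\Delta_N\ge 0$, we can write
\[
\Delta_N=\Delta_N\1_{\mathcal E_N}+\Delta_N\1_{\mathcal E_N^c},
\]
and bound each term pointwise before integrating.

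On $\mathcal E_N$, Theorem \ref{thm:random-radius-regret} gives the bound $\Delta_N\le L(\epshat+R_N)\le 2L\epshat$. Therefore
\[
\Delta_N\1_{\mathcal E_N}\le 2L\epshat\,\1_{\mathcal E_N}\le 2L\epshat ,
\]
where the last step uses $\epshat\ge0$, which holds because $g_\theta$ takes values in $\R_+$. On $\mathcal E_N^c$, the uniform bound $0\le\E_P[h(x,\xi)]-J^\star\le B$ applies with $x=\widehat x_N\in X$, so $\Delta_N\1_{\mathcal E_N^c}\le B\1_{\mathcal E_N^c}$.

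Adding these two pointwise bounds and taking $\E_{P^N}$ gives
\[
\E_{P^N}[\Delta_N]\le 2L\E_{P^N}[\epshat]+B\,P^N(\mathcal E_N^c)=2L\E_{P^N}[\epshat]+B\eta_N ,
\]
which is \eqref{eq:expected-regret}.

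The main point requiring care is measurability and integrability rather than any inequality. The term $\Delta_N$ must be a measurable function of the data for its expectation to make sense. The selection $\widehat x_N$ should be taken measurable, or the expectation read as an outer expectation, consistent with the paper's convention of enlarging the probability space. If $\E_{P^N}[\epshat]=\infty$, the bound holds trivially. Otherwise every term is finite, since $0\le\Delta_N\le B$. No interchange of limits is needed beyond linearity and monotonicity of expectation.
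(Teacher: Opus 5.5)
Your proposal is correct and follows the paper's proof essentially verbatim: you split the regret over the coverage event $\{R_N\le\epshat\}$ and its complement, bound it by $2L\epshat$ on the former via Theorem~\ref{thm:random-radius-regret} and by $B$ on the latter, then drop the indicator using $\epshat\ge0$ and take expectations. Your remarks on measurability of $\widehat x_N$ and on the case $\E_{P^N}[\epshat]=\infty$ are sensible refinements that the paper leaves implicit.
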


\begin{proof}
On the coverage event, Theorem \ref{thm:random-radius-regret} bounds regret by $2L\epshat$. On its complement, regret is at most $B$. Splitting the expectation over the two events gives
\begin{equation}
\E[\mathrm{regret}]
\le2L\E[\epshat\1\{R_N\le\epshat\}]+B\eta_N
\le2L\E[\epshat]+B\eta_N.
\end{equation}
\end{proof}

\subsection{Asymptotic consistency with learned radii}

\begin{theorem}[Learned-radius asymptotic consistency]\label{thm:learned-radius-consistency}
Assume
\begin{equation}
    \epshat\to0\quad\text{almost surely},
\end{equation}
and
\begin{equation}
    R_N\le\epshat
\end{equation}
for all sufficiently large $N$, almost surely. Under Assumption \ref{ass:regularity},
\begin{equation}
    \widehat J_N\to J^\star
    \qquad\text{almost surely.}
\end{equation}
If the optimizer-regularity conditions in Assumption \ref{ass:regularity} hold, every accumulation point of $\{\widehat x_N\}$ is a true optimizer.
\end{theorem}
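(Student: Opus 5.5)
The plan is to reduce this to the argument of Theorem \ref{thm:pred-consistency}, applied pathwise along each sample path. Fix an outcome in the probability-one event on which both $\epshat\to0$ and $R_N\le\epshat$ hold for all $N\ge N_0(\omega)$. On this path the radius sequence $\varepsilon_N:=\epshat(\omega)$ is a deterministic sequence tending to zero. The center $\Pbar(\omega)$ satisfies $\dW(P,\Pbar)\le\varepsilon_N$ eventually. The proof of Theorem \ref{thm:pred-consistency} never uses that $\varepsilon_N$ is nonrandom, nor that the center is $\Ptil$ rather than $\Phat$. It uses only eventual coverage and $\varepsilon_N\to0$, so it transfers verbatim once the center is replaced by $\Pbar$.

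Concretely, I would carry out three steps. First, the lower sandwich: for $N\ge N_0$, $P\in B_{\epshat}(\Pbar)$. Hence for the optimizer $\widehat x_N$,
\[
J^\star\le\E_P[h(\widehat x_N,\xi)]\le\widehat J_N,
\]
by the same inclusion argument as in Theorem \ref{thm:learned-radius-finite}. Second, the upper bound. Fix $\delta>0$, choose $x_\delta$ that is $\delta$-optimal for \eqref{eq:true-sp}, and choose $Q_N\in B_{\epshat}(\Pbar)$ that is $\delta$-optimal for the inner supremum at $x_\delta$. The triangle inequality gives $\dW(Q_N,P)\le 2\epshat\to0$. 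Third, invoke the same upper-semicontinuity fact used in Theorem \ref{thm:pred-consistency}: under upper semicontinuity in $\xi$ and linear growth, $\dW$-convergence gives $\limsup_N\E_{Q_N}[h(x_\delta,\xi)]\le\E_P[h(x_\delta,\xi)]$. This yields $\limsup_N\widehat J_N\le J^\star+2\delta$. Letting $\delta\downarrow0$ and combining with the first step gives $\widehat J_N\to J^\star$ on this path, hence almost surely.

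For the optimizers, I would repeat the Fatou argument pathwise. Take a subsequence $\widehat x_{N_k}\to x^\star$; then $x^\star\in X$ by closedness. Lower semicontinuity in $x$, together with Fatou's lemma applied after shifting by the integrable envelope $L_0(1+\norm{\xi})$, gives
\[
\E_P[h(x^\star,\xi)]\le\liminf_k\E_P[h(\widehat x_{N_k},\xi)]\le\liminf_k\widehat J_{N_k}=J^\star,
\]
where the middle inequality is the lower sandwich from the first step.

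The main obstacle is the semicontinuity step: $\E_{Q_N}[h]\to$ at most $\E_P[h]$ when $Q_N\to P$ in $\dW$, with $h$ only upper semicontinuous and of linear growth. I would justify it as follows. Wasserstein-1 convergence gives weak convergence plus uniform integrability of $\norm{\xi}$. Apply the portmanteau theorem to the upper semicontinuous function $h-L_0(1+\norm{\xi})$, which is bounded above by $0$, and add back $L_0(1+\norm{\xi})$, whose expectations converge. This is exactly the ingredient already invoked in Theorem \ref{thm:pred-consistency}, so I would cite it there. The only remaining care point is measurability and the a.s. qualifier, which is handled by working on the intersection of the two probability-one events.
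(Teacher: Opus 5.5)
Your proposal is correct and follows the paper's proof essentially step for step. The paper also bounds $\dW(Q_N,P)\le\epshat+R_N\le2\epshat$ and then defers to the argument of Theorem~\ref{thm:pred-consistency}: the lower sandwich, a $\delta$-optimal decision with a nearly worst-case $Q_N$, upper semicontinuity with linear growth, and Fatou for the optimizers. Your pathwise framing and your portmanteau justification via the shifted function $h-L_0(1+\norm{\xi})$ simply spell out steps the paper asserts without detail.
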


\begin{proof}
For any $Q_N\in B_{\epshat}(\Pbar)$,
\begin{equation}
    \dW(Q_N,P)\le\dW(Q_N,\Pbar)+\dW(\Pbar,P)
    \le\epshat+R_N\le2\epshat\to0
\end{equation}
eventually almost surely. The rest of the proof is identical to Theorem \ref{thm:pred-consistency}: evaluate the robust objective at a $\delta$-optimal true decision, take nearly worst-case $Q_N$, use upper semicontinuity and linear growth to pass to the limit, and use the sandwich $J^\star\le\E_P[h(\widehat x_N,\xi)]\le\widehat J_N$.
\end{proof}

\begin{corollary}[Borel--Cantelli version]\label{cor:bc}
Place the sequence of estimators on a common probability space, for example the infinite-product space generated by an i.i.d. sequence. Suppose there exists $\beta_N$ such that
\begin{equation}
    P^\infty\{R_N>\epshat\}\le\beta_N,
    \qquad
    \sum_{N=1}^\infty\beta_N<\infty,
\end{equation}
and $\epshat\to0$ almost surely. Then the conclusions of Theorem \ref{thm:learned-radius-consistency} hold.
\end{corollary}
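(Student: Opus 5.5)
The plan is to reduce Corollary \ref{cor:bc} to Theorem \ref{thm:learned-radius-consistency} by verifying its one missing hypothesis. Since $\epshat\to0$ almost surely is assumed, we only need to show that $R_N\le\epshat$ holds for all sufficiently large $N$, almost surely.

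Work on the common probability space $(\XiSet^{\mathbb N},P^\infty)$. All estimators $\Pbar$, $\epshat$ and $R_N=\dW(P,\Pbar)$ are measurable functions of the first $N$ coordinates, together with any auxiliary independent randomization, which is absorbed into the enlarged space as described in the preliminaries. Define the events
\[
E_N=\{R_N>\epshat\}.
\]
By hypothesis, $\sum_N P^\infty(E_N)\le\sum_N\beta_N<\infty$. The first Borel--Cantelli lemma then gives $P^\infty(\limsup_N E_N)=0$, and this requires no independence between the $E_N$. So almost surely only finitely many $E_N$ occur. That is, there is a random index $N_0(\omega)<\infty$ such that $R_N(\omega)\le\epshat(\omega)$ for all $N\ge N_0(\omega)$.

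Intersecting this probability-one event with the probability-one event $\{\epshat\to0\}$ yields a full-measure set on which both hypotheses of Theorem \ref{thm:learned-radius-consistency} hold pathwise. On that set we can run the argument of Theorem \ref{thm:learned-radius-consistency}, which relies on Theorem \ref{thm:pred-consistency} and Assumption \ref{ass:regularity}, realization by realization. This gives $\widehat J_N\to J^\star$. Under the optimizer-regularity conditions, it also shows that every accumulation point of $\{\widehat x_N\}$ is a true optimizer.

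The only subtle point is measurability. The events $E_N$ must be events on the common space so that Borel--Cantelli applies. I would handle this by noting that $R_N$ and $\epshat$ are assumed to be random variables: the statement already treats $P^\infty\{R_N>\epshat\}$ as well defined. Otherwise the argument is a direct application of Borel--Cantelli.
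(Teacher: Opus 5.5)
Your proposal is correct and follows the paper's proof: the first Borel--Cantelli lemma gives $R_N\le\epshat$ for all sufficiently large $N$ almost surely, and intersecting this event with $\{\epshat\to0\}$ supplies the hypotheses of Theorem \ref{thm:learned-radius-consistency}. Your added remarks, that no independence among the events is needed and that the events must be measurable on the common space, are accurate; the paper leaves both implicit.
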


\begin{proof}
By the Borel--Cantelli lemma, the event $\{R_N>\epshat\}$ occurs only finitely many times almost surely. Thus $R_N\le\epshat$ eventually almost surely. Apply Theorem \ref{thm:learned-radius-consistency}.
\end{proof}

\subsection{Oracle conditional-quantile radius}

Let $Z_N$ be observable features used by the radius model. Define
\begin{equation}
    R_N=\dW(P,\Pbar).
\end{equation}
The oracle conditional radius is
\begin{equation}\label{eq:oracle-radius}
    \varepsilon_N^\star(Z_N;\beta)=
    \inf\{r\ge0:P^N(R_N\le r\mid Z_N)\ge1-\beta\}.
\end{equation}

\begin{theorem}[Validity and pointwise optimality of the oracle conditional radius]\label{thm:oracle-radius}
Assume a measurable version of the conditional quantile in \eqref{eq:oracle-radius} is used. Then
\begin{equation}
    P^N\{R_N\le \varepsilon_N^\star(Z_N;\beta)\mid Z_N\}\ge1-\beta
    \qquad\text{almost surely},
\end{equation}
and therefore
\begin{equation}
    P^N\{R_N\le \varepsilon_N^\star(Z_N;\beta)\}\ge1-\beta.
\end{equation}
Moreover, if $r(Z_N)$ is any nonnegative, $Z_N$-measurable rule satisfying
\begin{equation}
    P^N\{R_N\le r(Z_N)\mid Z_N\}\ge1-\beta
    \qquad\text{almost surely},
\end{equation}
then
\begin{equation}
    r(Z_N)\ge \varepsilon_N^\star(Z_N;\beta)
    \qquad\text{almost surely}.
\end{equation}
Thus the oracle conditional quantile minimizes every increasing functional of the radius, including its expectation, among conditionally valid radius rules.
\end{theorem}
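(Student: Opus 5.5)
The plan is to work with a regular conditional distribution of $R_N$ given $Z_N$. Since $R_N$ is a real-valued random variable, such a version exists. Define $F(r,z)=P^N(R_N\le r\mid Z_N=z)$, chosen so that for each $z$ the map $r\mapsto F(r,z)$ is a genuine distribution function: nondecreasing and right-continuous. For a nonnegative random variable it also tends to $1$. With this version, $\varepsilon_N^\star(z;\beta)=\inf\{r\ge0:F(r,z)\ge1-\beta\}$ is the generalized inverse of $F(\cdot,z)$. It is finite because $F(r,z)\to1>1-\beta$. It is Borel measurable in $z$ because $\{z:\varepsilon_N^\star(z;\beta)\le t\}=\{z:F(t,z)\ge1-\beta\}$ by right-continuity. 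This is the ``measurable version'' the statement refers to.

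\emph{Validity.} By right-continuity the infimum is attained, so $F(\varepsilon_N^\star(z;\beta),z)\ge1-\beta$ for every $z$. To turn this into the conditional statement, I will use the standard substitution property of regular conditional distributions: for a $Z_N$-measurable random variable $T$, $P^N(R_N\le T\mid Z_N)=F(T,Z_N)$ almost surely. This can be checked on simple $T$ and extended by monotone limits, or obtained directly from the disintegration formula. Applying it with $T=\varepsilon_N^\star(Z_N;\beta)$ gives the conditional coverage almost surely. Marginal coverage then follows from the tower property, $P^N\{R_N\le T\}=\E[F(T,Z_N)]\ge1-\beta$.

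\emph{Pointwise minimality.} Let $r(Z_N)=\rho(Z_N)$ with $\rho$ Borel, and suppose it is conditionally valid. By the same substitution property, $F(\rho(Z_N),Z_N)\ge1-\beta$ almost surely. Hence, on a full-measure set, $\rho(Z_N)$ belongs to the set whose infimum defines $\varepsilon_N^\star(Z_N;\beta)$, so $\rho(Z_N)\ge\varepsilon_N^\star(Z_N;\beta)$ almost surely. For the final sentence, note that an increasing functional $\Psi$ satisfies $\Psi(r(Z_N))\ge\Psi(\varepsilon_N^\star(Z_N;\beta))$ whenever the pointwise inequality holds almost surely. Taking expectations gives the statement for $\E[\cdot]$. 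For a general increasing functional of the random radius, read as monotone under almost-sure ordering, the conclusion is immediate.

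The main obstacle is measure-theoretic: the conditional probability appearing in the statement must be identified with $F(\cdot,Z_N)$ evaluated at a random, $Z_N$-measurable level, and this must hold simultaneously for all $r$ on one null-set-free event. I would handle it entirely through the regular conditional distribution of $R_N$ given $Z_N$. One could instead try to take a countable dense set of $r$ and use monotonicity, but with a regular version the substitution lemma removes this difficulty. Everything else is the elementary fact that the generalized inverse of a right-continuous distribution function attains its quantile level.
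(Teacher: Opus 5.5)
Your proposal is correct and follows the same route as the paper. Validity comes from the generalized inverse of the right-continuous conditional distribution function, marginal coverage from the tower property, and minimality from a pointwise comparison with the defining infimum. What you add is the measure-theoretic bookkeeping that the paper's terse proof leaves implicit when it says it will ``fix a feature realization outside a null set.'' Specifically, you choose a regular conditional distribution $F(r,z)$, check that $\varepsilon_N^\star$ is measurable, and use the substitution identity $P^N(R_N\le T\mid Z_N)=F(T,Z_N)$ for $Z_N$-measurable $T$.
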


\begin{proof}
The coverage statement follows from the generalized-inverse definition of a conditional quantile and right-continuity of the conditional distribution function. Taking expectations over $Z_N$ gives unconditional coverage. For minimality, fix a feature realization outside a null set. Any value smaller than the infimum in \eqref{eq:oracle-radius} has conditional coverage strictly below $1-\beta$; hence every conditionally valid rule must be at least the conditional quantile. The final statement follows by monotonicity.
\end{proof}

\subsection{Approximate learned radii and safety margins}

\begin{theorem}[Approximate oracle domination]\label{thm:approx-oracle}
Let $\varepsilon_N^\star=\varepsilon_N^\star(Z_N;\beta)$ be the oracle conditional radius. If
\begin{equation}
    P^N\{\epshat\ge \varepsilon_N^\star\}\ge1-\alpha,
\end{equation}
then
\begin{equation}
    P^N\{R_N\le\epshat\}\ge1-\beta-\alpha.
\end{equation}
Consequently,
\begin{equation}
    P^N\{\E_P[h(\widehat x_N,\xi)]\le\widehat J_N\}\ge1-\beta-\alpha.
\end{equation}
\end{theorem}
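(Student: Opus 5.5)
The argument is a union bound built on the coverage of the oracle radius from Theorem \ref{thm:oracle-radius}. Write $\varepsilon_N^\star=\varepsilon_N^\star(Z_N;\beta)$. I introduce the oracle-coverage event $\mathcal C_N=\{R_N\le\varepsilon_N^\star\}$ and the domination event $\mathcal D_N=\{\epshat\ge\varepsilon_N^\star\}$. On their intersection we have $R_N\le\varepsilon_N^\star\le\epshat$, so
\[
\mathcal C_N\cap\mathcal D_N\subseteq\{R_N\le\epshat\}.
\]

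To bound the probabilities, Theorem \ref{thm:oracle-radius} gives $P^N(\mathcal C_N)\ge1-\beta$: conditional coverage holds almost surely, and taking expectations over $Z_N$ yields unconditional coverage. The hypothesis gives $P^N(\mathcal D_N^c)\le\alpha$. The union bound then gives
\[
P^N\{R_N\le\epshat\}\ge P^N(\mathcal C_N)-P^N(\mathcal D_N^c)\ge1-\beta-\alpha.
\]
This is the same inequality manipulation as in the proof of Theorem \ref{thm:hp-radius-dom}.

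The decision-level statement follows by applying Theorem \ref{thm:learned-radius-finite}, equivalently Corollary \ref{cor:eta}, with the confidence level $\beta$ replaced by $\beta+\alpha$. On $\{R_N\le\epshat\}$ we have $P\in B_{\epshat}(\Pbar)$, and optimality of $\widehat x_N$ then gives $\E_P[h(\widehat x_N,\xi)]\le\widehat J_N$.

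There is no substantive obstacle here. The only point requiring care is measurability: we need a measurable version of $\varepsilon_N^\star$ so that $\mathcal C_N$ and $\mathcal D_N$ are genuine events. This is handled by the same measurable-version assumption as in Theorem \ref{thm:oracle-radius}. Note also that $\epshat$ need not be $Z_N$-measurable, since the argument uses only unconditional probabilities.
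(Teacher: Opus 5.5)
Your proposal is correct and follows the paper's proof essentially verbatim: the inclusion $\{\epshat\ge\varepsilon_N^\star\}\cap\{R_N\le\varepsilon_N^\star\}\subseteq\{R_N\le\epshat\}$, the union-bound step combined with the unconditional oracle coverage from Theorem~\ref{thm:oracle-radius}, and then Theorem~\ref{thm:learned-radius-finite} for the certificate. Your remarks on measurability, and that $\epshat$ need not be $Z_N$-measurable, are accurate additions that the paper leaves implicit.
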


\begin{proof}
Let $A_N=\{\epshat\ge\varepsilon_N^\star\}$. Then
\begin{equation}
    A_N\cap\{R_N\le\varepsilon_N^\star\}\subseteq\{R_N\le\epshat\}.
\end{equation}
Therefore
\begin{align*}
P^N(R_N\le\epshat)
&\ge P^N(A_N\cap\{R_N\le\varepsilon_N^\star\})\\
&\ge P^N(R_N\le\varepsilon_N^\star)-P^N(A_N^c)\\
&\ge1-\beta-\alpha.
\end{align*}
Apply Theorem \ref{thm:learned-radius-finite}.
\end{proof}

\begin{corollary}[Safety-margin correction]\label{cor:safety-margin}
If
\begin{equation}
    P^N\{|\epshat-\varepsilon_N^\star|\le\Delta_N\}\ge1-\alpha,
\end{equation}
and we define
\begin{equation}
    \epshat^+=\epshat+\Delta_N,
\end{equation}
then
\begin{equation}
    P^N\{R_N\le\epshat^+\}\ge1-\beta-\alpha.
\end{equation}
\end{corollary}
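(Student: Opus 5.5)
The plan is to reduce Corollary \ref{cor:safety-margin} to Theorem \ref{thm:approx-oracle}. That theorem needs only the single hypothesis $P^N\{\epshat^+\ge\varepsilon_N^\star\}\ge1-\alpha$ for the radius being used. So I will show that the two-sided closeness event in the corollary sits inside the one-sided domination event for the inflated radius $\epshat^+$. Theorem \ref{thm:approx-oracle} then applies with $\epshat^+$ in place of $\epshat$. The oracle coverage $P^N\{R_N\le\varepsilon_N^\star\}\ge1-\beta$ comes from Theorem \ref{thm:oracle-radius}.

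The steps run in this order.

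First, let $C_N=\{|\epshat-\varepsilon_N^\star|\le\Delta_N\}$. On $C_N$ we have $\varepsilon_N^\star-\Delta_N\le\epshat$, so
\[
\varepsilon_N^\star\le\epshat+\Delta_N=\epshat^+.
\]
This gives $C_N\subseteq\{\epshat^+\ge\varepsilon_N^\star\}$, and therefore $P^N\{\epshat^+\ge\varepsilon_N^\star\}\ge1-\alpha$.

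Second, note that $\epshat^+$ is nonnegative, since $\Delta_N\ge0$ is implicit in the statement and $\epshat\ge0$. Hence $\epshat^+$ is an admissible radius.

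Third, run the argument of Theorem \ref{thm:approx-oracle} with $A_N=\{\epshat^+\ge\varepsilon_N^\star\}$:
\[
A_N\cap\{R_N\le\varepsilon_N^\star\}\subseteq\{R_N\le\epshat^+\}.
\]
The union bound then gives
\[
P^N\{R_N\le\epshat^+\}\ge P^N\{R_N\le\varepsilon_N^\star\}-P^N(A_N^c)\ge1-\beta-\alpha.
\]
If the decision-level consequence is also wanted, Theorem \ref{thm:learned-radius-finite} (equivalently Corollary \ref{cor:eta}) with radius $\epshat^+$ gives the matching reliability bound for the optimizer computed with $\epshat^+$.

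There is no substantive obstacle here. The only point to check is measurability: $\Delta_N$ may be deterministic or a random variable, and in either case the events above are measurable and the set inclusion holds pointwise. It is also worth remarking that only the lower half of the two-sided closeness condition is used. The upper half matters only for the efficiency of $\epshat^+$, since it yields $\epshat^+\le\varepsilon_N^\star+2\Delta_N$ on $C_N$, and not for its validity.
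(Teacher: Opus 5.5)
Your proposal is correct and follows the route the paper intends. The paper gives no separate proof of this corollary and treats it as an immediate consequence of Theorem~\ref{thm:approx-oracle}. Your argument is exactly that reduction: the closeness event forces $\epshat+\Delta_N\ge\varepsilon_N^\star$, and the union-bound argument of Theorem~\ref{thm:approx-oracle}, together with the oracle coverage from Theorem~\ref{thm:oracle-radius}, is then applied to the inflated radius.
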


\subsection{Split-conformal calibration of learned radii}

The previous results assume access to or approximation of the oracle radius. A practical finite-sample calibration method is split conformal prediction across exchangeable problem instances, such as independently simulated environments or independently sampled tasks. The base predictor must be fitted on a proper training set that is separate from the calibration instances.

Suppose the fitted predictor $f_\theta$ is fixed, and let $t=1,\ldots,T$ index calibration instances. For each instance, observe features $Z_t$ and a target
\begin{equation}
    R_t=\dW(P_t,\bar P_t),
\end{equation}
and compute the one-sided residual
\begin{equation}
    S_t=R_t-f_\theta(Z_t).
\end{equation}
Let $S_{(1)}\le\cdots\le S_{(T)}$ be the ordered residuals and define
\begin{equation}\label{eq:conformal-index}
    k=\left\lceil (T+1)(1-\beta)\right\rceil,
    \qquad
    q_{T,\beta}=\begin{cases}
    S_{(k)},&k\le T,\\
    +\infty,&k=T+1.
    \end{cases}
\end{equation}
The calibrated radius is
\begin{equation}\label{eq:conformal-radius}
    \widehat\varepsilon_{\mathrm{new}}
    =\max\{0,f_\theta(Z_{\mathrm{new}})+q_{T,\beta}\}.
\end{equation}

\begin{theorem}[Finite-sample split-conformal radius]\label{thm:conformal}
Assume that, conditional on the proper training data used to fit $f_\theta$, the $T$ calibration pairs and the new pair are exchangeable. Then
\begin{equation}
    \Prob\{R_{\mathrm{new}}\le \widehat\varepsilon_{\mathrm{new}}\}\ge1-\beta.
\end{equation}
Consequently, the corresponding DRO certificate satisfies
\begin{equation}
    \Prob\left\{\E_{P_{\mathrm{new}}}[h(\widehat x_{\mathrm{new}},\xi)]\le \widehat J_{\mathrm{new}}\right\}\ge1-\beta.
\end{equation}
\end{theorem}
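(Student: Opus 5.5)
The argument is the standard split-conformal rank argument applied to the one-sided residuals $S_t=R_t-f_\theta(Z_t)$, followed by the certificate argument of Theorem \ref{thm:learned-radius-finite}. Throughout I condition on the proper training data. Then $f_\theta$ is a fixed measurable function. Since the $T$ calibration pairs and the new pair $(Z_{\mathrm{new}},R_{\mathrm{new}})$ are exchangeable, the scores $S_1,\ldots,S_T,S_{\mathrm{new}}$ are exchangeable as well, because each score is the same fixed function applied to its pair.

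\textbf{Reduction to a rank event.} First I reduce coverage to an event about the scores. If $k=T+1$, then $q_{T,\beta}=+\infty$, so $\widehat\varepsilon_{\mathrm{new}}=+\infty$ and coverage holds trivially. Otherwise $k\le T$, and the inequality $S_{\mathrm{new}}\le S_{(k)}$ gives $R_{\mathrm{new}}\le f_\theta(Z_{\mathrm{new}})+q_{T,\beta}\le \widehat\varepsilon_{\mathrm{new}}$. The last step holds because taking the maximum with $0$ can only enlarge the radius, and $R_{\mathrm{new}}\ge0$ in any case. So it suffices to show $\Prob\{S_{\mathrm{new}}\le S_{(k)}\}\ge k/(T+1)\ge 1-\beta$.

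\textbf{The quantile lemma.} I use the standard fact that under exchangeability the rank of $S_{\mathrm{new}}$ among the $T+1$ scores is uniform when there are no ties, and stochastically no larger when ties are broken in its favour. Concretely, $S_{\mathrm{new}}> S_{(k)}$ holds exactly when $S_{\mathrm{new}}$ is strictly greater than at least $k$ of the calibration scores. That in turn happens exactly when $S_{\mathrm{new}}$ lies strictly above the $k$-th smallest of all $T+1$ scores.

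By exchangeability, each of the $T+1$ scores has the same probability $p$ of strictly exceeding the $k$-th smallest value of the full collection. At most $T+1-k$ scores can do so simultaneously, so summing over the $T+1$ scores gives $(T+1)p\le T+1-k$. Hence
\[
\Prob\{S_{\mathrm{new}}\le S_{(k)}\}\ge \frac{k}{T+1}\ge 1-\beta,
\]
using $k=\lceil (T+1)(1-\beta)\rceil$. The conditional bound holds for every realization of the training data, so integrating over the training data gives the unconditional statement.

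\textbf{The certificate.} On the event $\{R_{\mathrm{new}}\le\widehat\varepsilon_{\mathrm{new}}\}$ we have $P_{\mathrm{new}}\in B_{\widehat\varepsilon_{\mathrm{new}}}(\bar P_{\mathrm{new}})$. The argument of Theorem \ref{thm:learned-radius-finite} then applies verbatim: evaluate the inner supremum at $\widehat x_{\mathrm{new}}$ and use optimality. This gives $\E_{P_{\mathrm{new}}}[h(\widehat x_{\mathrm{new}},\xi)]\le\widehat J_{\mathrm{new}}$ with probability at least $1-\beta$.

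\textbf{Main obstacle.} The one delicate point is handling ties in the exchangeability step without assuming continuous scores. The counting argument above avoids any tie-breaking randomization, and I expect it to be the only step that needs care. A minor technical check is that $R_t$ and $f_\theta(Z_t)$ are measurable, so that the scores are well-defined random variables. This is implicit in the assumption that the pairs are exchangeable.
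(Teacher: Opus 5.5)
Your proposal is correct and follows the same route as the paper's proof. You apply the rank argument to the exchangeable one-sided residuals to get $\Prob\{S_{\mathrm{new}}\le q_{T,\beta}\}\ge k/(T+1)\ge1-\beta$, note that truncation at zero only enlarges the radius, and transfer the certificate via Theorem~\ref{thm:learned-radius-finite}. Your counting argument, using the $k$-th smallest of all $T+1$ scores, is simply a rigorous, tie-robust version of the paper's ``uniform up to ties'' step.
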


\begin{proof}
Conditional on the proper training data, the residuals $S_1,\ldots,S_T,S_{\mathrm{new}}$ are exchangeable. The rank of $S_{\mathrm{new}}$ among the $T+1$ residuals is therefore uniform up to ties. With the order statistic in \eqref{eq:conformal-index},
\begin{equation}
    \Prob\{S_{\mathrm{new}}\le q_{T,\beta}\}\ge \frac{k}{T+1}\ge1-\beta.
\end{equation}
This event implies $R_{\mathrm{new}}\le f_\theta(Z_{\mathrm{new}})+q_{T,\beta}$, and truncation at zero cannot reduce coverage because $R_{\mathrm{new}}\ge0$. The certificate follows from Theorem \ref{thm:learned-radius-finite}.
\end{proof}

\begin{corollary}[Conformal calibration with conservative proxy labels]\label{cor:conformal-proxy}
Suppose the true target $R_t$ is not observed exactly and the conformal procedure is instead run on proxy targets $U_t$. Conditional on the proper training data, assume that the $T$ calibration pairs $(Z_t,U_t)$ and the new pair $(Z_{\mathrm{new}},U_{\mathrm{new}})$ are exchangeable. Construct $\widehat\varepsilon_{\mathrm{new}}$ from residuals $U_t-f_\theta(Z_t)$ using \eqref{eq:conformal-index}. If
\begin{equation}
    \Prob\{R_{\mathrm{new}}\le U_{\mathrm{new}}\}\ge1-\gamma,
\end{equation}
then
\begin{equation}
    \Prob\{R_{\mathrm{new}}\le\widehat\varepsilon_{\mathrm{new}}\}
    \ge1-\beta-\gamma.
\end{equation}
In particular, if $U_t\ge R_t$ almost surely for every task, the original $1-\beta$ guarantee is preserved.
\end{corollary}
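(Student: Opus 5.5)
The plan is to reduce the statement to Theorem~\ref{thm:conformal} applied to the proxy targets, followed by a union-bound argument like the one in Theorems~\ref{thm:hp-radius-dom} and~\ref{thm:approx-oracle}. The calibrated radius $\widehat\varepsilon_{\mathrm{new}}=\max\{0,f_\theta(Z_{\mathrm{new}})+q_{T,\beta}\}$ is built from the proxy residuals $S_t'=U_t-f_\theta(Z_t)$. So the first step is to note that the proof of Theorem~\ref{thm:conformal} uses only two facts: exchangeability of the pairs conditional on the proper training data, and the order-statistic rank argument. Neither depends on the label being a Wasserstein distance. Running that argument on the exchangeable pairs $(Z_t,U_t)$ and $(Z_{\mathrm{new}},U_{\mathrm{new}})$ gives
\begin{equation*}
\Prob\{U_{\mathrm{new}}\le f_\theta(Z_{\mathrm{new}})+q_{T,\beta}\}\ge\frac{k}{T+1}\ge 1-\beta .
\end{equation*}
Since $\widehat\varepsilon_{\mathrm{new}}\ge f_\theta(Z_{\mathrm{new}})+q_{T,\beta}$, this yields $\Prob\{U_{\mathrm{new}}\le\widehat\varepsilon_{\mathrm{new}}\}\ge1-\beta$.

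One point needs care here. In Theorem~\ref{thm:conformal} the truncation at zero was justified by $R_{\mathrm{new}}\ge0$, but a proxy $U_{\mathrm{new}}$ might be negative. I will avoid the issue by only using the one-directional fact that truncation can only enlarge the radius, so $\{U_{\mathrm{new}}\le f_\theta(Z_{\mathrm{new}})+q_{T,\beta}\}\subseteq\{U_{\mathrm{new}}\le\widehat\varepsilon_{\mathrm{new}}\}$. The case $k=T+1$ makes $q_{T,\beta}=+\infty$, and the inclusion is trivial there.

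The second step is the set inclusion
\begin{equation*}
\{R_{\mathrm{new}}\le U_{\mathrm{new}}\}\cap\{U_{\mathrm{new}}\le\widehat\varepsilon_{\mathrm{new}}\}\subseteq\{R_{\mathrm{new}}\le\widehat\varepsilon_{\mathrm{new}}\}.
\end{equation*}
Together with $\Prob(A\cap B)\ge \Prob(B)-\Prob(A^c)$, this gives $\Prob\{R_{\mathrm{new}}\le\widehat\varepsilon_{\mathrm{new}}\}\ge 1-\beta-\gamma$. No independence between the proxy-error event and the conformal event is needed, so the union bound suffices.

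For the final claim, if $U_t\ge R_t$ almost surely for every task, then in particular $\Prob\{R_{\mathrm{new}}\le U_{\mathrm{new}}\}=1$. We may take $\gamma=0$, which recovers $1-\beta$. The DRO certificate then follows from Theorem~\ref{thm:learned-radius-finite} if it is wanted.

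I do not expect a real obstacle. The only delicate point is confirming that the exchangeability hypothesis is placed on the proxy pairs rather than the true ones, which the corollary assumes explicitly, and that the conformal rank argument transfers verbatim.
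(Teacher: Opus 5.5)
Your proposal is correct and matches the paper's proof: apply the split-conformal rank argument to the exchangeable proxy pairs to get $\Prob\{U_{\mathrm{new}}\le\widehat\varepsilon_{\mathrm{new}}\}\ge1-\beta$, then intersect with $\{R_{\mathrm{new}}\le U_{\mathrm{new}}\}$ and apply the union bound. Your remark that truncation at zero only enlarges the radius, so no sign condition on $U_{\mathrm{new}}$ is needed, makes explicit a step the paper leaves implicit.
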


\begin{proof}
The split-conformal rank argument applied to the proxy targets gives
\begin{equation}
    \Prob\{U_{\mathrm{new}}\le\widehat\varepsilon_{\mathrm{new}}\}\ge1-\beta.
\end{equation}
On the intersection of this event with $\{R_{\mathrm{new}}\le U_{\mathrm{new}}\}$, the desired coverage event holds. The union bound proves the result.
\end{proof}

\begin{remark}[Time-series data]
Ordinary split conformal is not justified by a chronological rolling sequence merely because the observations are placed in different folds. In the experiments, ordinary split conformal is restricted to independent task draws. The sequential study uses weighted and adaptive procedures designed for drift \citep{barber2023conformal,gibbs2021adaptive}, while the rolling financial study uses chronological pseudo-tasks and reports operational diagnostics rather than exchangeable finite-sample coverage.
\end{remark}

\subsection{Distributional radius versus certificate radius}

Distributional coverage $P\in B_\varepsilon(\Pbar)$ is sufficient but not necessary for the DRO certificate. Define
\begin{align}
C_N(\varepsilon)&=\{P\in B_\varepsilon(\Pbar)\},\\
G_N(\varepsilon)&=\{\E_P[h(x_N(\varepsilon),\xi)]\le J_N(\varepsilon)\}.
\end{align}
Here $x_N(\varepsilon)$ and $J_N(\varepsilon)$ are the optimizer and value of the DRO problem with radius $\varepsilon$; a measurable optimizer selection is assumed whenever it is needed for probability statements.

Since $C_N(\varepsilon)\subseteq G_N(\varepsilon)$, define
\begin{align}
\rdist(\beta)&=\inf\{\varepsilon:P^N(C_N(\varepsilon))\ge1-\beta\},\\
\rcert(\beta)&=\inf\{\varepsilon:P^N(G_N(\varepsilon))\ge1-\beta\}.
\end{align}

\begin{proposition}[Certificate radii can be smaller]\label{prop:cert-radius}
For every $\beta\in(0,1)$,
\begin{equation}
    \rcert(\beta)\le\rdist(\beta).
\end{equation}
\end{proposition}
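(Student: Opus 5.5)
The plan is to use the event inclusion $C_N(\varepsilon)\subseteq G_N(\varepsilon)$ for every $\varepsilon\ge0$, and then pass from events to probabilities and finally to infima.

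\textbf{Step 1: the inclusion.} I first check $C_N(\varepsilon)\subseteq G_N(\varepsilon)$ exactly as in the proof of Theorem~\ref{thm:learned-radius-finite}. On $C_N(\varepsilon)$ we have $P\in B_\varepsilon(\Pbar)$. Hence
\[
\E_P[h(x_N(\varepsilon),\xi)]\le\sup_{Q\in B_\varepsilon(\Pbar)}\E_Q[h(x_N(\varepsilon),\xi)]=J_N(\varepsilon),
\]
where the equality is optimality of $x_N(\varepsilon)$. This is precisely the event $G_N(\varepsilon)$.

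\textbf{Step 2: monotonicity of probabilities.} By the inclusion, $P^N(G_N(\varepsilon))\ge P^N(C_N(\varepsilon))$ for every $\varepsilon$. Here the measurable optimizer selection assumed in the text guarantees that $G_N(\varepsilon)$ is an event, so its probability is defined. Therefore the feasible set in the definition of $\rdist(\beta)$ is contained in the feasible set in the definition of $\rcert(\beta)$. An infimum over a larger set is no larger, which gives $\rcert(\beta)\le\rdist(\beta)$.

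\textbf{Edge cases.} If the set defining $\rdist(\beta)$ is empty, then $\rdist(\beta)=+\infty$ and there is nothing to prove. I would add one remark on domains. The definitions of the two radii do not state $\varepsilon\ge0$ explicitly, but both infima range over the same index set of admissible radii, so the containment argument applies verbatim.

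\textbf{Main obstacle.} The only delicate point is the measurability of $G_N(\varepsilon)$, which depends on the optimizer selection. Since the text assumes a measurable selection, this obstacle is removed by hypothesis. The rest is a one-line monotonicity argument.
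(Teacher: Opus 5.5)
Your proposal is correct and follows the paper's proof. Like the paper, you use $C_N(\varepsilon)\subseteq G_N(\varepsilon)$ to get $P^N(G_N(\varepsilon))\ge P^N(C_N(\varepsilon))$ for every $\varepsilon$, and then compare infima over nested feasible sets. The paper only asserts the inclusion, while you verify it explicitly via optimality of $x_N(\varepsilon)$, as in Theorem~\ref{thm:learned-radius-finite}.
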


\begin{proof}
Because $C_N(\varepsilon)\subseteq G_N(\varepsilon)$,
\begin{equation}
    P^N(G_N(\varepsilon))\ge P^N(C_N(\varepsilon))
\end{equation}
for every $\varepsilon$. Hence every distributionally valid radius is certificate-valid. Taking infima proves the claim.
\end{proof}

\begin{remark}
A model trained to predict $\rdist(\beta)$ is universal because it targets the true distribution. A model trained to predict $\rcert(\beta)$ can be less conservative but is task-specific: it depends on $h$, $X$, and the optimization problem.
\end{remark}

\section{Predictive DRO Reformulations for Portfolio Optimization}\label{sec:portfolio-reform}

This section records the optimization problem used in the experiments below. Consider $m$ assets with return vector $\xi\in\R^m$. Let
\begin{equation}
    X=\{x\in\R_+^m:\mathbf 1^\top x=1\}
\end{equation}
be the long-only simplex. The mean-CVaR loss is
\begin{equation}\label{eq:mean-cvar}
\inf_{x\in X}\left\{\E_P[-x^\top\xi]+\rho\,\mathrm{CVaR}_{\alpha,P}(-x^\top\xi)\right\}.
\end{equation}
Here $\alpha\in(0,1)$ denotes the upper-tail probability (thus confidence level $1-\alpha$). Using the Rockafellar--Uryasev representation \citep{rockafellar2000optimization},
\begin{equation}
\mathrm{CVaR}_{\alpha,P}(Y)=\inf_{\tau\in\R}\E_P\left[\tau+\frac1\alpha(Y-\tau)_+\right],
\end{equation}
we rewrite \eqref{eq:mean-cvar} as
\begin{equation}\label{eq:portfolio-loss}
\inf_{x\in X,\tau\in\R}\E_P\left[\max_{k=1,2}\left\{a_k x^\top\xi+b_k\tau\right\}\right],
\end{equation}
where
\begin{equation}
    a_1=-1,
    \qquad
    a_2=-1-\frac{\rho}{\alpha},
    \qquad
    b_1=\rho,
    \qquad
    b_2=\rho\left(1-\frac1\alpha\right).
\end{equation}
For a nominal distribution $\Pbar=\sum_{j=1}^Mp_j\delta_{\zeta_j}$ and Wasserstein radius $\varepsilon$, the DRO counterpart over $\XiSet=\R^m$ is the convex program
\begin{equation}\label{eq:portfolio-dro-lp}
\begin{aligned}
\inf_{x,\tau,\lambda,s_j}\quad
&\lambda\varepsilon+\sum_{j=1}^Mp_js_j\\
\mathrm{s.t.}\quad
&x\in X,\\
&a_k x^\top\zeta_j+b_k\tau\le s_j,
\qquad j=1,\ldots,M,\quad k=1,2,\\
&\norm{a_kx}_*\le\lambda,
\qquad k=1,2.
\end{aligned}
\end{equation}
If the Wasserstein ground norm is $\ell_1$, then the dual norm is $\ell_\infty$, and \eqref{eq:portfolio-dro-lp} is a linear program after writing $\norm{a_kx}_\infty\le\lambda$ as linear inequalities. A deterministic proportional transaction-cost term $\kappa\norm{x-x^-}_1$, where $x^-$ is the pre-trade portfolio, can be added directly to the objective without changing the inner Wasserstein reformulation. This extension is used in the rolling experiment.

\section{Computational Experiments}\label{sec:experiments}

The experiments ask whether learning the center and radius improves three distinct outcomes: approximation of the data-generating law, calibration of the resulting ambiguity set, and the downstream decision.  Keeping these outcomes separate is important because a better calibrated ambiguity set need not change the optimizer, and a better predictive score need not reduce decision loss.  The principal empirical conclusion is correspondingly qualified: learned radii can materially improve reliability, but calibration alone does not deliver a uniform efficiency or decision-value advantage.

\subsection{Protocol, estimands, and claim taxonomy}

For an independent task $t$, the primary quantities are
\begin{equation}\label{eq:experimental-estimands}
\begin{aligned}
C_{\mathrm{dist}}
&=\Prob\!\left\{\dW(P_t,\bar P_t)\le\widehat\varepsilon_t\right\},
&
C_{\mathrm{cert}}
&=\Prob\!\left\{\E_{P_t}[h(\widehat x_t,\xi)]\le\widehat J_t\right\},
\\
\mathcal R_t
&=\E_{P_t}[h(\widehat x_t,\xi)]-J_t^\star.
\end{aligned}
\end{equation}
The terminology in the results reflects what is actually identified.  We use \emph{one-dimensional numerical containment} when $W_1$ is evaluated on the frozen deterministic quantile grid with the saved sampling and boundary allowance.  Because grid error is not formally certified, these outcomes remain numerical containment diagnostics rather than exact population-coverage observations.  Multivariate synthetic containment is evaluated with a 64-point debiased-Sinkhorn discrepancy and is therefore called \emph{proxy containment}, not exact Wasserstein coverage.  In the financial experiment $P_t$ is unknown; future blocks are noisy proxy targets and $C_{\mathrm{cert}}$ is replaced by an operational certificate diagnostic.  No real-data distributional-coverage claim is made.

Training, calibration, validation or scenario-study, and test information were separated according to the frozen protocol of each experiment.  In the newsvendor and synthetic studies, centers were fitted only from each task's training sample, while radius-training, calibration, performance-validation or scenario-study, and test tasks remained disjoint.  The contextual experiment additionally used a separate center-training role.  Test tasks were evaluated once after center selection, radius models, safety corrections, and the three primary comparisons were fixed.  Oracle radii use test information and appear only as unattainable benchmarks.  Common random numbers pair methods within a task.

\begin{table}[H]
\centering
\caption{Scope and statistical meaning of the four experimental families.  ``Core'' denotes the prespecified primary configuration; the remaining axes are one-factor sensitivity designs.}
\label{tab:experiment-scope}
\small
\setlength{\tabcolsep}{4pt}
\begin{tabularx}{\linewidth}{@{}>{\raggedright\arraybackslash}p{.16\linewidth}>{\raggedright\arraybackslash}p{.19\linewidth}>{\raggedright\arraybackslash}X>{\raggedright\arraybackslash}p{.24\linewidth}@{}}
\toprule
Experiment & Evaluation units & Primary reliability object & Main variation \\
\midrule
Newsvendor & 25,000 held-out tasks & one-dimensional numerical $W_1$ & 5 laws, 5 values of $N$ \\
Synthetic portfolio & 4,000 core tasks/method & 64-point Sinkhorn proxy & $m,N/m,\alpha,\rho$, norm, contamination \\
Contextual shift & 4,000 tasks; 1,500 time points & conditional discrepancy proxy & interpolation, shift, misspecification \\
Rolling portfolios & 216 rebalances/cell & future-block and operational proxies & window, cost, risk, norm, universe \\
\bottomrule
\end{tabularx}
\end{table}

The primary target is $1-\beta=0.95$; 0.90 and 0.99 are sensitivity levels.  Binary task-level intervals are 95\% Wilson intervals.  Mean paired differences, including binary containment differences, use 2,000 paired bootstrap draws; two-sided Pratt--Wilcoxon tests assess paired shifts.  The bootstrap interval and Wilcoxon test concern different functionals and are reported separately.  Holm adjustment is applied to prespecified secondary comparisons, while the three primary synthetic-portfolio comparisons are reported without a multiplicity adjustment.  Financial return intervals use a 21-day moving-block bootstrap with 2,000 draws.  Sequential observations come from one dependent trajectory and are not assigned ordinary binomial confidence intervals.

The master seed is 20260711.  Linear programs use SciPy HiGHS and nonlinear comparator problems use SLSQP, both with a reported feasibility tolerance of $10^{-8}$.  The complete run used 12 workers, Python 3.11.7, and an NVIDIA RTX 4060 Laptop GPU, and required 331,302 seconds (92.0 hours) across the four stages.  The replication archive contains task-level data, fitted-model metadata, frozen configuration files, and cryptographic hashes; 90 automated tests and all 75 publication-artifact hashes pass.

\subsection{Center learning and finite-scenario approximation}

Figure~\ref{fig:center-scenario} separates predictive-model error from scenario error.  Averaged over the five newsvendor families, the GMM center's numerical model error falls from 6.941 at $N=20$ to 1.530 at $N=500$, a 78.0\% reduction.  The Gaussian and Student-$t$ shrinkage errors fall more modestly, from 9.193 to 7.114 and from 8.480 to 5.755, respectively.  Center learning is therefore useful in some families, but the advantage is not uniform across model classes.

\begin{figure}[!tbp]
\centering
\includegraphics[width=\linewidth,height=.78\textheight,keepaspectratio]{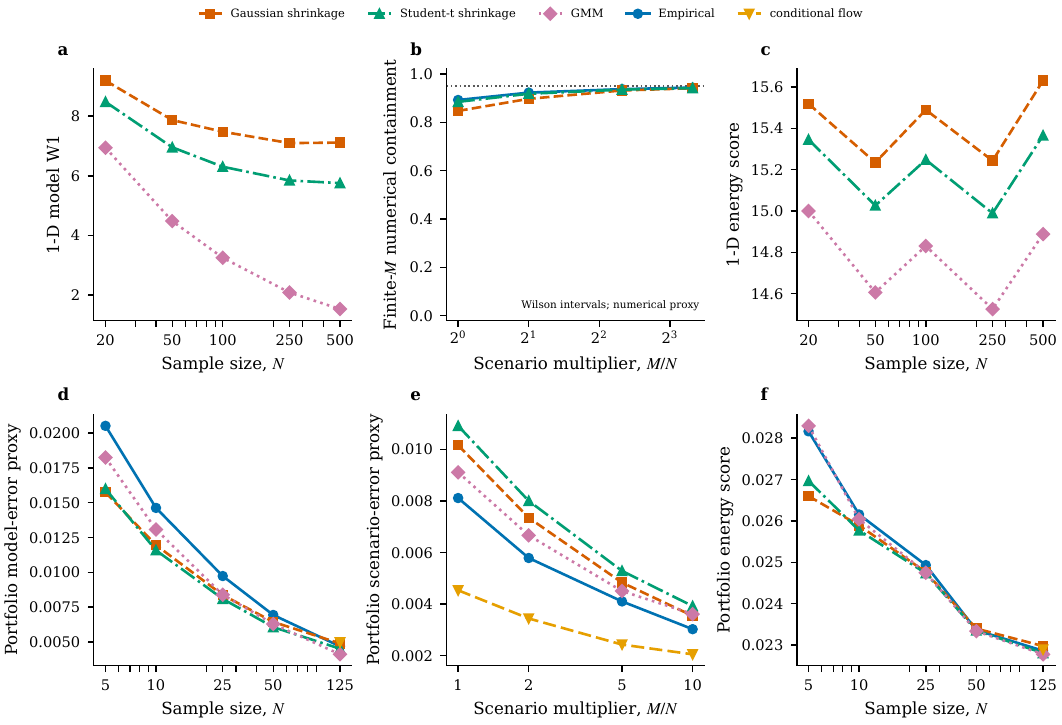}
\caption{Predictive-law and finite-scenario diagnostics.  Panels (a)--(c) report, for the one-dimensional newsvendor study, numerical model-$W_1$ error against sample size, fixed-radius finite-$M$ numerical containment against $M/N$, and the energy score.  Panels (d)--(f) report the corresponding multivariate portfolio model-error proxy, scenario-error proxy, and energy score.  The multivariate quantities are numerical discrepancy proxies, not exact Wasserstein distances.}
\label{fig:center-scenario}
\end{figure}

Scenario discretization remains material after the center is fitted.  At the 95\% target, fixed-radius numerical containment increases from 87.491\% at $M=N$ to 91.287\%, 93.495\%, and 94.225\% at $M/N=2,5,10$, respectively.  The direction agrees with the decomposition in Section~\ref{sec:predictive-dist}, but $M=10N$ does not eliminate undercoverage.  Thus a generated scenario law cannot be treated as the exact predictive law merely because its model error is small.

\subsection{Controlled calibration and decision value}

\subsubsection{One-dimensional newsvendor}

We use $c_h=1$, $c_b=4$, $N\in\{20,50,100,250,500\}$, and Gaussian, lognormal, Student-$t$, mixture, and contaminated demand.  Each distribution--sample-size cell contains 1,000 untouched test tasks.  The pooled estimand is marginal over this balanced, prespecified mixture; it is not a cellwise coverage statement.

Panel A of Table~\ref{tab:controlled-results} shows that split conformal attains 95.506\% pooled numerical containment.  The raw GBR radius already attains 95.575\%, however, and both rules produce the same mean regret, 1.2223.  Conformal is substantially less conservative than the fixed radius but is larger and has higher regret than empirical bootstrap.  In the secondary paired GMM-versus-empirical-bootstrap comparison, conformal increases containment by 2.956 percentage points (95\% paired interval 2.496--3.390; $p=6.73\times10^{-38}$) and increases regret by 0.6013 (0.5027--0.7012; $p=2.63\times10^{-240}$).  This is a calibration improvement, not a decision improvement.

\begin{table}[H]
\centering
\caption{Controlled 95\%-target results.  Intervals are 95\% Wilson intervals.  Panel A uses one-dimensional numerical $W_1$ containment and has 25,000 test tasks; its four determinate denominators are 24,983, 24,994, 24,991, and 25,000 in table order.  Panel B uses the saved 64-point debiased-Sinkhorn proxy.  Neither calculation is claimed to be exact population distributional coverage.}
\label{tab:controlled-results}
\small
\setlength{\tabcolsep}{5pt}
\begin{tabular}{@{}lrrr@{}}
\toprule
Method & Containment [95\% interval] & Mean radius & Mean regret \\
\midrule
\multicolumn{4}{@{}l}{\textit{Panel A: newsvendor, 25,000 test tasks}}\\
Empirical + bootstrap & 91.054 [90.694, 91.402]\% & 6.4756 & 0.6783 \\
Predictive + raw GBR & 95.575 [95.313, 95.823]\% & 9.9754 & 1.2223 \\
Predictive + split conformal & 95.506 [95.242, 95.756]\% & 9.9558 & 1.2223 \\
Predictive + fixed radius & 99.068 [98.941, 99.180]\% & 26.5597 & 1.2223 \\
\addlinespace
\multicolumn{4}{@{}l}{\textit{Panel B: synthetic mean--CVaR portfolio, 4,000 core tasks per method}}\\
Empirical + bootstrap & 0.000 [0.000, 0.096]\% & 0.02936 & 0.00329244 \\
Predictive + raw radius & 80.850 [79.601, 82.040]\% & 0.12956 & 0.00329341 \\
Predictive + split conformal & 95.275 [94.573, 95.890]\% & 0.13761 & 0.00329341 \\
Predictive + fixed radius & 100.000 [99.904, 100.000]\% & 1.11947 & 0.00329341 \\
\bottomrule
\end{tabular}
\end{table}

The invariance across predictive radii is structural rather than a solver artifact: with unrestricted support, the one-dimensional $W_1$ newsvendor reformulation adds the radius penalty to the robust value without changing its minimizer.  Consistent with this structure, all seven candidate radii tied and produced the same decision in every one of the 100 performance-validation cells.  Conditional diagnostics are less favorable than the pooled average: cellwise conformal rates range from 84.384\% to 98.298\%, and several contaminated small-$N$ cells have Wilson intervals entirely below 95\%.  Figure~\ref{fig:radius-calibration} therefore illustrates marginal calibration together with its dimensional and volatility-conditional limitations.

\begin{figure}[!tbp]
\centering
\includegraphics[width=\linewidth,height=.78\textheight,keepaspectratio]{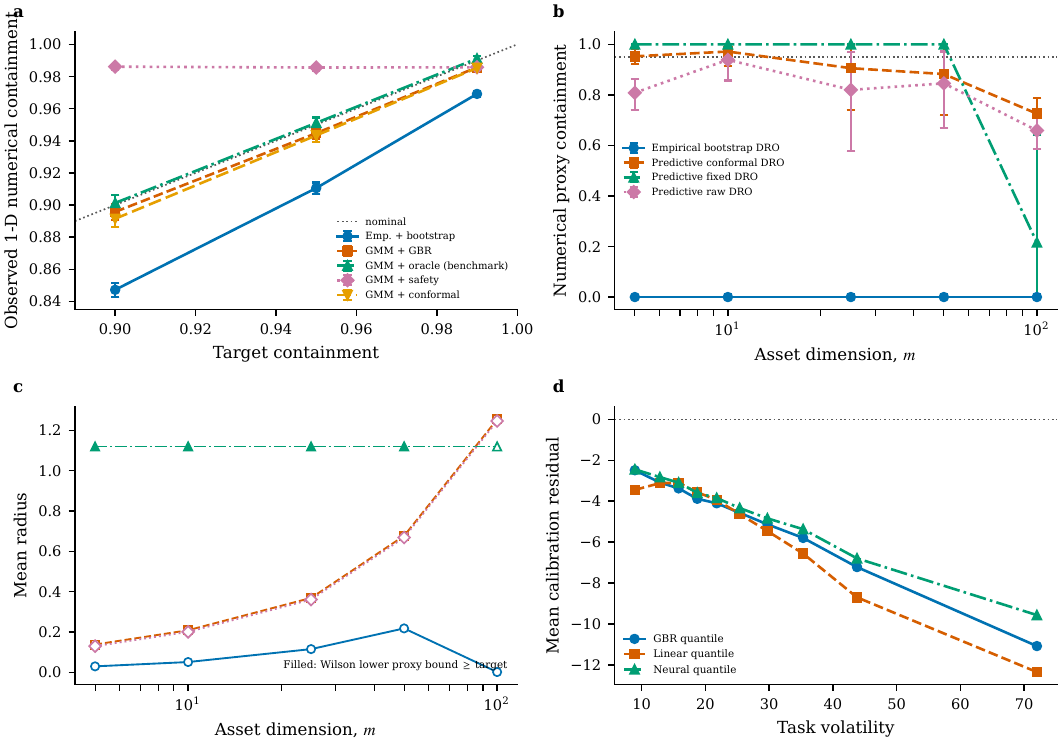}
\caption{Radius-calibration diagnostics.  Panel (a) reports pooled one-dimensional numerical containment at the three target levels.  Panel (b) reports multivariate proxy containment by dimension for selected portfolio methods.  Panel (c) reports mean radius, with filled markers indicating that the Wilson lower proxy-containment bound reaches the target.  Panel (d) relates newsvendor calibration residuals to task volatility.  Panels (b)--(c) concern a numerical Sinkhorn proxy rather than formal multivariate coverage.}
\label{fig:radius-calibration}
\end{figure}

For the saved numerical laws, all 1,714,585 recorded theorem inequalities hold at the stated tolerances: 1,019,585 normalized-regret checks, 347,500 scenario-decomposition checks, and 347,500 center--radius stability checks.  These diagnostics validate implementation of the bounds on the stored numerical objects; they do not certify exact population transport, remove approximation error, or alter the negative decision-value comparison above.

\subsubsection{Synthetic mean--CVaR portfolios}

The core portfolio design uses $m=5$, $N/m=5$, CVaR tail probability $\alpha=0.10$, risk weight $\rho=1$, the $\ell_1$ ground norm, and 1,000 paired tasks from each of four return families.  One-factor sweeps cover $m\in\{5,10,25,50,100\}$, $N/m\in\{1,2,5,10,25\}$, three tail probabilities, three risk weights, three ground norms, and four contamination rates.

Panel B of Table~\ref{tab:controlled-results} records a large proxy-calibration effect.  Conformal increases proxy containment by 14.425 percentage points relative to the raw radius and uses a radius 87.7\% smaller than the fixed rule.  Relative to empirical bootstrap, however, it uses a much larger radius.  All four methods have 100\% Monte Carlo certificate reliability, and the conformal-minus-bootstrap paired-regret difference is only $9.68\times10^{-7}$, with a paired interval approximately $[0,2.46\times10^{-6}]$, effect size $d_z=0.022$, and Wilcoxon $p=0.240$.  The experiment detects a difference in the saved discrepancy proxy but no statistically detectable improvement in paired Monte Carlo regret under the prespecified test.

This conclusion requires a material numerical qualification.  The proxy uses entropic regularization $0.01$, at most 5,000 Sinkhorn iterations, and a strict $10^{-7}$ marginal-residual criterion \citep{peyre2019computational,feydy2019interpolating}.  Across 27,200 radius-label tasks, 66.8\% of predictive and 65.8\% of empirical solves meet this criterion, and only 52.0\% meet it for both.  In a separate ten-case extended-iteration check, the saved proxy changes by at most $8.11\times10^{-6}$.  In the 12-case exact-assignment sensitivity, however, only 10 cases converge at the main regularization; among those 10, mean absolute error is 0.012177 and mean relative error is 25.368\%.  The converged-only core conformal rate is 95.729\%, compared with 95.275\% in the full sample, but this post hoc subset does not repair nonconverged training and calibration labels.  We therefore interpret every multivariate containment result only as an entropically regularized proxy.  Figure~\ref{fig:sinkhorn-appendix} reports the complete sensitivity analysis.  All 337,600 optimization rows are feasible within $9.998\times10^{-9}$.

\subsection{Contextual and sequential distribution shift}

The independent contextual experiment draws 1,000 tasks in each of four conditions: interpolation, an unseen-low covariate region, an unseen-high region, and deliberate predictive misspecification.  In this benchmark, the fitted conditional RealNVP does not yield an aggregate advantage over the unconditional empirical center.  Conditional RealNVP with split conformal attains 70.325\% proxy containment and mean regret 0.002374, compared with 88.800\% and 0.002162 for unconditional empirical split conformal.  The paired conditional-minus-unconditional effects are $-18.475$ percentage points in containment (95\% interval $[-19.925,-17.075]$; $p=8.51\times10^{-119}$) and $2.121\times10^{-4}$ in regret ($[1.915,2.326]\times10^{-4}$; $p=1.13\times10^{-80}$).

\begin{table}[H]
\centering
\caption{Independent contextual-shift results at the 95\% target.  All reliability entries are numerical discrepancy-proxy rates over 1,000 test tasks per condition; regret is averaged over all 4,000 tasks.}
\label{tab:contextual-results}
\footnotesize
\setlength{\tabcolsep}{2.7pt}
\begin{tabular}{@{}lrrrrrr@{}}
\toprule
Method & Aggregate & Interpolation & Misspecified & Unseen high & Unseen low & Mean regret \\
\midrule
Unconditional empirical + conformal & 88.800\% & 95.3\% & 68.9\% & 95.5\% & 95.5\% & 0.002162 \\
Conditional RealNVP + conformal & 70.325\% & 94.7\% & 19.4\% & 76.2\% & 91.0\% & 0.002374 \\
Conditional RealNVP + safety & 81.375\% & 99.1\% & 33.2\% & 95.6\% & 97.6\% & 0.002384 \\
\bottomrule
\end{tabular}
\end{table}

The failure is concentrated in extrapolation and misspecification.  Conditional conformal is close to target under interpolation (94.7\%) but falls to 91.0\% in the unseen-low region, 76.2\% in the unseen-high region, and 19.4\% under misspecification.  The safety margin raises aggregate containment to only 81.375\%.  Figure~\ref{fig:context-independent} shows that a smaller conditional radius is not evidence of greater reliability.

\begin{figure}[!tbp]
\centering
\includegraphics[width=\linewidth,height=.78\textheight,keepaspectratio]{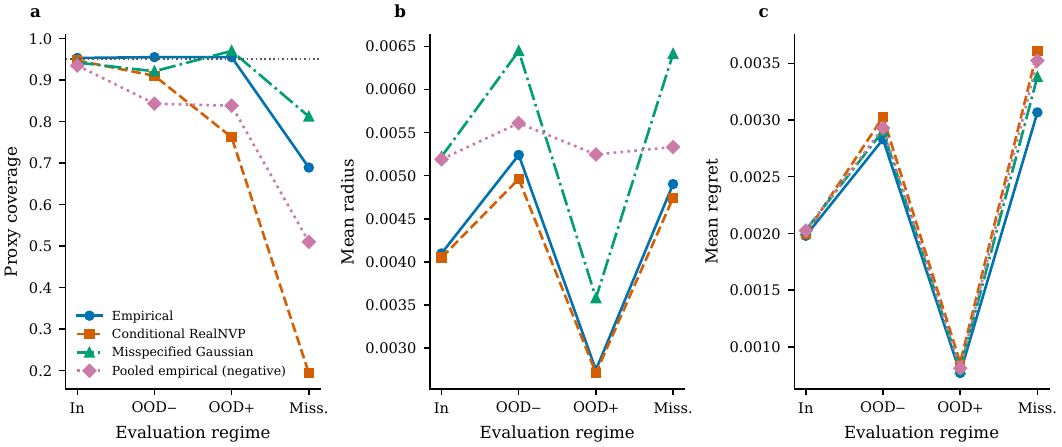}
\caption{Independent contextual-shift experiment.  The panels report numerical proxy containment, radius, regret, and reliability diagnostics across interpolation, unseen covariate regions, and predictive misspecification.  Error bars are 95\% Wilson intervals for binary task-level quantities.  These are conditional multivariate discrepancy proxies, not exact distributional coverage.}
\label{fig:context-independent}
\end{figure}

The 1,500-step sequential trajectory produces a different conclusion.  The adaptive controller covers 1,423 time points (94.867\%) and recovers 123 observations after the shift; exponential weighting covers 1,420 (94.667\%) and recovers after 292.  The fixed exchangeable-split diagnostic covers only 890 points (59.333\%) and has no recorded recovery.  Figure~\ref{fig:context-sequential} shows that both adaptive procedures expand the radius after the abrupt change and contract it as the system stabilizes.  Because these observations belong to one dependent trajectory, the reported rates are adaptation diagnostics under the corresponding sequential assumptions, not exchangeable split-conformal guarantees.

\begin{figure}[!tbp]
\centering
\includegraphics[width=\linewidth,height=.78\textheight,keepaspectratio]{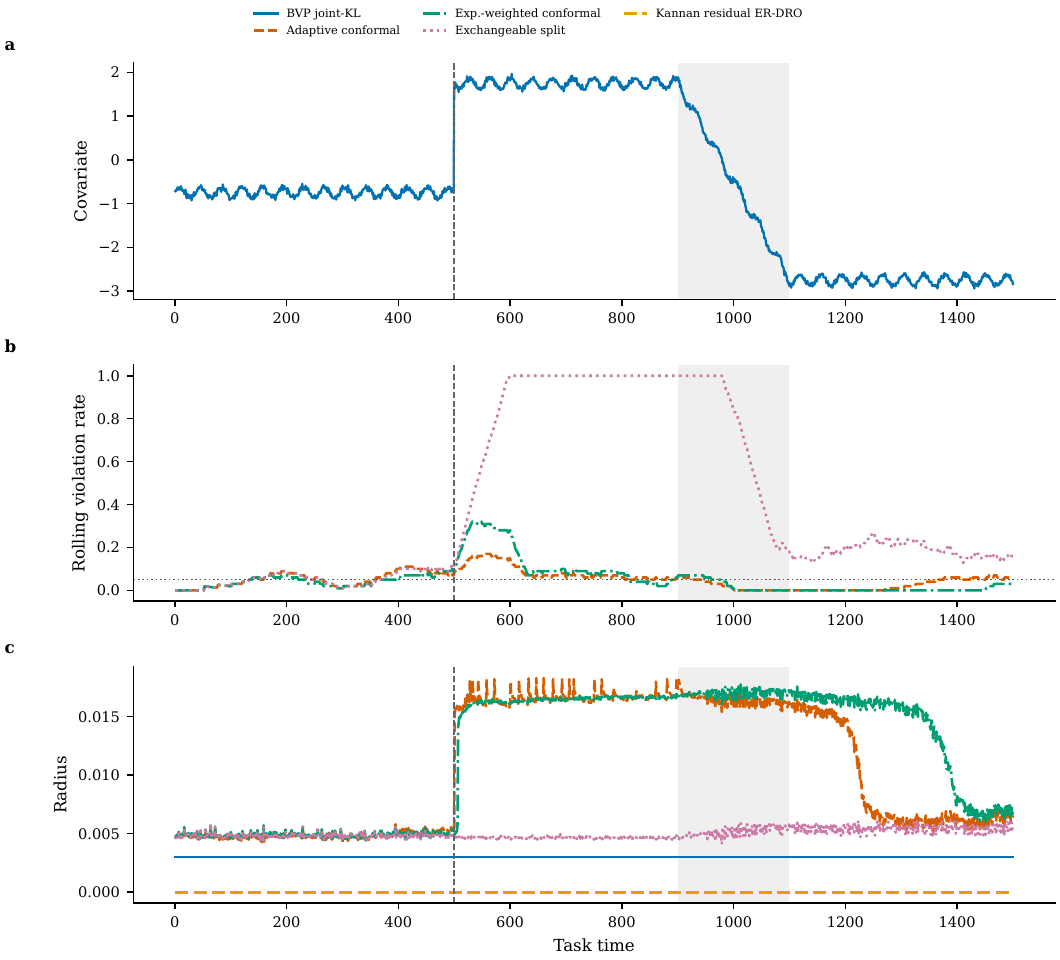}
\caption{Sequential adaptation under abrupt and gradual regime changes.  The panels align the latent regime, time-indexed proxy-containment indicators, rolling violation rates, and adaptive radii.  Recovery time is measured from the frozen abrupt-shift date.  Weighted and adaptive procedures are evaluated under nonexchangeable sequential assumptions; the fixed split rule is a diagnostic only.}
\label{fig:context-sequential}
\end{figure}

\FloatBarrier
\subsection{Rolling real-data portfolios}

The real-data study uses daily value-weighted Fama--French 49-industry returns from the \href{https://mba.tuck.dartmouth.edu/pages/faculty/ken.french/data_library.html}{Kenneth R. French Data Library} and adjusted-close prices from the Yahoo Finance public chart endpoint for SPY, EFA, EEM, IEF, TLT, LQD, SHY, TIP, VNQ, and GLD.  The available raw responses, all processed panels, and acquisition metadata are archived; the historical TIP raw-payload mismatch is documented below.  Data are intersected without return imputation.  The primary backtest spans December 2007 through December 2025, with a 756-day trailing training window, 126-day pseudo-task training blocks, 21-day proxy-label blocks, six chronological performance-validation tasks, 12 chronological calibration tasks, monthly rebalancing, and next-calendar-month evaluation.  The main specification uses $\alpha=0.10$, $\rho=1$, the $\ell_1$ ground norm, and 10 basis-point transaction costs; 5 and 25 basis points are sensitivities.

Table~\ref{tab:real-results} reports the main decision and reliability outcomes.  Gaussian safety reaches future-block proxy rates of 93.981\% for FF49 and 95.370\% for ETFs.  Its operational reliability is 91.667\% for FF49, whose Wilson interval $[87.213,94.664]\%$ lies below the target, and 98.611\% for ETFs, with interval $[95.997,99.527]\%$.  The corresponding net returns are 10.380\% (21-day moving-block interval 0.959--20.773\%) and 5.463\% (0.858--9.987\%).  These return intervals overlap competing methods; no stored paired test supports return, Sharpe, or risk dominance.

\begin{table}[H]
\centering
\caption{Rolling portfolio results at 10 basis-point transaction costs.  Proxy and operational columns are task rates over 216 monthly rebalances.  Return, volatility, Sharpe ratio, and maximum drawdown use net daily returns.  Dashes suppress radius and operational-certificate diagnostics for non-DRO benchmarks, even where an auxiliary nominal-objective diagnostic is available in the replication tables.}
\label{tab:real-results}
\footnotesize
\setlength{\tabcolsep}{2.6pt}
\begin{tabular}{@{}llrrrrrr@{}}
\toprule
Universe & Method & Proxy & Operational & Return & Volatility & Sharpe & Max. drawdown \\
\midrule
FF49 & Equal weight & -- & -- & 10.25\% & 20.96\% & 0.507 & $-55.30$\% \\
FF49 & Empirical bootstrap & 0.00\% & 80.56\% & 8.99\% & 16.85\% & 0.517 & $-47.81$\% \\
FF49 & Predictive fixed & 79.17\% & 88.43\% & 10.40\% & 20.18\% & 0.526 & $-53.63$\% \\
FF49 & Gaussian raw & 77.31\% & 87.50\% & 10.39\% & 20.13\% & 0.526 & $-53.44$\% \\
FF49 & Gaussian safety & 93.98\% & 91.67\% & 10.38\% & 20.83\% & 0.515 & $-55.13$\% \\
\addlinespace
ETFs & Equal weight & -- & -- & 5.44\% & 9.39\% & 0.470 & $-27.99$\% \\
ETFs & Empirical bootstrap & 0.00\% & 91.67\% & 5.29\% & 5.90\% & 0.679 & $-19.51$\% \\
ETFs & Predictive fixed & 82.87\% & 98.61\% & 5.08\% & 8.38\% & 0.475 & $-24.23$\% \\
ETFs & Gaussian raw & 80.56\% & 98.61\% & 5.06\% & 8.34\% & 0.474 & $-23.87$\% \\
ETFs & Gaussian safety & 95.37\% & 98.61\% & 5.46\% & 9.36\% & 0.473 & $-27.68$\% \\
\bottomrule
\end{tabular}
\end{table}

Safety adjustment raises the future-block proxy relative to raw and fixed radii by 16.67 and 14.81 percentage points for FF49 and by 14.81 and 12.50 points for ETFs.  These are descriptive paired-rebalance differences, not inferential dominance claims, and the resulting portfolios remain close to equal weight in risk and return.  Figure~\ref{fig:real-reliability} shows that violation rates vary across methods and universes, while Figure~\ref{fig:real-risk-return} places the same methods on the net risk--return plane with moving-block uncertainty.  The Pflug--Wozabal and Blanchet--Chen--Zhou comparators are retained in the replication tables as decision-performance references only \citep{pflug2007ambiguity,blanchet2022portfolio}: their objectives differ from the manuscript's mean--CVaR certificate, and the latter is exact only on the frozen capped support.

\begin{figure}[!tbp]
\centering
\includegraphics[width=\linewidth,height=.78\textheight,keepaspectratio]{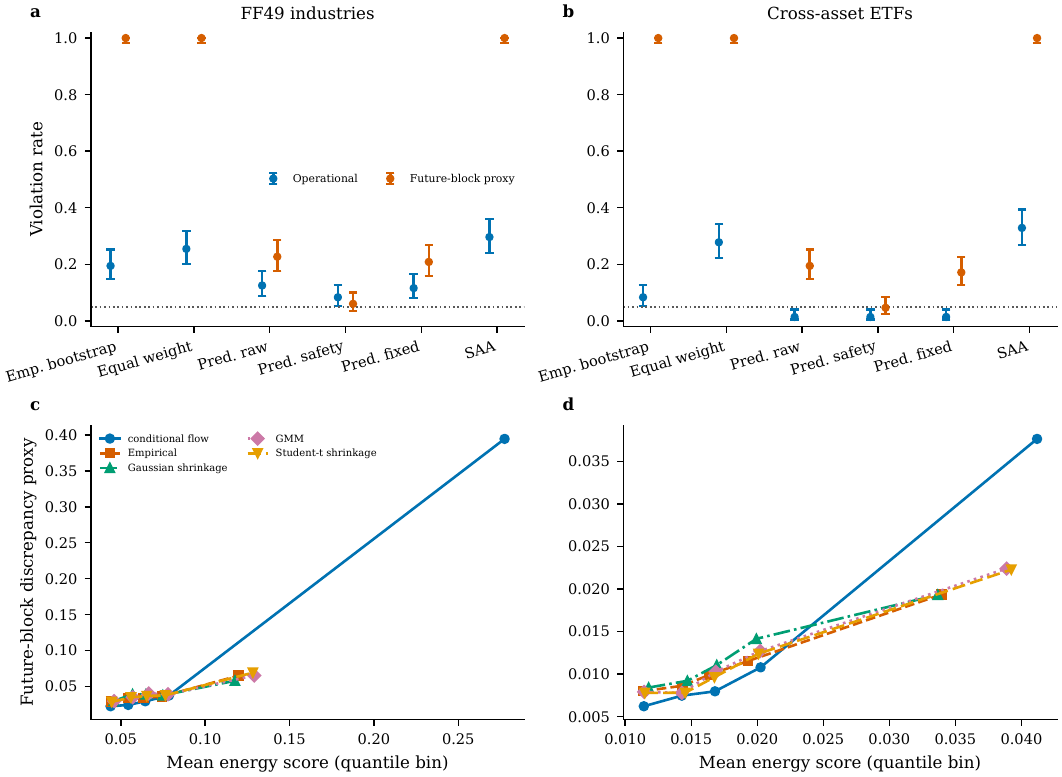}
\caption{Rolling real-data reliability diagnostics.  Panels (a)--(b) report operational and future-block violation rates by method for the FF49 and ETF universes; error bars are 95\% Wilson intervals and the dotted line is the 5\% target violation rate.  Panels (c)--(d) relate the future-block discrepancy proxy to mean energy score across quantile bins.  Future blocks are noisy realized proxies and do not identify coverage of the unknown contemporaneous return law.}
\label{fig:real-reliability}
\end{figure}

\begin{figure}[!tbp]
\centering
\includegraphics[width=\linewidth,height=.78\textheight,keepaspectratio]{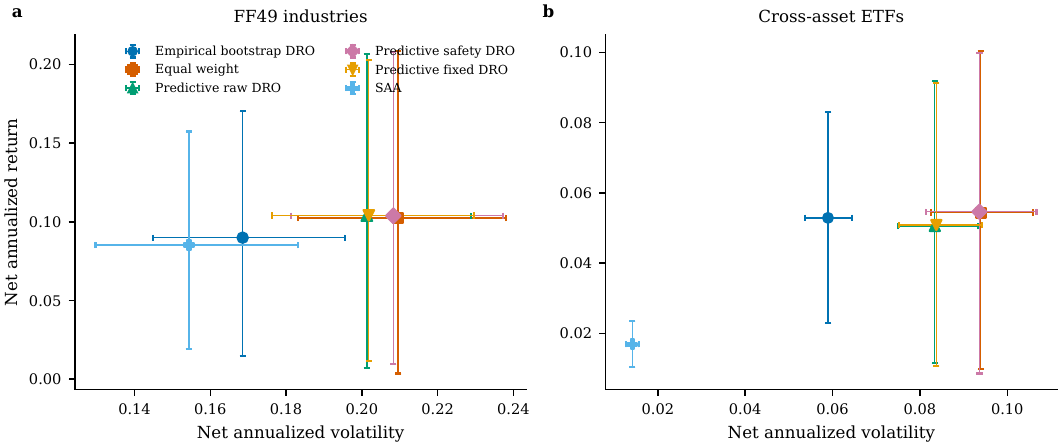}
\caption{Net risk--return comparison at the primary 10 basis-point transaction cost.  Points report annualized net return against annualized net volatility; horizontal and vertical error bars use the frozen 21-day moving-block bootstrap for serial returns.}
\label{fig:real-risk-return}
\end{figure}

Two limitations are visible only because the chronological design is enforced.  First, with 12 calibration pseudo-tasks the corrected 95\% conformal index is $\lceil13(0.95)\rceil=13$, so the finite-sample-correct radius is infinite.  Five corrected or weighted conformal variants are consequently inactive in every primary real-data cell rather than being reported as successful finite-radius methods.  Second, the Gaussian-safety strategy has negative annualized net returns in each frozen stress window: $-59.87$\%, $-61.86$\%, and $-15.33$\% for FF49 and $-25.74$\%, $-18.86$\%, and $-23.50$\% for ETFs during the global financial crisis, COVID crash, and inflation stress, respectively.  These short-window annualizations are descriptive, not tests of universal loss.

The raw-data audit found one reproducibility qualification.  The current TIP payload is 47 bytes smaller than the payload recorded in the acquisition manifest and has a different SHA-256 hash, although all 16 frozen processed CSV/Parquet files match their manifest and every real-result row carries the locked data signature.  Replacing TIP with returns parsed from the current raw cache changes the signature and changes a daily return by at most $2.79\times10^{-6}$.  The reported experiment is therefore reproducible from the intact frozen processed panel, but not byte-for-byte from the present raw cache alone.

\subsection{Ablations, computation, and synthesis}

The complete one-factor ablations vary center, radius rule, scenario count, calibration size, proper-training/calibration split, radius features, model capacity, ground norm, feature scaling, contamination, and transaction costs; they also include an uninformative negative-control predictor.  Figures~\ref{fig:ablation-main-appendix} and \ref{fig:ablation-extra-appendix} report these results, while Figures~\ref{fig:frontier-radius-appendix}--\ref{fig:frontier-regret-appendix} show the reliability frontiers.  The conclusions are stable across these checks: conformal is far less conservative than the fixed radius, but is not uniformly more efficient than raw or bootstrap rules once reliability and decision loss are considered jointly.

Computational diagnostics are reported in Figure~\ref{fig:computation-appendix}.  Every synthetic portfolio row is feasible at the stated tolerance.  The contextual stage records two failed attempted solves, and the Blanchet--Chen--Zhou SLSQP comparator fails in 120 of 6,411 attempts, with maximum residual $2.77\times10^{-4}$; these failures are retained rather than silently dropped.  Exact one-dimensional transport and entropic multivariate transport are reported under different names throughout.

Taken together, the six prespecified questions receive mixed answers.  Predictive-center accuracy improves in selected families but not uniformly.  Increasing $M$ reduces scenario error but leaves measurable undercoverage at $10N$.  Calibration succeeds for the pooled newsvendor estimand and the saved synthetic proxy, but fails under important contextual shifts and is unavailable at finite radius in the main chronological conformal backtest.  Learned radii are efficient relative to a deliberately conservative fixed rule, not uniformly relative to raw or bootstrap radii.  Most importantly, the experiments do not support a general decision-value claim: calibration changes reliability far more than regret or realized financial performance.  Adaptive sequential procedures recover after the abrupt shift, but severe misspecification and real stress remain substantive failure modes.

\FloatBarrier

\section{Limitations and Scope}

The finite-sample certificate transfer developed in this paper is deliberately modular.  If a random ambiguity set contains the data-generating law, then the corresponding robust value is a valid upper certificate; this implication does not, by itself, constitute a general learning theorem for the center or radius.  The substantive statistical problem is to construct a small ambiguity set satisfying $\Prob\{\dW(P,\Pbar)\le\epshat\}\ge1-\beta$.  The oracle target $R_N=\dW(P,\Pbar)$ is unavailable in an ordinary single-environment application.  Supervised radius learning therefore requires simulated tasks with known laws, repeated environments, or an observable proxy whose approximation error can be controlled.  Without one of these structures, predictive accuracy alone does not establish distributional validity.

The strength of the empirical evidence differs across experiments.  In the newsvendor setting, $W_1$ is evaluated numerically from quantiles and the experiment directly studies the intended containment event whenever the calculation is determinate, subject to uncertified grid error.  The multivariate portfolio and contextual experiments instead use a fixed-subsample, entropically regularized debiased-Sinkhorn discrepancy.  These quantities are useful diagnostics but are not exact Wasserstein distances.  In the synthetic portfolio audit, only 52.0\% of paired label calculations meet the strict saved convergence criterion.  In the 12-case exact-assignment sensitivity, 10 cases converge at the main regularization; their mean absolute error relative to exact assignment is 0.0122, or 25.4\% in relative terms.  We therefore make no exact multivariate distributional-coverage claim.  Finite scenario generation is also a separate approximation: in the fixed-radius newsvendor scenario diagnostic, numerical containment remains 94.225\%, below the 95\% target, at $M=10N$.

The conformal conclusions are assumption-specific.  Ordinary split conformal supplies marginal validity across exchangeable tasks; it does not imply conditional validity within every distribution family, volatility level, or dimension.  This distinction is visible empirically: pooled calibration can be close to nominal while important subgroups remain undercovered.  The contextual trajectory is nonexchangeable, so the weighted and adaptive procedures evaluated there have sequential, assumption-dependent interpretations rather than the exchangeable guarantee proved for split conformal.  In the rolling financial experiment, a future return block is only a noisy proxy for the contemporaneous law $P_t$.  Moreover, 12 calibration pseudo-tasks are insufficient to form a finite corrected 95\% conformal order statistic.  The resulting infinite radii and inactive conformal variants are a finite-sample limitation, not evidence of real-data coverage.

Calibration should not be conflated with decision improvement.  In the unrestricted-support $W_1$ newsvendor model, the radius contributes an additive Lipschitz penalty and does not alter the optimizer.  In the synthetic portfolio core, calibration changes proxy containment but yields no detectable regret improvement.  The conditional contextual model performs worse in aggregate than the unconditional empirical benchmark and fails sharply under predictive misspecification.  In the rolling portfolios, moving-block return intervals overlap across leading methods, and no stored paired return test supports performance dominance.  These experiments therefore establish neither universal radius efficiency nor blanket superiority of predictive DRO.

The financial study considers long-only, fully invested portfolios, monthly rebalancing, proportional transaction costs, two asset universes, and one historical period.  It omits market impact, taxes, borrowing constraints, short sales, and implementation-capacity effects.  The Pflug--Wozabal and Blanchet--Chen--Zhou comparators solve different objectives and thus provide decision-performance benchmarks rather than like-for-like comparisons of mean--CVaR certificates.  The reported results are tied to hash-locked processed inputs.  A provenance audit found that the current cached raw TIP response does not byte-match the acquisition hash, although every processed input and final experiment signature remains intact; hence the backtest is reproducible from the frozen processed panel, but the raw payload cannot presently be reconstructed byte-for-byte.

\section{Conclusion}

This paper develops a modular framework for Wasserstein DRO with a learned nominal distribution and a learned, data-dependent radius.  The theory shows that empirical centering is not essential: reliability follows from calibrated coverage of the actual center used by the optimizer.  It provides finite-sample and asymptotic guarantees for general random centers, tractable reformulations for non-uniform discrete laws, a decomposition of predictive-model and scenario-discretization errors, stability under joint center--radius perturbations, high-probability and expected-regret bounds, and pointwise optimality of the oracle conditional-quantile radius.  A finite-sample-correct split-conformal construction supplies marginal calibration across exchangeable tasks, while the distinction between distributional and certificate-targeted radii clarifies what each guarantee does and does not imply.

The experiments support a more qualified conclusion than uniform superiority of learned ambiguity sets.  In the one-dimensional benchmark, split conformal attains 95.506\% pooled numerical containment at the 95\% target, but the raw learned radius already attains 95.575\%, and both predictive rules have higher regret than empirical bootstrap.  In the synthetic mean--CVaR core, conformal calibration raises the saved Sinkhorn-proxy containment rate from 80.850\% to 95.275\% and is substantially less conservative than the fixed radius, yet it produces no detectable regret improvement.  Under contextual shift, the conditional model reaches only 70.325\% aggregate proxy containment compared with 88.800\% for the unconditional empirical benchmark.  After the abrupt shift, the adaptive controller and exponentially weighted rule recover after 123 and 292 observations, respectively, whereas the fixed exchangeable-split diagnostic has no recorded recovery.  In the real-data study, safety adjustment improves the future-block proxy in both universes and raises operational reliability for FF49, while tying the raw and fixed rules at 98.611\% for ETFs; return intervals overlap competing methods and do not support performance dominance.

The combined message is that calibration can support statistical reliability but does not guarantee decision value.  Center quality, scenario approximation, radius calibration, optimization structure, and distribution shift are separate links; improving one need not improve the final decision.  Learned ambiguity sets are most compelling when side information reduces center error, the radius target is observed or conservatively approximated, calibration assumptions match deployment, and decisions respond to lower conservatism.  Priorities for future work are scalable discrepancy estimators with controlled approximation error, conditional and sequential calibration under weaker dependence, and decision-aware radius learners with an explicit distributional interpretation.  This framework evaluates such advances without conflating predictive accuracy, ambiguity-set coverage, and downstream performance.

\appendix

\section{Additional Linear Reformulation for Polyhedral Support}

Suppose $\XiSet=\{\xi:C\xi\le d\}$ and
\begin{equation}
    \ell(\xi)=\max_{k\le K}\{\inner{a_k}{\xi}+b_k\}.
\end{equation}
Then Theorem \ref{thm:convex-reform-general} yields
\begin{equation}\label{eq:pwa-polytope}
\begin{aligned}
\Phi_\varepsilon(\ell;\Pbar)=
\inf_{\lambda,s_j,\gamma_{jk}}\quad
&\lambda\varepsilon+\sum_{j=1}^Mp_js_j\\
\mathrm{s.t.}\quad
&b_k+\inner{a_k}{\zeta_j}+\inner{\gamma_{jk}}{d-C\zeta_j}\le s_j,
\qquad \forall j,k,\\
&\norm{C^\top\gamma_{jk}-a_k}_*\le\lambda,
\qquad \forall j,k,\\
&\gamma_{jk}\ge0,
\qquad \forall j,k.
\end{aligned}
\end{equation}
If $\norm{\cdot}$ is the $\ell_1$ or $\ell_\infty$ norm, this becomes a linear program.

\section{Algorithmic Templates}

\begin{algorithm}[ht]
\caption{Predictive-center Wasserstein DRO}
\begin{algorithmic}[1]
\Require Training data $\mathcal D_N$, confidence level $1-\beta$, predictive distribution model $G_\theta$, radius rule $\varepsilon_N$.
\State Fit $G_\theta$ on $\mathcal D_N$.
\State Generate predictive distribution $\Ptil=\sum_{j=1}^Mp_j\delta_{\zeta_j}$.
\State Compute or predict radius $\varepsilon_N$.
\State Solve $\inf_{x\in X}\sup_{Q\in B_{\varepsilon_N}(\Ptil)}\E_Q[h(x,\xi)]$ using the dual reformulation.
\State Return decision $x_N$, certificate $J_N$, center error proxy, and radius.
\end{algorithmic}
\end{algorithm}

\begin{algorithm}[ht]
\caption{Conformal learned radius across exchangeable tasks}
\begin{algorithmic}[1]
\Require Proper-training tasks for fitting $f_\theta$; separate calibration tasks $t=1,\ldots,T$ with features $Z_t$ and target radii $R_t$; new feature $Z_{\mathrm{new}}$; confidence level $1-\beta$.
\State Fit $f_\theta$ using only the proper-training tasks.
\State Compute calibration residuals $S_t=R_t-f_\theta(Z_t)$.
\State Set $k=\lceil(T+1)(1-\beta)\rceil$ and let $q_{T,\beta}=S_{(k)}$ if $k\le T$ (otherwise $q_{T,\beta}=+\infty$).
\State Set $\widehat\varepsilon_{\mathrm{new}}=\max\{0,f_\theta(Z_{\mathrm{new}})+q_{T,\beta}\}$.
\State Solve DRO with radius $\widehat\varepsilon_{\mathrm{new}}$.
\end{algorithmic}
\end{algorithm}

\clearpage
\section{Additional Experimental Exhibits}\label{app:experimental-exhibits}

This appendix collects the diagnostic, real-data path, ablation, and computational exhibits omitted from the main text.  All figures use the frozen protocol and inferential conventions of Section~\ref{sec:experiments}.  Multivariate containment remains an entropically regularized discrepancy proxy, and future-block financial quantities remain realized proxies rather than distributional-coverage estimates.  Complete task-level outputs and publication tables remain in the replication archive.

\begin{figure}[!htbp]
\centering
\includegraphics[width=\linewidth,height=.78\textheight,keepaspectratio]{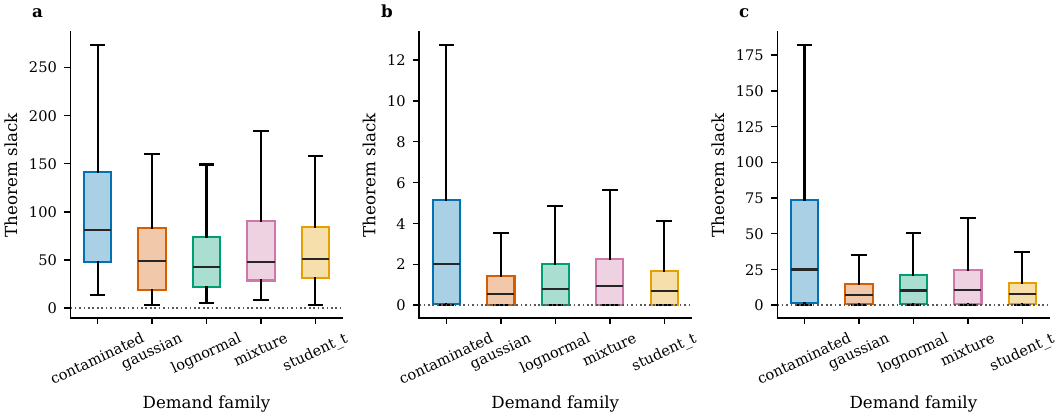}
\caption{Numerical verification of the newsvendor bounds.  Panels (a)--(c) summarize, by demand family, the slack in the normalized-regret bound, the scenario-error triangle inequality, and the center--radius stability bound, respectively.  Nonnegative slack indicates satisfaction.  All 1,714,585 recorded checks pass at the frozen numerical tolerance.}
\label{fig:theorem-checks-appendix}
\end{figure}

\begin{figure}[p]
\centering
\includegraphics[width=\linewidth,height=.78\textheight,keepaspectratio]{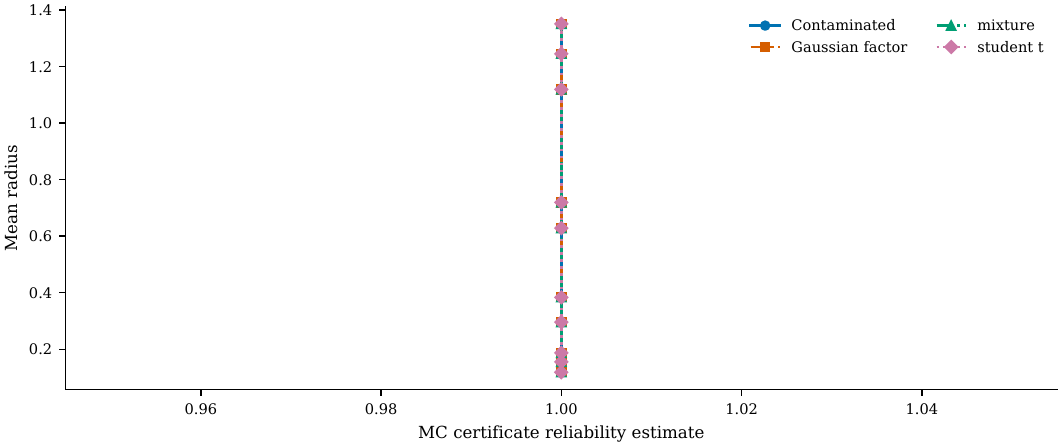}
\caption{Reliability--radius frontier in the paired synthetic mean--CVaR experiment.  Reliability is the Monte Carlo certificate-reliability estimate, and radius is averaged over paired tasks.  All core configurations attain an estimated reliability of one, so the figure distinguishes conservatism but provides no statistical separation in certificate reliability.}
\label{fig:frontier-radius-appendix}
\end{figure}

\begin{figure}[p]
\centering
\includegraphics[width=\linewidth,height=.78\textheight,keepaspectratio]{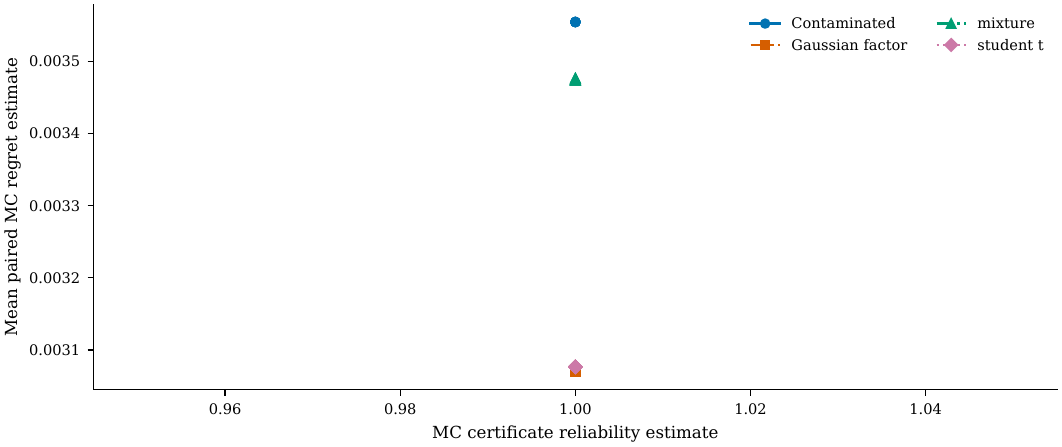}
\caption{Reliability--regret frontier in the paired synthetic mean--CVaR experiment.  Regret is evaluated on the common paired Monte Carlo sample.  The near-coincident points show that large differences in discrepancy-proxy containment do not translate into detectable differences in core decision loss.}
\label{fig:frontier-regret-appendix}
\end{figure}

\begin{figure}[p]
\centering
\includegraphics[width=\linewidth,height=.78\textheight,keepaspectratio]{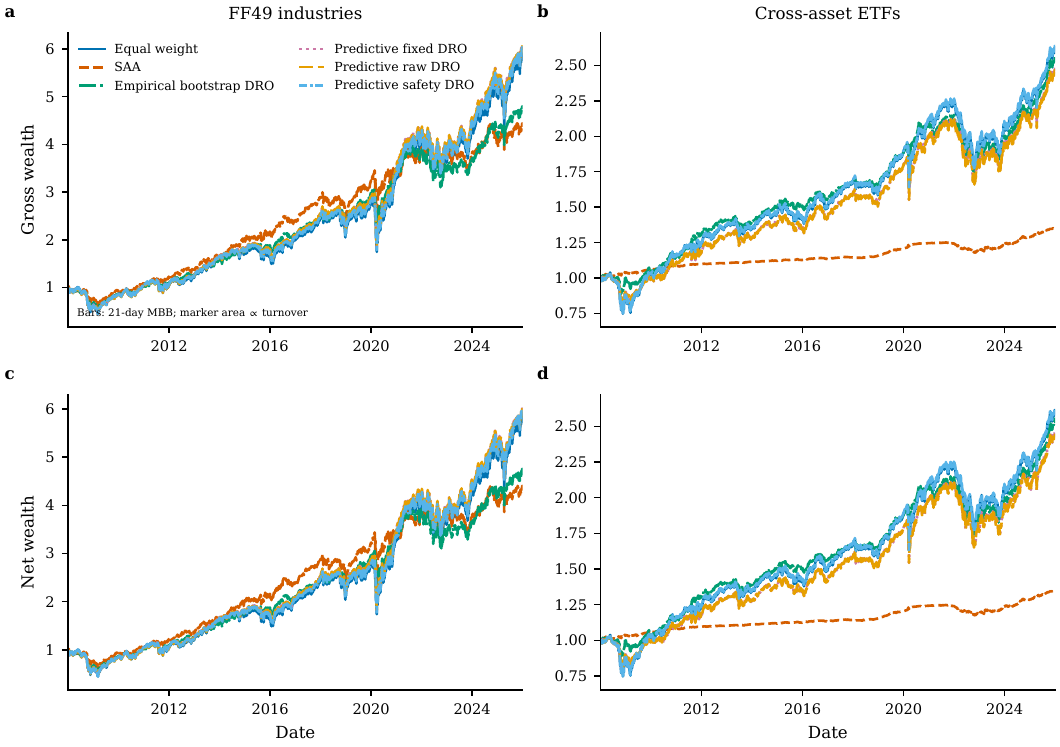}
\caption{Cumulative wealth over the common rolling evaluation period.  Panels (a) and (b) report gross wealth for the FF49 and cross-asset ETF universes; panels (c) and (d) report the corresponding net wealth after the frozen proportional transaction cost.  Each series uses the next-calendar-month holdings of the main chronological backtest.}
\label{fig:wealth-appendix}
\end{figure}

\begin{figure}[p]
\centering
\includegraphics[width=\linewidth,height=.78\textheight,keepaspectratio]{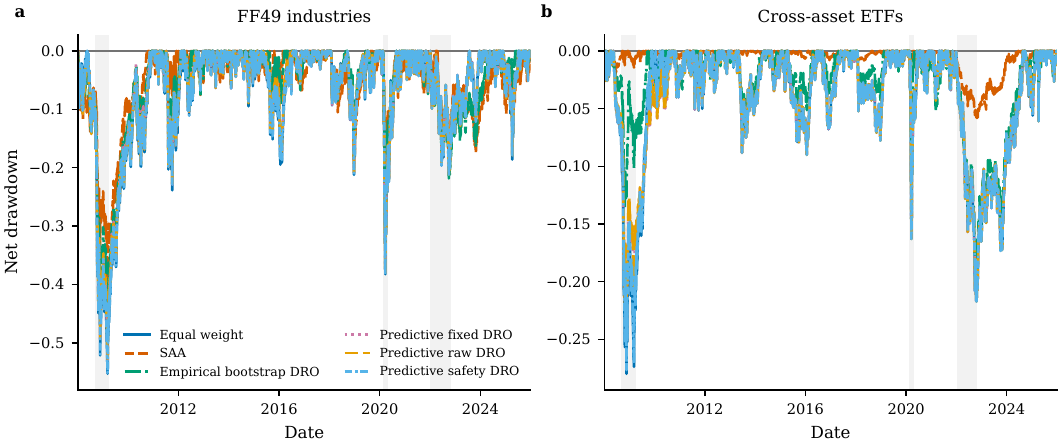}
\caption{Net drawdown paths for (a) the FF49 universe and (b) the cross-asset ETF universe.  Shading marks the prespecified global-financial-crisis, COVID-crash, and inflation-stress windows.  These paths are descriptive historical outcomes, not independent stress-test replications.}
\label{fig:drawdown-appendix}
\end{figure}

\begin{figure}[p]
\centering
\includegraphics[width=\linewidth,height=.78\textheight,keepaspectratio]{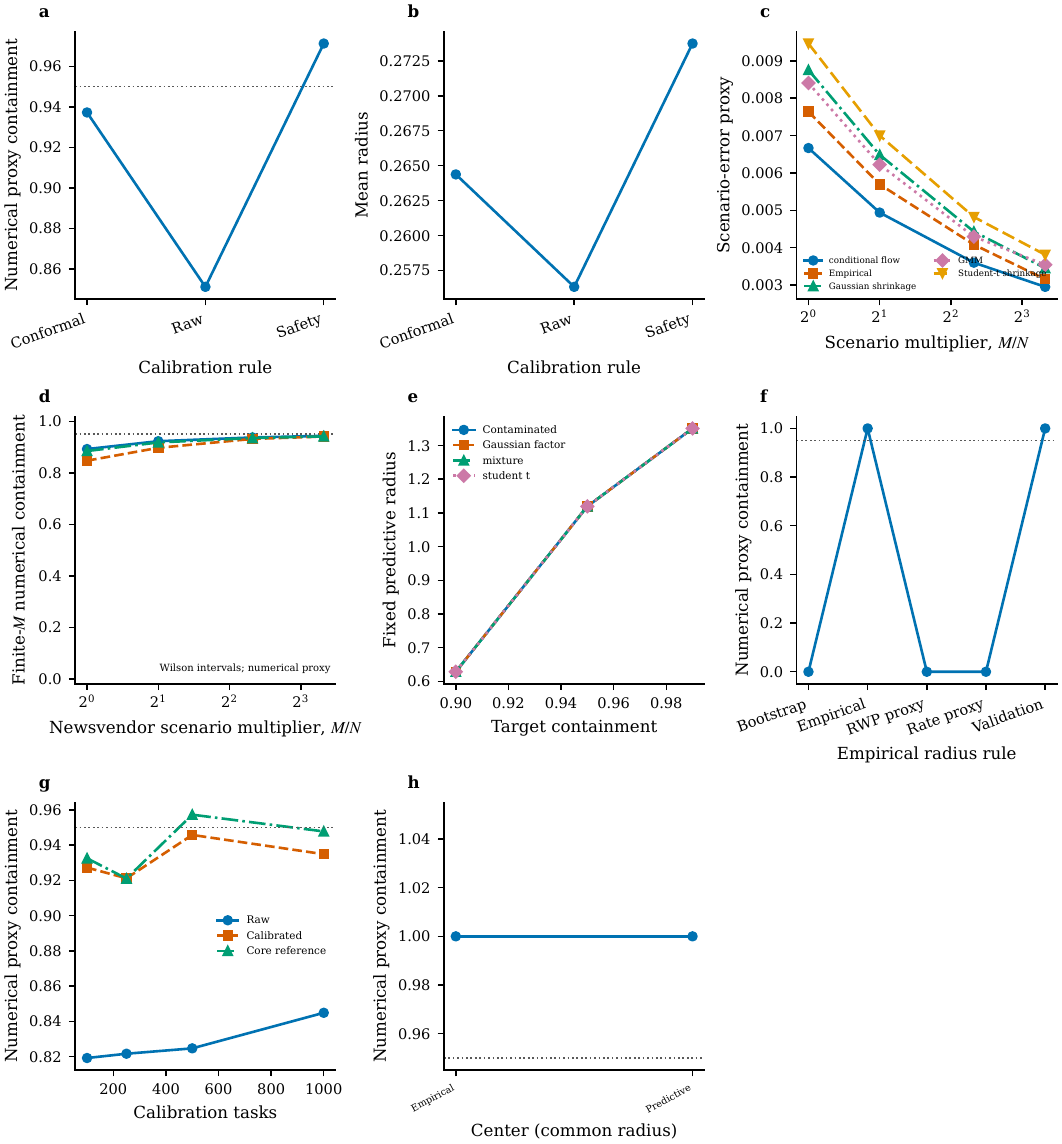}
\caption{Primary ablations.  Panels (a)--(b) vary the calibration rule; (c)--(d) vary the scenario multiplier; (e) varies the target used to construct the fixed predictive radius; (f) compares empirical-radius rules; (g) varies the calibration-task count; and (h) compares empirical and predictive centers at a common radius.  Every containment ordinate is explicitly a numerical proxy.}
\label{fig:ablation-main-appendix}
\end{figure}

\begin{figure}[p]
\centering
\includegraphics[width=\linewidth,height=.78\textheight,keepaspectratio]{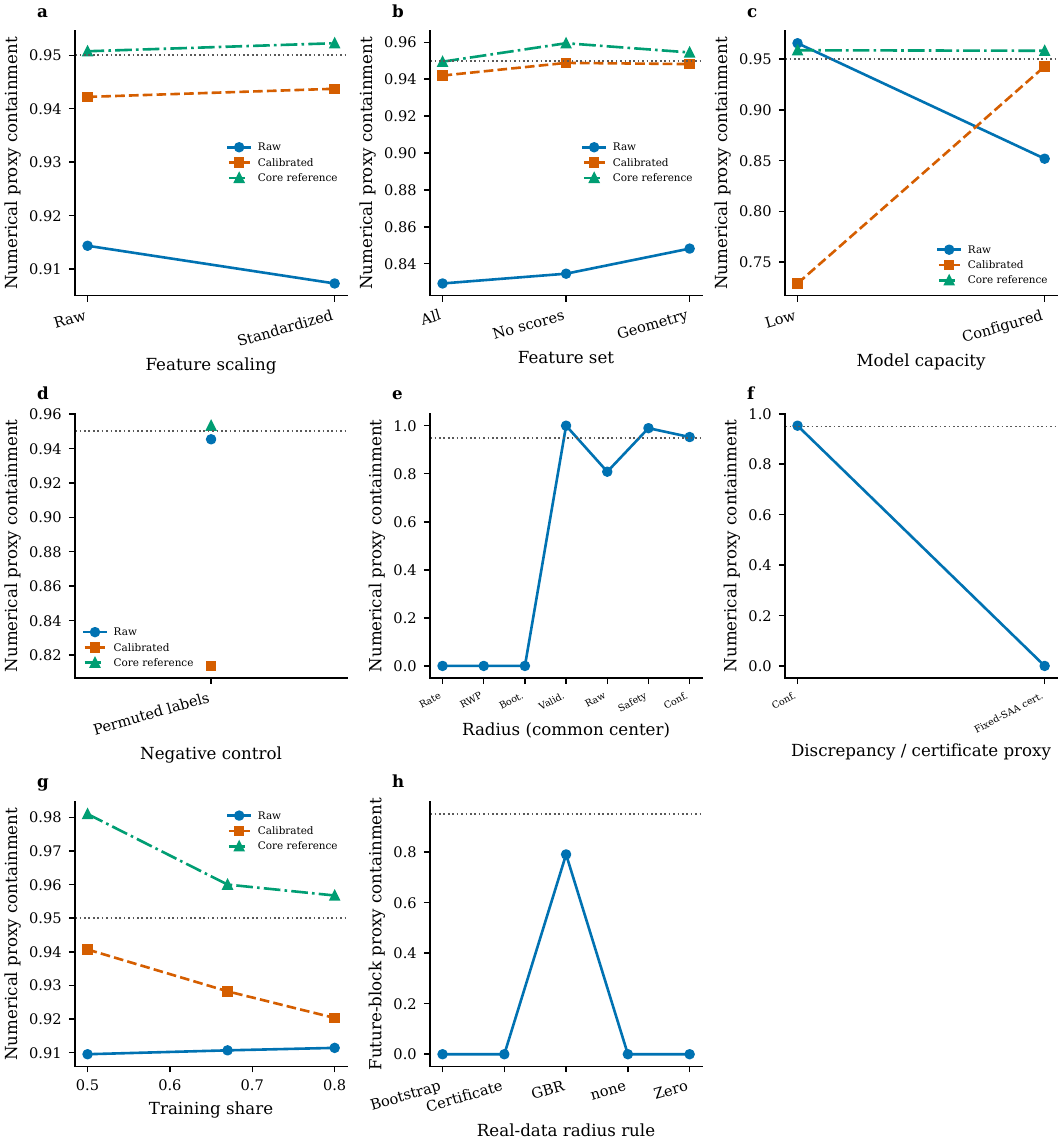}
\caption{Additional ablations.  Panels (a)--(c) vary feature scaling, feature selection, and model capacity; (d) permutes the labels as a negative control; (e) compares radius rules at a common center; (f) contrasts discrepancy and certificate proxies; (g) varies the proper-training share; and (h) compares real-data radius rules using the future-block proxy.  Complete numerical rows are retained in the replication archive.}
\label{fig:ablation-extra-appendix}
\end{figure}

\begin{figure}[p]
\centering
\includegraphics[width=\linewidth,height=.78\textheight,keepaspectratio]{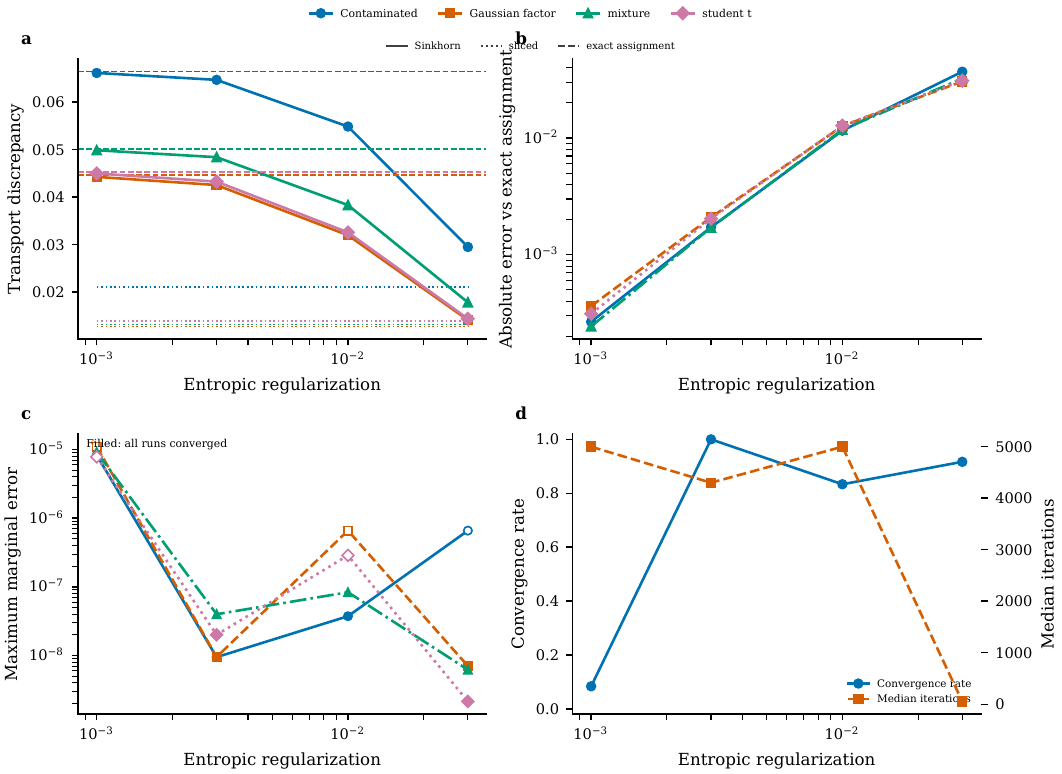}
\caption{Sensitivity of the multivariate discrepancy proxy to entropic regularization.  Panel (a) compares the debiased-Sinkhorn value with sliced-$W_1$ and exact-assignment references; (b) reports absolute error relative to exact assignment; (c) reports the maximum marginal error, with open markers denoting cases in which not all runs converged; and (d) reports convergence rates and median iteration counts.  The exact-assignment discrepancy demonstrates why multivariate containment is interpreted as a numerical proxy rather than exact Wasserstein coverage.}
\label{fig:sinkhorn-appendix}
\end{figure}

\begin{figure}[p]
\centering
\includegraphics[width=\linewidth,height=.78\textheight,keepaspectratio]{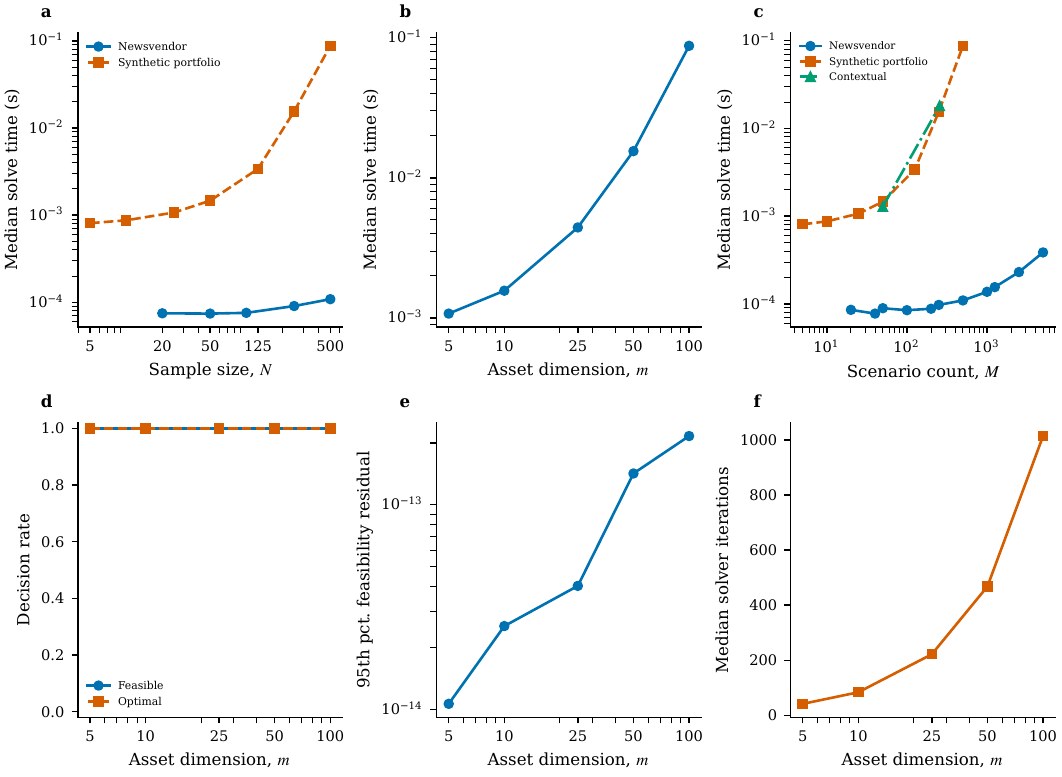}
\caption{Computational scaling and numerical diagnostics.  Panels (a)--(c) report median solve time against sample size, asset dimension, and scenario count; (d) reports feasible and optimal decision rates; (e) reports the 95th percentile feasibility residual; and (f) reports median solver iterations.  Failed attempted solves remain in the denominators of the relevant rates.}
\label{fig:computation-appendix}
\end{figure}

\FloatBarrier
\begingroup
\setlength{\emergencystretch}{1em}

\endgroup

\end{document}